\documentclass[11pt]{amsart}

\usepackage{amsmath, amssymb, amsthm, mathrsfs}
\usepackage{hyperref}
\usepackage{geometry}
\usepackage{mathtools}
\usepackage{xcolor}
\DeclarePairedDelimiter{\tnorm}{\vert\!\vert\!\vert}{\vert\!\vert\!\vert}
\newtheorem{theorem}{Theorem}
\newtheorem{proposition}{Proposition}
\newtheorem{lemma}{Lemma}
\newtheorem{corollary}{Corollary}
\theoremstyle{definition}
\newtheorem{definition}{Definition}
\theoremstyle{remark}
\newtheorem{remark}{Remark}
 \newtheorem*{ack*}{Acknowledgements}
 \newtheorem*{fund*}{Funding}
 \newtheorem*{coi*}{Conflict of interest}
\newtheorem*{ai*}{AI-use Disclosure}

\DeclareMathOperator{\sing}{sing}

\newcommand{\C}{\mathbb{C}}
\newcommand{\R}{\mathbb{R}}

\newcommand{\vol}{\mathrm{vol}}

\newcommand{\dist}{\mathrm{dist}}

\usepackage[
  backend=bibtex,
  style=numeric, % or style=authoryear, etc.
  sorting=nyt,
  maxnames=50,
  doi=false,isbn=false,url=false,eprint=false,
  giveninits=true
]{biblatex}
\title[Special Lagrangians with isolated singularities]{Special Lagrangians with multiple isolated singularities}
\author{Bryan Dimler and Filippo Gaia}
\date{\today}
\address{Department of Mathematics \\ University of Notre Dame \\ Notre Dame, IN 46556 \\ USA}
\email{bdimler@nd.edu}
\address{Department of Mathematics\\Stanford University\\Stanford, CA 94305\\USA}
\email{fgaia@stanford.edu}

\begin{document}
\begin{abstract}
We extend the Caffarelli–Hardt–Simon perturbation argument for truncated regular minimal cones to the special Lagrangian setting and prove a bridge principle for regular special Lagrangian cones in the spirit of Nathan Smale. Our bridge principle yields a general existence theorem for conically singular special Lagrangian submanifolds with prescribed regular tangent cones: for any finite list of such cones in $\mathbb C^m$ having the same Lagrangian angle and suitably arranged, there exists a connected special Lagrangian submanifold with boundary and isolated conical singularities whose tangent cones at its singularities are precisely the prescribed cones. When applied to the graphical special Lagrangian cone in $\mathbb{C}^5$ recently discovered by Bhattacharya, Orriols, and Skorobogatova, we obtain infinite families of $C^{1,1}$ viscosity solutions to the special Lagrangian equation having any finite number of isolated singularities.
\end{abstract}

\maketitle

\section{Introduction}
In this paper, we show that by gluing together any finite list of regular special Lagrangian cones with the same Lagrangian angle along flat Lagrangian strips (the \emph{$\varepsilon$-bridges}), one can produce examples of connected special Lagrangian submanifolds with boundary in $\C^m$ having any finite number of isolated conical singularities and prescribed tangent cones at their singularities.
More precisely, we prove the following general result, which may be viewed as a special Lagrangian analogue of Nathan Smale's bridge principle for minimal cones (see \cite{Sm89, Sm93}).

\begin{theorem}\label{thm: main-theorem}
    Let $p_1,\ldots,p_N\in \C^m$ be distinct points, and let $C^1,\ldots,C^N$ be truncated special Lagrangian regular cones centered at the points $p_1,\ldots,p_N$. Assume that, for any $i\in\{1,\ldots,N-1\}$, there is an affine Lagrangian plane $T^i$ in $\C^m$ such that both $C^i$ and $C^{i+1}$ are tangent to $T^i$ along a ray.
    Then there exists a connected special Lagrangian submanifold (with boundary) in $\C^m$ having $N$ isolated conical singularities modeled on the cones $C^1,\ldots,C^N$. 
\end{theorem}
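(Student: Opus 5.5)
The plan is to argue by gluing and perturbation, following Smale's bridge principle adapted to the special Lagrangian setting. The special Lagrangian condition will be encoded through Lagrangian neighborhoods. Near a Lagrangian $L$, every nearby Lagrangian is the graph of a closed $1$-form, locally $df$ for a function $f$, via a Weinstein-type neighborhood map. For such a graph, being special Lagrangian with angle $\theta$ is one scalar nonlinear elliptic equation in $f$. Its linearization at a special Lagrangian is the Laplacian $\Delta_L$ of the induced metric. This reduces the problem from the minimal surface system to a scalar equation, which I expect to be the main simplification over Smale's setting.

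First, after a unitary change of phase, all $C^i$ have the same Lagrangian angle; this is forced by the assumption that consecutive cones share a tangent Lagrangian plane. For each $i\le N-1$, the cones $C^i$ and $C^{i+1}$ both touch the plane $T^i$ along rays. I join them by a thin strip $B^i_\varepsilon\subset T^i$ of width $\varepsilon$ around a segment connecting the two tangency rays. I then smooth the corners in a Lagrangian way: near each junction, $C^i$ is a graph $dg$ over $T^i$ with $g$ vanishing to second order along the ray. I interpolate $g$ to $0$ using a cutoff at scale $\varepsilon$. This produces a connected Lagrangian $M_\varepsilon$ with boundary. It is exactly special Lagrangian away from $\varepsilon$-neighborhoods of the junctions and has conical singularities at $p_1,\dots,p_N$. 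Its Lagrangian angle error $\theta_{M_\varepsilon}-\theta$ is small in weighted Hölder norms. The bridge itself is flat, hence special Lagrangian with the common angle.

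Second, I solve $\mathcal{F}(f)=0$ for $\mathrm{graph}(df)$ over $M_\varepsilon$. I use a fixed-point argument in weighted spaces $C^{2,\alpha}_{\nu}$, with weights $r_i^{\nu}$ near each $p_i$ and an $\varepsilon$-scaled weight on the bridge. The boundary condition is a Dirichlet condition on $f$, i.e.\ the boundary moves within prescribed directions, as in the Caffarelli--Hardt--Simon perturbation of truncated cones. The rate $\nu$ lies in a window $(2,2+\gamma)$ above the homogeneity of the cone, which is $2$ for $f$ and corresponds to linear growth of the graph. The window is chosen to avoid the indicial roots of $\Delta_{C^i}$. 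This guarantees that the solution has the same tangent cone $C^i$ at $p_i$. Near the tips the linear theory is the Caffarelli--Hardt--Simon analysis of the Jacobi (here Laplace) operator on truncated regular cones. Its special Lagrangian extension is, I expect, the result established earlier in the paper. Existence with uniform bounds for $\Delta_{C^i}$ on the truncated cone with Dirichlet data follows from that analysis. Uniqueness is easy for scalar Dirichlet problems via the maximum principle, so there is no kernel obstruction. This is an advantage over the minimal case.

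The main obstacle is a uniform-in-$\varepsilon$ inverse bound for $\Delta_{M_\varepsilon}$ on the glued space. I would prove it by contradiction and blow-up, following Smale. Suppose $\|u_k\|_{C^{2,\alpha}_\nu}=1$ while $\|\Delta u_k\|\to0$ and $\varepsilon_k\to0$. Then rescale at the point where the weighted norm concentrates. Away from the bridges the limit is a solution on some $C^i$ with zero boundary data and decay rate $\nu$, so it vanishes. Near a tip the limit is a homogeneous-rate solution on the infinite cone, excluded by the choice of $\nu$. On the bridge, at scale $\varepsilon$, the limit is a harmonic function on a flat half-plane or strip, or on the $\varepsilon$-neighborhood model of the junction, which is a plane with a slit-like gluing region. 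It has Dirichlet zero boundary and sub-polynomial growth, so it vanishes by Liouville. Each case contradicts the normalization. Once this bound is in hand, the error estimate makes the nonlinear map a contraction, giving a special Lagrangian $\mathrm{graph}(df_\varepsilon)$. It is connected, has exactly $N$ singular points with tangent cones $C^1,\dots,C^N$, and has smooth boundary.
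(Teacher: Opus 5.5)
Your linear step has a genuine gap. You claim that for a rate $\nu>2$ the Dirichlet problem for $\Delta$ on the truncated cones, and then on $M_\varepsilon$, is solvable in $C^{2,\alpha}_\nu$ with arbitrary Dirichlet data. You justify this by uniqueness (maximum principle, blow-up). Uniqueness gives only injectivity and an a priori bound on the range. It does not give surjectivity, and surjectivity is false.

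Expand in eigenfunctions $\phi_j$ of $-\Delta_\Sigma$. Each coefficient solves $(r^2\partial_r^2+(m-1)r\partial_r-\mu_j)a_j=r^2f_j$. For every $j$ with indicial root $\gamma_j<\nu$, the requirement $a_j=O(r^\nu)$ excludes both $r^{\gamma_j}$ and $r^{\gamma_j^-}$. So $a_j(1)$ is dictated by $f$ and cannot be prescribed. For $\nu>2$ these modes always include:
\begin{itemize}
\item $\gamma=0$ (constants);
\item $\gamma=1$ (restrictions of the linear coordinates, harmonic because $C^i$ is minimal; these are translations of the vertex);
\item $\gamma=2$ for a non-flat cone (restrictions of the $\mathfrak{su}(m)$ moment maps; these are rotations of the cone);
\item possibly further roots in $(0,2)$, which have no geometric meaning.
\end{itemize}
For instance, the only bounded harmonic function on $C_1$ with boundary value $x_k|_\Sigma$ is $x_k$ itself. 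Its differential is $O(1)$ at the tip, so it is not in $C^{2,\alpha}_\nu$ and even leaves the conical Weinstein neighborhood of Proposition~\ref{prop:weinstein}. So the operator you want to invert has a nontrivial finite-dimensional cokernel on each cone and on each $M_\varepsilon$. Your blow-up argument cannot produce a right inverse, and the contraction has nothing to iterate. Working modulo constants removes only the $\gamma=0$ obstruction. Note also that Caffarelli--Hardt--Simon, like Theorem~\ref{thm: SL-CHS}, prescribe the boundary data only up to a finite-dimensional space $W$.

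The missing ingredient is to let finitely many boundary modes float, uniformly in $\varepsilon$, which is what the paper does. By \cite[Lemma 4.2]{Sm93}, the space $K_H^{L^\varepsilon}(-\delta,\Delta)$ of harmonic functions in $\rho^{-\delta}H^1_0\cap H^2$, with $\delta=\nu+\frac m2-1$, represents the cokernel. It has constant dimension $J(\nu)$ for small $\varepsilon$. Along a subsequence $\varepsilon_k\to 0$, orthonormal bases $w_i^k$ converge on the cones to a basis of the corresponding space for the union of the cones. This yields boundary correctors $e_i^k$ supported away from the bridges, with uniform $C^{2,\alpha}$ bounds and $\int_{\partial L^{\varepsilon_k}}e_i^k\,\partial_\nu w_j^k=\delta_{ij}$.

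One then imposes only $\Pi_\nu^\varepsilon v=\Pi_\nu^\varepsilon\psi$ and subtracts $F^\varepsilon(f(u))$ from the boundary data. This makes the right-hand side of the zero-boundary problem orthogonal to the $w_i^k$, so the uniform weighted $L^2$ estimate of \cite[Lemma 4.1]{Sm93} applies. Since Theorem~\ref{thm: main-theorem} prescribes no boundary, this freedom is harmless; it is also why the bridge principle is obtained only along a subsequence $\varepsilon_k$.

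The alternative is to absorb the low modes into translations and rotations of the cones, in the style of Joyce and Pacini. That would require a stability hypothesis excluding all other indicial roots in $(0,2]$, which the theorem does not assume.
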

\begin{remark}
The theorem applies, in particular, to the case of multiple regular special Lagrangian cones sharing a common tangent plane. We further note that the assumption implies that the cones $C^1,\ldots,C^N$ all have the same Lagrangian angle. Likewise, since any finite list of regular special Lagrangian cones having the same Lagrangian angle can be arranged in $\C^m$ to lie tangent to a common plane, Theorem \ref{thm: main-theorem} can be seen as a general existence theorem for conically singular special Lagrangians with prescribed regular tangent cones. Note also that the result remains true for slightly more general configurations of cones, and with more general bridges (see Remark \ref{rmk: more-general-examples}).\\
We also remark that if the truncated cones $C^1,..., C^N$ are disjoint, then the special Lagrangian submanifold constructed in Theorem \ref{thm: main-theorem} can be chosen to be embedded (see Remark \ref{rmk: embeddedness}).
\end{remark}

Theorem \ref{thm: main-theorem} allows one to generatively produce conically singular special Lagrangians with finitely many isolated conical singularities from regular special Lagrangian cones. While for $m\leq2$ there are no non-flat special Lagrangian cones, for $m\geq 3$ many examples of such cones are known (e.g. \cite{HL82, Haskins-SLag-cones, HK07, Haskins-Kapouleas-gluing, Oh07}). Although examples of special Lagrangians with multiple conical singularities have been constructed from a restricted admissible family of base cones \cite[Example 6.12]{Pacini-gluing}, our approach provides a flexible setting for constructing examples of singular special Lagrangians starting from any finite family of special Lagrangian cones with the same Lagrangian angle. 

For example, we can apply Theorem \ref{thm: main-theorem} to any suitably arranged finite list of the Haskins special Lagrangian $T^2$-cones in $\mathbb{C}^3$ \cite{Haskins-SLag-cones, Ha04} to produce new singular examples. Perhaps more striking is that we can join copies of the graphical special Lagrangian cone in $\mathbb{C}^5$ discovered by Bhattacharya, Orriols, and Skorobogatova \cite[Theorem 1.4]{BOS26} to obtain infinite families of graphical special Lagrangians with isolated conical singularities (see Section 4.4). Each such graph is the gradient of a $C^{1,1}$ potential function $u : \Omega \subset \mathbb{R}^5 \rightarrow \mathbb{R}$ solving the phase zero special Lagrangian equation $\arctan(D^2u(x)) = 0$ in $\Omega$ in the viscosity sense, asymptotic to the cone potential at each singularity, which is not $C^2$. Thus, our theorem can be realized as a bridge principle in special Lagrangian geometry and for viscosity solutions to the fully nonlinear special Lagrangian equation. In the corollary below, we will denote the cone potential for the graph by $u_0: \mathbb{R}^5 \rightarrow \mathbb{R}$.
\begin{corollary}\label{cor: SLEsolution}
    Fix $N \in \mathbb{N}$. Then there is a bounded smooth domain $\Omega \subset \mathbb{R}^5$ and a viscosity solution $u: \Omega \rightarrow \mathbb{R}$ to the phase zero special Lagrangian equation with smooth boundary data such that $u \in C^{1,1}(\Omega) \setminus C^2(\Omega)$ and $u$ has precisely $N$ isolated singularities in $\Omega$. At each singularity, after translation and normalization, $u - u_0 = O(r^\nu)$ for some $\nu > 2$.
\end{corollary}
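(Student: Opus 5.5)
The plan is to apply Theorem \ref{thm: main-theorem} to $N$ translated copies of the BOS cone and then use the graphical structure to build the potential $u$. Let $C_0=\{(x,\nabla u_0(x)) : x\in\R^5\}\subset\C^5$ be the graphical special Lagrangian cone of \cite[Theorem 1.4]{BOS26}, where $u_0$ is homogeneous of degree two, $C^{1,1}$, smooth away from $0$ and not $C^2$ at $0$, and $C_0$ has phase zero.

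First I would arrange the cones. Fix a unit vector $e\in\R^5$ and put $p_i=(ie,\nabla u_0(ie))$ after rescaling suitably. Take $C^i$ to be a truncation of $p_i+C_0'$, where $C_0'$ is $C_0$ modified by a fixed rotation or reflection preserving the graphical phase-zero structure. The aim is to choose these modifications so that consecutive cones are tangent along a ray to a common affine Lagrangian plane $T^i$. Since $u_0$ is $1$-homogeneous in its gradient, along the ray $\R_+ e$ the tangent plane of $C_0$ is the graph of $D^2u_0(e)$, and this is constant along the ray. So $p_i+C_0$ is tangent along the ray from $p_i$ through $p_{i+1}$ to the plane $\{(x,\nabla u_0(e)+D^2u_0(e)(x-ie))\}$, provided the points $p_i$ are collinear along that direction. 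The other cone must be tangent to the same plane along an (opposite) ray. I would get this by taking the next cone reflected via $x\mapsto -x$, using $u_0(-x)$, which is again a solution. More simply, I would arrange for the bridge to be attached along a ray with the plane matching, in the generality of Remark \ref{rmk: more-general-examples}.

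Second, Theorem \ref{thm: main-theorem} gives a connected special Lagrangian $\Sigma$ with $N$ conical singularities modeled on the $C^i$. The key point is graphicality. The construction is a small perturbation, in a weighted $C^{2,\alpha}$ sense near the singularities and in $C^{2,\alpha}$ elsewhere, of the union of the truncated cones and thin flat bridges, and every piece is graphical over $\R^5$ with uniformly bounded slope (the bridges lie in graphical planes $T^i$). Hence, provided the perturbation is small and the pieces project injectively (this is the reason for choosing the cones disjoint and well separated, cf.\ Remark \ref{rmk: embeddedness}), $\Sigma$ is the graph of a map $F:\Omega\to\R^5$ over a domain $\Omega$, whose boundary I smooth by choosing the truncation appropriately. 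A graphical Lagrangian over a simply connected region is a gradient, $F=\nabla u$, and phase zero gives $\arctan D^2u=0$ classically away from the singular points $x_i$. Since $D^2u$ is bounded (slopes are bounded), $u\in C^{1,1}(\Omega)$. The conical asymptotics $\Sigma-p_i=C^i+O(r^{\nu-1})$ in the graph norm, with rate $\nu>2$ coming from the decay in the main theorem, integrate to $u-u_0=O(r^\nu)$ after translation and normalization. Because $u_0\notin C^2$, $u$ fails to be $C^2$ at each of the $N$ points.

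Third, I would show that $u$ is a viscosity solution across the isolated points. For this I would invoke removability: a $C^{1,1}$ function solving the equation away from a finite set, whose gradient graph is a special Lagrangian current (hence calibrated and area-minimizing across the points), is a viscosity solution. This follows either from the argument in \cite{BOS26} for $u_0$, or from the fact that the Hessian is bounded, so a finite set of points cannot support a touching test function violating the inequality. Concretely, if a smooth $\varphi$ touches $u$ from above at $x_i$, then the bounded Hessian and the asymptotics $u\approx u_0$ reduce the question to $u_0$, which BOS show is a viscosity solution.

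The main obstacle I expect is the graphicality and injective projection of the glued surface, that is, ensuring the bridges and perturbations do not destroy being a graph, together with arranging the tangency hypothesis with a single cone type.
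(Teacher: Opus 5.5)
Your overall plan follows the paper's route: join copies of the BOS cone by flat strips in a graphical tangent plane, run the bridge principle, check the result is still a gradient graph, then read off $C^{1,1}$ regularity, the asymptotics and the viscosity property. Two steps, however, do not go through as written.

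\textbf{The arrangement.} You already observe the key fact: $C_0$ is tangent to the linear plane $T=T_pC_0$ along the whole ray $\R_+p$. Hence any translate $a+C_0$ with $a\in T$ is tangent to $a+T=T$ along $a+\R_+p$. You use this only for $a$ on the ray. Then, believing the two tangency rays must point at each other, you reflect every other cone by $x\mapsto -x$. This causes three problems.
\begin{itemize}
\item It is unnecessary. Theorem \ref{thm: main-theorem} only asks for tangency along a ray, and the connecting strip in $T$ need not lie on a common line of tangency; the paper's own graphical construction joins parallel rays.
\item It makes $u$ asymptotic to $u_0(-\,\cdot\,)$ rather than $u_0$ at the reflected vertices.
\item For $N\ge 3$ the alternating scheme on one line does not give the configuration you want. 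In general $p_2-C_0$ is tangent to $T$ only along the ray pointing back to $p_1$, so the pair $(2,3)$ has tangency rays pointing away from each other.
\end{itemize}
The paper instead translates by vectors in $T$ transverse to the ray: $a=rv$ with $v\in T$ a unit vector orthogonal to $p$ and $r$ large, and similarly for $N$ copies. All copies are then tangent to the same $T$ along parallel rays. Since $\pi(v)\neq 0$ ($T$ is a graph), their projections are pairwise disjoint for $r$ large. They are joined by strips in $T$, and every singularity is modeled on $C_0$ itself.

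\textbf{Graphicality.} This is the more serious point. You correctly single it out as the main obstacle, but it is only asserted. It cannot be extracted from Theorem \ref{thm: main-theorem} used as a black box, because that statement gives no control on the size of the perturbation. What is needed is the quantitative output of the fixed point in $\mathscr{K}(\varepsilon,t)$, namely $\tnorm{du^\varepsilon}_{1;\nu-1}\le\varepsilon^{2-t}$. The argument then runs as follows.
\begin{itemize}
\item Write $P^\varepsilon=\pi\circ\iota_\varepsilon=\mathrm{Id}+E^\varepsilon$ on the domain $\Omega_\varepsilon$ with $L^\varepsilon=\operatorname{graph}(\nabla q_\varepsilon)$.
\item The equivariance of the Weinstein map gives $|DE^\varepsilon|\le c(|du^\varepsilon|+|\nabla du^\varepsilon|)\le c\varepsilon^{2-t}$.
\item By the way the patching regions and strips are built, $\Omega_\varepsilon$ is quasi-convex with a constant $Q$ independent of $\varepsilon$.
\item Together these give $|P^\varepsilon(x)-P^\varepsilon(y)|\ge(1-CQ\varepsilon^{2-t})|x-y|$ and invertibility of $DP^\varepsilon$.
\end{itemize}
``Small perturbation of a graph'' is not enough by itself. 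The geometry of $\Omega_\varepsilon$, with strips of width $O(\varepsilon)$, degenerates as $\varepsilon\to 0$, so the smallness must be measured against a bi-Lipschitz constant that is uniform in $\varepsilon$.

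\textbf{Viscosity step.} This part is sound, and it is a self-contained substitute for the paper's appeal to \cite[Lemma 4.1]{BOS26}. Since $|D^j(u-u_0)|=O(r^{\nu-j})$ for $j\le 2$ with $\nu>2$, the difference $w=u-u_0$ is $C^2$ near the tip with vanishing $2$-jet. So if $\varphi$ touches $u$ at the tip, then $\varphi-w$ is an admissible test function for $u_0$ with the same Hessian there.
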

\begin{remark}
    When $m \leq 4$, all $C^{1,1}$ viscosity solutions are smooth by quadratic re-scaling and blow-up \cite{NV13}. Conversely, when $m \geq 5$ the Hausdorff dimension of the singular set is bounded above by $m - 5$, and when $m = 5$ interior singularities must be isolated \cite[Theorem 1.1]{BOS26} (see also \cite[Theorem 3.9]{Dim23}). Due to the boundary regularity theorem of Silvestre and Sirakov \cite[Theorem 1.3]{SS14}, this implies the singular set must be finite when $m = 5$. Thus, Corollary \ref{cor: SLEsolution} establishes the optimality of the regularity theory in the critical dimension: singularities may occur, and their number can be arbitrarily large.
\end{remark}
\begin{remark}
    Since the gradient graph of any $C^{1,1}$ viscosity solution to the special Lagrangian equation is an area-minimizing Lipschitz submanifold, we have also produced new singular minimizing solutions to the minimal surface system. To our knowledge, these are the first non-conical area-minimizing Lipschitz solutions to the minimal surface system with multiple prescribed isolated conical singularities.
\end{remark}

%\textcolor{blue}{I think that we might also consider making a corollary out of this. It think that it would give more relevance to this application.} - \textcolor{red}{I agree. I wanted your input first :p. I think it also demonstrates how powerful our theorem is, since a single new example leads to many, and I think it is quite pretty that both papers came out at the same time by coincidence lol. \\ I believe the framing above is set up well for subsequent work --- if we get the coassociative case we will have area-minimizing in the critical dimension. We need solutions on a ball to claim all size singular sets, since the system does not have such nice boundary regularity. It would also be interesting to produce examples with singularities accumulating at the boundary, as in one of your other papers. If we can do it, great, if not maybe we can improve the boundary regularity... just a thought!}\\
%\textcolor{blue}{maybe we can add a line to the abstract to describe this application.}

In \cite{CHS}, Caffarelli, Hardt, and Simon constructed the first known non-conical embedded minimal hypersurfaces with isolated singularities, by perturbing truncated regular minimal hypercones (see also \cite{Simon-singularities-extrema}). Shortly after, Hardt and Simon showed that for a \emph{strictly minimizing} hypercone \cite[Section 3]{HS85}, the minimizing condition can be preserved after perturbation \cite[Theorem 4.4]{HS85}. Building on \cite{CHS}, Smale produced in \cite{Sm89} the first connected minimal hypersurfaces with arbitrarily many isolated singularities (see \cite{Sm93} for the higher-codimensional case) by joining together truncated cones by ``$\varepsilon$-bridges" and showing that such configurations can be perturbed into a minimal surface. Even when the starting cones are area-minimizing, however, it is not clear whether Smale's construction yields an area-minimizing minimal submanifold \cite[Section 5]{Sm89}. Since most known area-minimizers are calibrated, it makes sense to study the problem in some calibrated settings. We therefore develop an analogue of Smale’s construction in the special Lagrangian setting after extending the Caffarelli--Hardt--Simon framework to special Lagrangian cones. As a result, the manifolds constructed in Theorem \ref{thm: main-theorem} are all area-minimizing (see Remark \ref{rmk: mean-curvature-lagrangian-angle}). One limitation of the chosen approach is that all of our examples necessarily have boundary. It would be interesting to produce examples as in Theorem \ref{thm: main-theorem} with more general boundary data, or complete/global solutions. \\

The idea of the proof of Theorem \ref{thm: main-theorem} is the following. Assume that the truncated cones $C^1,\ldots, C^N$ all have constant Lagrangian angle $\theta_0$.
First, for any $\varepsilon>0$ one constructs \emph{approximate solutions} $L^\varepsilon$, i.e. Lagrangian (but not special Lagrangian) connected submanifolds of $\C^m$ with boundary which contains the truncated cones $C^i$, and whose Lagrangian angle defect $\Theta_0^\varepsilon:=\theta_{L^\varepsilon}-\theta_0$ is small in $L^p$ (for any $p\geq 1$) and in $C^{0,\alpha}$ (see \eqref{bridgeest1} for the precise statement). Here, $\theta_{L^\varepsilon}$ represents the Lagrangian angle of $L^\varepsilon$. This can be obtained by joining the truncated cones with flat strips contained in the planes $T^i$, by a cut-and-paste argument.
Next, one notes that the Lagrangian perturbations of $L^\varepsilon$ can be identified, via a Weinstein neighborhood argument (Proposition \ref{prop:weinstein-approximate-solutions}), with sections in $T^\ast L^\varepsilon$. 
As we are interested in perturbations that remain asymptotic to the original cones, we will restrict our attention to one-forms with suitable weighted decay conditions near conical points. For a closed form $\xi$, let $L^\varepsilon_\xi$ be the perturbation of $L^\varepsilon$ induced by $\xi$, and let $\Theta(\xi)(x)$ denote the Lagrangian angle of $L^\varepsilon_\xi$ (at the point corresponding to $x$).
We will show that for any $u\in C^{2,\alpha}_{\text{loc}}(L^\varepsilon)$,
\begin{align*}
    \Theta(du)=\theta_{L^\varepsilon}+d^\ast du+R(u)=\theta_{L^\varepsilon}+\Delta_{L^\varepsilon}u+R(u).
\end{align*}
Therefore, any solution $u$ of
\begin{align}\label{eq: PDE-for-u-intro}
    \Delta_{L^\varepsilon}u=-\Theta_{0}^\varepsilon-R(u)
\end{align}
decaying sufficiently fast at the conical points induces a Lagrangian submanifold $L^\varepsilon_{du}$ satisfying the conclusion of Theorem \ref{thm: main-theorem}.
To find solutions of \eqref{eq: PDE-for-u-intro} we argue as follows: for $\psi\in C^{2,\alpha}(\partial L^\varepsilon)$ supported away from the bridges and for $u\in C^{2,\alpha}_{\text{loc}}(L^\varepsilon)$ in an appropriate space (encoding the decay properties around the singularities), let $\mathcal{P}(u)$ be the unique solution $v$ of
\begin{align*}
\begin{cases}
    \Delta_{L^\varepsilon} v=-\Theta_{0}^\varepsilon-R(u) & \text{in } L^\varepsilon,\\
    \Pi_\nu^\varepsilon v=\Pi_\nu^\varepsilon\psi & \text{on } \partial L^\varepsilon,
\end{cases}
\end{align*}
where $\Pi_\nu^\varepsilon$ is the projection onto the $L^2$-orthogonal complement of finitely many elements in $L^2(\partial L^\varepsilon)$ (see \eqref{eq: epsilon-projection} for the precise definition).
We will then show that for sufficiently small boundary data and for $\varepsilon$ sufficiently small, one can find a fixed point of the operator $\mathcal{P}$, i.e. a solution of \eqref{eq: PDE-for-u-intro}, by an application of Schauder's fixed point theorem. \\

The strategy outlined above is based on the works of Smale \cite{Sm89, Sm93}. There are, however, a few notable differences.
Smale also starts by constructing approximate solutions $M^\varepsilon$, i.e. unions of minimal cones joined by small strips (i.e. the $\varepsilon$-bridges), whose mean curvature $H_{M^\varepsilon}$ is small in $L^p$ (for $p\geq 1$). He then identifies perturbations of $M^\varepsilon$ with sections of the normal bundle of $M^\varepsilon$ in appropriate function spaces. Setting $H(u)$ for the mean curvature of the perturbation induced by a section $u$, he shows that
\begin{align*}
    H(u)=H_{M^\varepsilon}+L_{M^\varepsilon}(u)+Q(u),
\end{align*}
where $L_{M^\varepsilon}$ is the Jacobi operator of $M^\varepsilon$ and $Q$ is a non-linear remainder term, which satisfies good estimates in terms of $u$ and its derivatives \cite[see Proposition 3.3]{Sm93}. Then minimal perturbations of $M^\varepsilon$ correspond to sections $u$ such that
\begin{align}\label{eq: PDE-minimal}
    L_{M^\varepsilon}(u)=-H_{M^\varepsilon}-Q(u),
\end{align}
and following ideas from \cite{CHS}, Smale shows that when $\varepsilon>0$ is small enough, one can obtain solutions of \eqref{eq: PDE-minimal} combining the Fredholm theory for conical operators with  a fixed point argument.

A first distinction is that, while the linearized operator $L_{M^\varepsilon}$ (the Jacobi operator of $M^\varepsilon$) in Smale's approach is a second order operator on the normal sections, the linearization of the Lagrangian angle is a first order operator on the space of closed one-forms, which represent the allowed perturbations in our setting.
\begin{remark}
    We will see (Lemma \ref{lem: decaying-closed-forms-are-exact} and Remark \ref{rmk: decaying-closed-forms-are-exact}) that 
    closed forms on $L^\varepsilon$ vanishing at the tip are exact. Here, we will work with exact 1-forms for which the primitive function decays sufficiently fast at the cone tips. This will allow us to work with scalar functions (thought of as primitives of one-forms) and to apply directly the available Fredholm theory for conical operators of second order (see \cite{Mazzeo-Edge,LockhartMcOwen, Sm93}, for a modern exposition see \cite[Section 2]{FMM}).
\end{remark} 
A second distinction is that, in our setting, the control of the non-linear remainder $R(u)$ in terms of the second derivatives of $u$ is not linear as for $Q(u)$ in \cite[Proposition 3.3]{Sm93}, but superlinear; therefore we cannot apply directly the same strategy (in particular the estimates on page 39 therein). To circumvent this issue, we work with flat Lagrangian bridges inspired by those in \cite{Dim25}. This allows us to obtain smallness of the Lagrangian angle of the approximate solution not only in $L^p$, but also in $C^{0,\alpha}$. The $C^{0,\alpha}$-control allows us to apply Schauder estimates to control $R(u)$ in the desired norms. The downside of this approach is that our bridges are not as flexible as those in \cite{Sm93}, and in general do not allow us to connect two arbitrary cones with the same Lagrangian angle. \\

As a first step, we tried to understand the deformation theory of a single truncated Lagrangian cone. In this setting, we obtained a special Lagrangian analogue of the perturbation result for minimal cones of Caffarelli, Hardt, and Simon \cite{CHS}.
We include the result and its proof as we believe that it provides a good illustration of some of the main ideas of the argument of Theorem \ref{thm: main-theorem} in a simpler setting, and that the result itself may be of independent interest.
\begin{theorem}\label{thm: CHS-intro}
    Let $C$ be a truncated regular special Lagrangian cone in $\C^m$ centered at the origin, with link $\Sigma$. There exists a
finite-dimensional linear subspace
$W\subset C^{2,\alpha}(\Sigma)$
such that, for every $\psi\in C^{2,\alpha}(\Sigma)$ sufficiently small,
there exist a function
$u\in C^{2,\alpha}_{\mathrm{loc}}(C)$
and an element $w\in W$ satisfying
$u|_\Sigma=\psi+w$. Moreover, the Lagrangian submanifold $L_{du}$ induced by $du$ through a Weinstein neighborhood map is special Lagrangian and has an isolated conical singularity at the origin modeled on $C$. 
\end{theorem}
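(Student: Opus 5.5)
We follow the Caffarelli--Hardt--Simon scheme, adapted to the special Lagrangian setting as outlined above for Theorem \ref{thm: main-theorem}, but with no bridges. Here the approximate solution is the cone itself, so $\Theta_0=\theta_C-\theta_0\equiv 0$.

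\textbf{Step 1: Weinstein neighborhood and expansion of the angle.} Fix a Weinstein neighborhood of $C\setminus\{0\}$ that is dilation-equivariant: use the cone structure, so that the map $\Phi:T^\ast C\supset U\to\C^m$ commutes with the scalings $r\mapsto\lambda r$ on $C$ and $\xi\mapsto\lambda^2\xi$ on forms. For $u\in C^{2,\alpha}_{\mathrm{loc}}(C)$ with $|du|/r$ small, the graph $L_{du}=\Phi(\mathrm{graph}\,du)$ is Lagrangian. Its angle satisfies
\[
\Theta(du)=\theta_0+\Delta_C u+R(u).
\]
By scaling, $R$ obeys the pointwise estimate $|R(u)|\lesssim r^{-2}\,|r^{-1}du,\nabla^2u|^2$. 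It also obeys the corresponding weighted H\"older bounds: these follow by rescaling each dyadic annulus $\{r\sim 2^{-k}\}$ to unit size, where $R$ is a smooth function of $(x,du,\nabla^2 u)$ vanishing quadratically.

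\textbf{Step 2: Linear theory on weighted spaces.} Work in $C^{2,\alpha}_\nu(C)$, with norm $\sup_k 2^{k\nu}\|u\|_{C^{2,\alpha}}$ measured on rescaled annuli, where $\nu>2$ is chosen non-indicial for $\Delta_C$. The indicial roots are $\gamma_j^\pm=-\tfrac{m-2}{2}\pm\sqrt{(\tfrac{m-2}{2})^2+\mu_j}$, where $\mu_j$ are the eigenvalues of $\Delta_\Sigma$. Decay $\nu>2$ forces $L_{du}$ to be asymptotic to $C$, since $|du|=o(r)$ and $|\nabla du|\to0$. Write $\Delta_C u=f$ in separated variables, $u=\sum_j u_j(r)\phi_j$, with $f\in C^{0,\alpha}_{\nu-2}$. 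Each mode is solved by the ODE solution decaying like $r^{\nu}$. The solutions $r^{\gamma_j^+}$ with $\gamma_j^+<\nu$ obstruct imposing the full boundary datum on $\Sigma=\{r=1\}$. Let $W$ be the span of the finitely many eigenfunctions $\phi_j$ with $\gamma_j^+\le\nu$; this set is finite since $\gamma_j^+\to\infty$. Then, as in \cite{CHS} and \cite[Section 2]{FMM}, for every $f\in C^{0,\alpha}_{\nu-2}$ and $\psi\in C^{2,\alpha}(\Sigma)$ there is a unique $v\in C^{2,\alpha}_\nu$ with $\Delta_C v=f$ and $\Pi^\perp_W(v|_\Sigma)=\Pi^\perp_W\psi$. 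Moreover $\|v\|_{C^{2,\alpha}_\nu}\le K(\|f\|_{C^{0,\alpha}_{\nu-2}}+\|\psi\|_{C^{2,\alpha}})$. The low modes are handled by ODE integration from $0$, which is possible because they must decay like $r^\nu$; the high modes are handled by the Dirichlet problem together with Schauder estimates on annuli.

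\textbf{Step 3: Fixed point.} Define $\mathcal P(u)$ as the solution $v$ above with $f=-R(u)$. By Step 1,
\[
\|R(u)\|_{C^{0,\alpha}_{\nu-2}}\lesssim \sup_k 2^{k(\nu-2)}\,2^{-2k(\nu-2)}\|u\|^2_{C^{2,\alpha}_\nu},
\]
which is at most $\|u\|_{C^{2,\alpha}_\nu}^2$ because $\nu>2$. Hence $\mathcal P$ maps the ball of radius $2K\|\psi\|$ into itself and is a contraction there when $\|\psi\|$ is small. So Banach's theorem applies; alternatively, Schauder's theorem applies via compact embedding into a slightly weaker weight. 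The fixed point $u$ satisfies $\Theta(du)=\theta_0$ and $u|_\Sigma-\psi\in W$, so $w:=u|_\Sigma-\psi$ is the required element. The graph $L_{du}$ is then special Lagrangian, and its decay gives the isolated conical singularity modeled on $C$.

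\textbf{Main obstacle.} The delicate step is Step 1: building a scale-equivariant Weinstein map and proving the quadratic weighted estimate for $R(u)$ uniformly up to the tip, including the H\"older seminorms. The isomorphism statement in Step 2 needs care only in choosing $\nu$ away from indicial roots.
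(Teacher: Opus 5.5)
Your proposal is correct and follows essentially the same route as the paper. The paper also uses a dilation-equivariant Weinstein map, linearization $\Delta_{g_C}$, quadratic control of the remainder by rescaling to a fixed annulus (Lemma \ref{lem:R-rescaling}, Propositions \ref{prop: bound-Rf} and \ref{prop: contraction-estimate}), the Caffarelli--Hardt--Simon linear theory with the finitely many modes $\gamma_j<\nu$ projected off, and the contraction mapping principle on a small ball in $C^{2,\alpha,\nu}$ with $\nu>2$. Two small points: the pointwise bound in Step 1 should read $|R(u)|\lesssim r^{-2}|du|^2+|\nabla du|^2$, without the extra factor $r^{-2}$ (Step 3 already uses the correct scaling), and the contraction step needs the difference estimate for $R(u)-R(w)$, which follows from the same rescaling as in Proposition \ref{prop: contraction-estimate}.
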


The existence and deformation theory of singular special Lagrangians have been extensively studied, especially in order to get a better understanding of the moduli space (in particular, its boundary) of a compact special Lagrangian submanifold in a Calabi-Yau ambient. In the smooth compact case, McLean \cite{McLean} showed that the moduli space is unobstructed and has tangent space canonically identified with the space of harmonic $1$-forms on $L$; in particular, its dimension is $b_1(L)$.

In a series of works \cite{joyce-1, joyce-2, Joyce-CS-III, Joyce-CS-IV,Joyce-CS-V}, Joyce studied the deformation theory of compact special Lagrangians with singularities modeled on cones. Under this framework, he studied the deformations of special Lagrangian cones. Theorem \ref{thm: CHS-intro} above is closely related to \cite[Section 6]{joyce-2}.
Joyce also showed that, under suitable assumptions, singularities can be ``smoothed out" by a gluing procedure. Pacini \cite{Pacini-moduli-spaces, Pacini-gluing, Pacini-Uniform} later developed the deformation theory for special Lagrangian conifolds (i.e. manifolds which can have singularities or ends asymptotic to special Lagrangian cones) and studied gluing operations, by which he constructed several examples of conifolds. In particular, he showed in \cite[Theorem 6.10, Example 6.12]{Pacini-gluing} that, under suitable assumptions, the union of stable\footnote{In this context, stability should be interpreted in the sense of Joyce (e.g. \cite[Definition 3.6]{Joyce-CS-V}) as opposed to the variational notion of stability appearing in the minimal surface literature.} special Lagrangian cones and planes satisfying Lawlor's angle condition can be glued together using Lawlor's necks into a special Lagrangian manifold with multiple singularities. However, stability has only been verified for a limited number of explicit examples (e.g.  \cite{Joyce-CS-V, Ha04, Oh07}). 

As observed before, non-flat Lagrangian cones exist in complex dimension $m\geq 3$. When the ambient is a K\"ahler manifold of complex dimension $2$, one could consider similar questions for Hamiltonian stationary surfaces. Such surfaces have isolated conical singularities, and the tangent cones are classified \cite{Schoen-Wolfson, Pigati-Riviere}. A result similar to Theorem \ref{thm: main-theorem} in this setting was obtained in \cite{GOR}. Recently \cite{Dim25}, Smale's method was used to produce families of four dimensional variationally stable (possibly non-minimizing) graphical examples from copies of the Lawson-Osserman cone\footnote{See \cite{Dim25} and the references therein for a discussion of the Lawson-Osserman cone.}, demonstrating that the bound on the Hausdorff dimension of the singular set of a Lipschitz stationary solution to the minimal surface system (e.g. \cite[Theorem 3.7]{Dim23}) cannot, in general, be improved. The most important step in the construction is a flattening procedure at the boundary of a truncated regular minimal cone, allowing one to solve the fixed point problem in low dimensions while preserving graphicality. Since we use a modification of this technique to construct our Lagrangian approximate solutions, it is of fundamental importance to the present paper as well.\\ 

Another noteworthy point is that the bridge principle does not require perturbation of the ambient metric, allowing us to produce singular examples in $\mathbb{C}^m$. This feature was also exploited in \cite{Dim25} to construct singular minimal graphs. In comparison, Simon constructed variationally stable minimal hypersurfaces with arbitrary closed singular sets having Hausdorff dimension $\leq m-7$ \cite{Sim23}. Shortly after, Liu constructed area-minimizing examples (calibrated, in fact) with high-codimension and singular set with any Hausdorff dimension $\leq m-2$, thus resolving a conjecture of Almgren \cite{Liu25}. Although \cite{Sim23, Liu25} allow for significantly more flexibility than Smale's bridge principle in prescribing the singular set, they both rely on perturbing the ambient metric to a smooth metric.\\ 

Finally, it is worth mentioning that Brian White developed an alternate approach to proving bridge principles using geometric measure theory that is quite broad \cite{Wh941, Wh942}, allowing one to bridge locally area-minimizing submanifolds without restrictions on the singular set --- provided they uniquely solve the
Plateau problem for their boundary data in some open set. However, one cannot guarantee singularity preservation. We refer the reader to \cite[Section 1 \& Section 6]{Dim25} for a thorough comparison of Smale's and White's approaches. For both early and modern accounts of the history of the bridge principle, see \cite{Wh941} and \cite{Dim25}, respectively.

\begin{ack*} 
We would like to thank Rafe Mazzeo and Rick Schoen for their interest in the problem and helpful discussions, and Arunima Bhattacharya and Tommaso Pacini for helpful comments. 
\end{ack*}

\begin{fund*}
    B.D. was partially supported by Connor Mooney's NSF CAREER Grant DMS-2143668, as well as NSF RTG Grant DMS-2342135. F. G. was supported by the Swiss National Science Foundation (SNSF)
through the Postdoc.Mobility grant, project number 230344.
\end{fund*}

\begin{ai*}
    The authors used ChatGPT 5.6 Sol as an auxiliary tool for literature searches, computational checks, and editorial assistance. This manuscript contains no AI generated text, and the authors take full responsibility for its contents.
\end{ai*}

\section{Preliminaries}
We prove some basic facts about Lagrangian and special Lagrangian submanifolds, with an emphasis on cones. We also define conically singular Lagrangian submanifolds, and define the weighted H\"older and Sobolev spaces we will work in.
\subsection{Lagrangian cones}
\begin{definition}
    Let $(M^{2m},\omega)$ be a \emph{symplectic manifold}, i.e. a $2m$-dimensional manifold $M^{2m}$ endowed with a closed, non-degenerate $2$-form $\omega$, called a \emph{symplectic form}. A submanifold $L^m$ of dimension $m$ is called \emph{Lagrangian} if $\omega\vert_L=0$.
\end{definition}
In this paper, we will work with two different symplectic manifolds: $M=\C^m$ with the symplectic form $\omega=\Sigma_{i=1}^m dx_i\wedge dy_i$ and $M=T^\ast L$, for a manifold $L^m$, with the canonical symplectic form $\omega_{\text{can}}=\sum_{i=1}^m dx_i\wedge dp_i$. Here, $x_1,...,x_m$ are local coordinates on $L^m$ and $x_1,...,x_m,p_1,...,p_m$ are the induced local coordinates on $T^\ast L^m$.\\
If $C\subset \mathbb C^m$ is a cone with vertex at the origin, we write
\begin{align*}
C^\ast:=C\setminus{0}, \qquad \Sigma:=C\cap \mathbb{S}^{2m-1}.
\end{align*}
The submanifold $\Sigma$ is called the \emph{link} for $C$. Throughout the paper, all cones are assumed to be regular, meaning that their links $\Sigma$ are smooth embedded submanifolds of $\mathbb{S}^{2m-1}$. Also, by an abuse of notation, we will sometimes write $C$ for $C^\ast$. We will sometimes write $C(\Sigma)$ to emphasize that $C^\ast$ is determined by $\Sigma$ via radial dilation.

For $r>0$, set 
\[
C_{r}:=C^*\cap\{0<|z|<r\}.
\]
Let $g_C$ be the cone metric induced from $\C^m$ on $C^\ast$, and for $\lambda>0$, let $\delta_\lambda:\mathbb C^m\to \mathbb C^m$ denote the dilation $\delta_\lambda(z)=\lambda z$. Then
\[
\delta_\lambda^\ast g_C=\lambda^2 g_C.
\]
We denote by $\nabla$ the Levi--Civita connection of $g_C$ on $C^\ast$. Define $\tilde\delta_\lambda:T^\ast C^\ast\to T^\ast C^\ast$ by
\[
\tilde\delta_\lambda(x,\xi):=\bigl(\lambda x,\ \lambda^2(d\delta_\lambda^{-1})^\ast\xi\bigr).
\]
Direct computation gives $\tilde\delta_\lambda^\ast\omega_{\mathrm{can}}=\lambda^2\omega_{\mathrm{can}}$.

Let $|\xi|_{g_C}$ denote the norm of $\xi\in T_x^\ast C^\ast$ induced by $g_C$. In order to describe the Lagrangian deformations of a truncated cone $C_1$, it will be useful to identify a neighborhood of $C_1$ with a region of its cotangent bundle, as described in the following result.
\begin{proposition}\label{prop:weinstein}
There exist $\varepsilon_0>0$, a conical open neighborhood (i.e. dilation invariant) 
\[
\mathcal{U}_{\varepsilon_0} := \{(x,\xi)\in T^\ast C_{1}:\ |\xi|_{g_C}<\varepsilon_0 \lvert x\rvert\},
\]
a conical open neighborhood $U\subset \C^m\setminus\{0\}$ of $C_{1}$, and a symplectomorphism
$\Psi:(\mathcal{U}_{\varepsilon_0},\omega_{\mathrm{can}})\to (U,\omega)$
such that:
\begin{enumerate}
\item $\Psi(x,0)=x$ for all $x\in C_{r}$;
\item $\Psi\circ \tilde\delta_\lambda = \delta_\lambda\circ \Psi$ for all $\lambda>0$;
\item $\Psi^\ast\omega=\omega_{\mathrm{can}}$ on $\mathcal{U}_\varepsilon$.
\end{enumerate}
\end{proposition}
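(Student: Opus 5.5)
Our plan is to build $\Psi$ explicitly from the geometry of the cone and then correct it to an exact symplectomorphism by an equivariant Moser argument. Because everything is dilation-equivariant, the analysis reduces to a neighborhood of the compact link $\Sigma$, and the standard compact Weinstein theorem applies there.

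\textbf{Initial map.} Let $J$ be the complex structure on $\C^m$. Since $C^\ast$ is Lagrangian, $J$ maps $T_xC^\ast$ isomorphically onto the normal space $N_xC^\ast$. Define
\[
\Phi(x,\xi):=x+J\xi^\sharp,
\]
where $\xi^\sharp\in T_xC^\ast$ is the $g_C$-dual of $\xi$.

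This map is equivariant. Indeed,
\[
\tilde\delta_\lambda(x,\xi)=\bigl(\lambda x,\lambda^2(d\delta_\lambda^{-1})^\ast\xi\bigr),
\]
and the sharp with respect to $g_C$ at $\lambda x$ of $\lambda^2(d\delta_\lambda^{-1})^\ast\xi$ is $d\delta_\lambda(\xi^\sharp)$. Since $d\delta_\lambda$ is multiplication by $\lambda$ on vectors (the cone is identified with a subset of $\C^m$), we get $\Phi\circ\tilde\delta_\lambda=\delta_\lambda\circ\Phi$.

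$\Phi$ is also a local diffeomorphism along the zero section, with differential $(v,\eta)\mapsto v+J\eta^\sharp$. A direct computation shows $\Phi^\ast\omega=\omega_{\mathrm{can}}$ on the zero section, with $\omega(v,Jw)=g(v,w)$ matching $\omega_{\mathrm{can}}$. Restricting to $|x|\in[1/2,2]$, which is compact up to $\Sigma$, the tubular neighborhood theorem gives injectivity on $\{|\xi|<\varepsilon_1|x|\}$ there. Scaling equivariance then propagates injectivity and the diffeomorphism property to all of $\{|\xi|_{g_C}<\varepsilon_1|x|\}$, since that set is $\tilde\delta_\lambda$-invariant: $|\cdot|_{g_C}$ scales by $\lambda$ under $\tilde\delta_\lambda$.

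\textbf{Moser correction.} Set $\omega_1=\Phi^\ast\omega$ and $\omega_0=\omega_{\mathrm{can}}$. Both are closed, agree on the zero section, and satisfy $\tilde\delta_\lambda^\ast\omega_i=\lambda^2\omega_i$.

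Take the fiberwise homotopy operator from the radial retraction $\rho_t(x,\xi)=(x,t\xi)$. It commutes with $\tilde\delta_\lambda$. It yields a one-form
\[
\sigma=\int_0^1\rho_t^\ast\bigl(\iota_{\partial_t\rho_t}(\omega_1-\omega_0)\bigr)\,dt
\]
with $d\sigma=\omega_1-\omega_0$ and $\sigma=0$ on the zero section. Moreover $\sigma$ vanishes to second order there and has the same degree-2 homogeneity, $\tilde\delta_\lambda^\ast\sigma=\lambda^2\sigma$.

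Let $\omega_s=\omega_0+s(\omega_1-\omega_0)$. This form is nondegenerate on a smaller conical set $\{|\xi|<\varepsilon_2|x|\}$: by homogeneity it suffices to check near $|x|=1$, which is compact. Define $X_s$ by $\iota_{X_s}\omega_s=-\sigma$. Since $\omega_s$ and $\sigma$ have the same weight, $X_s$ is $\tilde\delta_\lambda$-invariant: $(\tilde\delta_\lambda)_\ast X_s=X_s$. It vanishes on the zero section.

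Its flow $\phi_s$ exists up to time $1$ on $\{|\xi|<\varepsilon_0|x|\}$ for $\varepsilon_0\le\varepsilon_2$ small. This follows from compactness at $|x|=1$ together with invariance. The quadratic vanishing of $X_s$ keeps orbits starting near the zero section inside the conical region, and invariance gives uniformity in $|x|$.

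Then $\phi_1^\ast\omega_1=\omega_0$, and $\phi_s$ commutes with $\tilde\delta_\lambda$ and fixes the zero section. Setting $\Psi=\Phi\circ\phi_1$ and $U=\Psi(\mathcal U_{\varepsilon_0})$ gives properties (1)–(3). $U$ is conical by (2) and open because $\Psi$ is a local diffeomorphism.

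\textbf{Main obstacle.} The delicate step is uniformity near the vertex: making sure that nondegeneracy, injectivity, and the existence time of the Moser flow do not degenerate as $|x|\to0$ or $|x|\to\infty$. I would handle every estimate on the compact slab $\{1/2\le|x|\le2\}$ and transport it by $\tilde\delta_\lambda$. This is exactly why the neighborhood is taken of the scale-invariant form $|\xi|_{g_C}<\varepsilon_0|x|$, which is preserved by $\tilde\delta_\lambda$.
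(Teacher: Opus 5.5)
Your proposal is correct and follows essentially the same route as the paper, which obtains this statement from the conical Weinstein neighborhood theorem in the appendix (Proposition \ref{prop:weinstein-approximate-solutions}). That proof takes the initial map $F(x,\xi)=x+J\xi^\sharp$, made injective by the tubular neighborhood theorem on an annulus plus dilation equivariance, then runs an equivariant Moser argument with the fiberwise homotopy operator $\int_0^1 s_\tau^\ast(\iota_R\beta)\,\frac{d\tau}{\tau}$ (your $\sigma$) and a scale-invariant Gronwall bound on $q=|\xi|/\rho$ to obtain the flow up to time $1$, and sets $\Psi=F\circ\varphi_1$; you control the flow through the quadratic vanishing of $X_s$ instead of linear vanishing plus Gronwall, which is an inessential difference.
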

This result corresponds to \cite[Theorem 4.3]{joyce-1}, and can be deduced from Proposition \ref{prop:weinstein-approximate-solutions}.

In particular, $L\subset \mathcal{U}_{\varepsilon_0}$ is a Lagrangian submanifold if and only if $\Psi(L)\subset U$ is Lagrangian.
Note that a section $s\in \mathcal{U}_{\varepsilon_0}\subset T^\ast C_1$ is Lagrangian if and only if it is closed as a $1$-form on $C_1$. Therefore the Lagrangian deformations of $C_1$ (staying sufficiently close to $C_1$) are in one-to-one correspondence with the closed $1$-forms in $\mathcal{U}_{\varepsilon_0}$.
The next lemma shows that all such $1$-forms are actually exact, since sections in $\mathcal{U}_{\varepsilon_0}$ decay sufficiently fast.
\begin{lemma}\label{lem: decaying-closed-forms-are-exact}
    Let $\xi\in \Omega^1(C_1)$ with $d\xi=0$ and
    \begin{align*}
        \lvert \xi_x\rvert\leq c\lvert x\rvert^{-1+\varepsilon},
    \end{align*}
    for some constants $\varepsilon, c>0$. Then $\xi$ is exact.
\end{lemma}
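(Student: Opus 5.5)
Since $C_1\cong(0,1)\times\Sigma$ deformation retracts onto the link $\Sigma_r:=\{r\}\times\Sigma$ for any $r\in(0,1)$, a closed form $\xi$ on $C_1$ is exact if and only if $\int_\gamma\xi=0$ for every closed loop $\gamma$ in $C_1$. Moreover, this integral depends only on the homotopy class of $\gamma$. The plan is to represent each class by a loop lying at small radius and show that its integral vanishes in the limit.

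Fix a closed loop $\gamma:[0,1]\to\Sigma\times\{1/2\}$; every class in $\pi_1(C_1)$ is represented this way. Its image is compact, so it has finite $g_C$-length $\ell_0$, measured inside the slice at radius $1/2$. For $\lambda\in(0,1]$, consider the dilated loop $\gamma_\lambda:=\delta_\lambda\circ\gamma$, which lies in the slice $\{|x|=\lambda/2\}$. Since $\delta_\lambda^*g_C=\lambda^2g_C$, this loop has length $\lambda\ell_0$. The family $(\gamma_\lambda)_{\lambda\in(0,1]}$ is a free homotopy of loops inside $C_1$. Because $\xi$ is closed, Stokes' theorem on the annulus swept out by the homotopy gives
\[
\int_{\gamma}\xi=\int_{\gamma_\lambda}\xi \quad\text{for all }\lambda\in(0,1].
\]

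Now estimate using the decay hypothesis:
\[
\Bigl|\int_{\gamma_\lambda}\xi\Bigr|\le \sup_{|x|=\lambda/2}|\xi_x|\cdot \lambda\ell_0\le c\,(\lambda/2)^{-1+\varepsilon}\lambda\ell_0 = c'\lambda^{\varepsilon}\longrightarrow 0
\]
as $\lambda\to0$. Hence every period of $\xi$ vanishes. Then $f(x):=\int_{x_0}^x\xi$, taken along any path in $C_1$, is well defined and satisfies $df=\xi$, so $\xi$ is exact.

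No step is a real obstacle. The only points needing care are that every homotopy class of loops in $C_1$ can be pushed into a single radial slice, which follows from the product structure, and that the length scales linearly under dilation. The exponent $-1+\varepsilon$ is exactly what makes the bound (a sup of order $\lambda^{-1+\varepsilon}$ times a length of order $\lambda$) tend to zero. The borderline case $|\xi|\sim|x|^{-1}$, such as the angular form $d\theta$ on a cone over a circle, shows that the hypothesis $\varepsilon>0$ is sharp.
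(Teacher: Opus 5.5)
Your proof is correct and follows essentially the same route as the paper: dilate the loop toward the vertex, use closedness of $\xi$ to keep the period unchanged, and bound the period by a length of order $\lambda$ times $\sup|\xi|$ of order $\lambda^{-1+\varepsilon}$. Your preliminary step of pushing the loop into the slice $\{1/2\}\times\Sigma$ is harmless but unnecessary; the paper dilates an arbitrary loop directly and uses $\dist(\gamma,0)>0$ in the estimate.
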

\begin{proof}
    Let $\gamma$ be any closed loop in $C_1$ (recall that $C_1$ does not include the tip). Note that $[\gamma]=[r\gamma]\in H_1(C_1,\mathbb{Z})$, for any $r\in (0,1)$. Therefore, since $d\xi=0$,
    \begin{align*}
        \left\lvert\int_\gamma \xi\right\rvert=\left\lvert\int_{r\gamma}\xi\right\rvert\leq CL(\gamma)r (\dist(\gamma, 0) r)^{-1+\varepsilon}.
    \end{align*}
    As the right hand side tends to zero as $r\to0$, we conclude that $\int_\gamma \xi=0$. Since this holds for any loop $\gamma$ in $C_1$, we conclude that $\xi$ is exact.
\end{proof}

\begin{remark}\label{rmk: decaying-closed-forms-are-exact}
    The result remains true for closed $1$-forms on $L^\varepsilon$, where $L^\varepsilon$ consists of a union of Lagrangian cones, joined by bridges, under the assumption that the bridges do not form any closed loop.
\end{remark}

\subsection{The Lagrangian angle}

Let $\Omega = dz_1\wedge\cdots\wedge dz_m$ denote the standard holomorphic volume form in $\C^m$.
\begin{definition}
    If $L\subset\C^m$ is a Lagrangian submanifold and $\iota$ denotes the inclusion of $L$ in $\C^m$, the \emph{Lagrangian angle} $\theta: L\to  \R/2\pi\mathbb{Z}$ of $L$ is determined by
    \begin{align*}
        \iota^\ast\Omega=e^{i\theta}\lvert\iota^\ast\Omega\rvert.
    \end{align*}
    A Lagrangian submanifold is called \emph{special Lagrangian} if its Lagrangian angle $\theta$ is constant.
\end{definition}
\begin{remark}\label{rmk: mean-curvature-lagrangian-angle}
    For a Lagrangian submanifold $L\subset \C^m$ with Lagrangian angle $\theta$, the mean curvature vector $H$ of $L$ satisfies
    \begin{align*}
        H=J\nabla \theta,
    \end{align*}
    where $J$ denotes the complex multiplication in $J$; see for instance Lemma 2.1 in \cite{Thomas-Yau}. In particular, a special Lagrangian submanifold $L\subset\C^m$ is minimal. In fact, if such a manifold has Lagrangian angle $\theta_0$, then it is calibrated by $\Re(e^{-i\theta_0}\Omega)$, and therefore it is area-minimizing.
\end{remark}
Let $C$ be a regular Lagrangian cone (identified with $C^*$) with Lagrangian angle $\theta_0$ and choose $\varepsilon_0$ as in Proposition \ref{prop:weinstein}. For a $1$-form $\xi \in T^\ast C_1$ such that $|\xi|_{g_C}<\varepsilon_0 r$, let
\[
\Gamma_\xi:=\{(x,\xi_x):x\in C_{1}\}\subset \mathcal{U}_{\varepsilon_0},
\quad
L_\xi:=\Psi(\Gamma_\xi)\subset \C^m,
\]
and define
\begin{align*}
    \iota_\xi:C_{1}\to L_\xi,\quad \iota_\xi(x):=\Psi(x,\xi_x).
\end{align*} 
For any $f\in C^{2,\alpha}(C_{1})$, define the \emph{pulled-back Lagrangian angle} $\theta(f):C_{1}\to\R/2\pi\mathbb{Z}$ by
\[
\iota_{df}^\ast \Omega = e^{i\theta(f)}\,\big|\iota_{df}^\ast\Omega\big|.
\]
When $df=0$, this recovers $\theta(0)=\theta_0$. 

Consider the variation
\begin{align*}
    F_t: C_{1}\to\mathbb{C}^m,\quad x\mapsto \iota_{tdf}(x).
\end{align*}
Note that for any $x\in C_{1}$,
\begin{align}\label{eq: hamiltonian-variation}
    \frac{d}{dt}\bigg\vert_{t=0}F_t(x)=\frac{d}{dt}\bigg\vert_{t=0}\Psi(x, tdf(x))=D\Psi(x,0)[0,df(x)]=J\nabla f(x).
\end{align}
For the last step, we used the fact that by Properties 1. and 3. in Proposition \ref{prop:weinstein}, for any vector field $X$ tangent to $C_1$ we have
\begin{align*}
    D\Psi(x,0)[0, df(x)]\cdot JX=-\omega(D\Psi(x,0)[0,df(x)], X)=-\omega_{\text{can}}(df(x), X)=\nabla f(x)\cdot X
\end{align*}
and similarly
\begin{align*}
    D\Psi(x,0)[0, df(x)]\cdot D\Psi(x,0)[0, dg(x)]=0
\end{align*}
for any $g\in C^1(C_1)$.

We can obtain an explicit expression for the pulled-back Lagrangian angle as follows.
Let
\(e_1,\ldots,e_m\) be a local oriented \(g_C\)-orthonormal frame on \(C\). If
\(f\) is sufficiently small in \(C^2\), then
\[
    d\iota_{df}(e_i)
    =
    D_x\Psi(x,df_x)[e_i]
    +
    D_v\Psi(x,df_x)[\nabla_{e_i}df],
\]
where \(D_x\Psi\) and \(D_v\Psi\) denote the horizontal and vertical
differentials of \(\Psi\). Therefore
\[
    \iota_{df}^*\Omega(e_1,\ldots,e_m)
    =
    {\det}_{\C}\Big[
    D_x\Psi(x,df_x)[e_i]
    +
    D_v\Psi(x,df_x)[\nabla_{e_i}df]
    \Big]_{i=1}^m .
\]
Thus, after fixing the branch of the argument near \(\theta_0\), 
\[
    \theta(f)(x)
    =
    \operatorname{arg}
    {\det}_{\C}\Big[
    D_x\Psi(x,df_x)[e_i]
    +
    D_v\Psi(x,df_x)[\nabla_{e_i}df]
    \Big]_{i=1}^m.
\]

The linearization of $\theta(f)$ is computed in the following proposition.
\begin{proposition}\label{prop: linearization} 
    Let \(f\in C^{2,\alpha}_{\mathrm{loc}}(C^\ast)\). Then, after choosing the
branch of the Lagrangian angle near \(\theta_0\), we have
\[
    \left.\frac{d}{ds}\right|_{s=0}\theta(sf)
    =
    \Delta_{g_C}f .
\]
\end{proposition}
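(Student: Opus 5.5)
We will differentiate the explicit formula for $\theta(sf)$ obtained just before the statement, working pointwise at a fixed $x\in C_1$ with a local oriented $g_C$-orthonormal frame $e_1,\dots,e_m$. Set
\[
A(s):=\Big[D_x\Psi(x,s\,df_x)[e_i]+D_v\Psi(x,s\,df_x)[s\nabla_{e_i}df]\Big]_{i=1}^m ,
\]
so that $\theta(sf)(x)=\arg\det_{\C}A(s)$. At $s=0$, Property 1 of Proposition \ref{prop:weinstein} gives $A(0)=[e_1,\dots,e_m]$. Since $C$ is special Lagrangian with angle $\theta_0$, we have $\det_\C A(0)=e^{i\theta_0}$, and the $e_i$ form a unitary basis of $\C^m$ because $C$ is Lagrangian. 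The formula for the derivative of the argument of a determinant gives
\[
\frac{d}{ds}\Big|_{s=0}\arg\det_\C A(s)=\operatorname{Im}\operatorname{tr}_\C\big(A(0)^{-1}A'(0)\big)=\sum_{i=1}^m \operatorname{Im}\,\langle A'(0)e_i^\flat, e_i\rangle_{\C},
\]
where $\langle\cdot,\cdot\rangle_\C$ is the Hermitian product. Because $\{e_i\}$ is unitary, this equals $\sum_i \langle A'(0)_i, Je_i\rangle$, using the real inner product, up to a sign convention that we will fix by checking a flat example.

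The next step is to identify $A'(0)_i$, the $s$-derivative of $d\iota_{s\,df}(e_i)$ at $s=0$. We will avoid differentiating $\Psi$ directly and instead use the variation $F_s=\iota_{s\,df}$ from \eqref{eq: hamiltonian-variation}. Since $d F_s(e_i)$ is exactly the $i$-th column of $A(s)$, and mixed partials commute, we get
\[
A'(0)_i=D_{e_i}\Big(\tfrac{d}{ds}\big|_{s=0}F_s\Big)=D_{e_i}(J\nabla f)=J\,D_{e_i}\nabla f .
\]
Here $D$ is the flat connection of $\C^m$. Consequently
\[
\langle A'(0)_i,Je_i\rangle=\langle D_{e_i}\nabla f,e_i\rangle=\langle\nabla_{e_i}\nabla f,e_i\rangle_{g_C},
\]
because the normal component of $D_{e_i}\nabla f$ (the second fundamental form term) is orthogonal to $e_i$. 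Summing over $i$ yields $\operatorname{div}_{g_C}\nabla f=\Delta_{g_C}f$.

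Two points need care.

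\textbf{Justifying the interchange.} We must justify interchanging $\frac{d}{ds}$ and $d_x$ for $f\in C^{2,\alpha}_{\mathrm{loc}}$. This is straightforward: $F_s(x)=\Psi(x,s\,df_x)$ is $C^1$ jointly in $(s,x)$, with a continuous mixed derivative, since $\Psi$ is smooth and $df\in C^{1,\alpha}$.

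\textbf{Fixing the sign.} We must make sure the sign convention in $\frac{d}{ds}\arg\det$ matches. I would verify this on $C=\R^m\subset\C^m$, with $\Psi(x,\xi)=x+i\xi$ and $f=\tfrac12\lambda|x|^2$. There $\theta(sf)=m\arctan(s\lambda)$, whose derivative at $s=0$ is $m\lambda=\Delta f$. This confirms the sign and the identity $\operatorname{Im}(\bar a\,b)=\langle b,Ja\rangle$ for the chosen orientation.

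The main obstacle is this bookkeeping of the complex-linear algebra: correctly expressing $\operatorname{Im}\operatorname{tr}(A^{-1}A')$ via the unitary frame, and checking that the phase factor $e^{i\theta_0}$ drops out. It does drop out, because we differentiate the argument and not the determinant itself. The remaining computation is routine.
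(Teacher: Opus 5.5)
Your proposal is correct and follows essentially the same route as the paper: both differentiate $\det_\C\bigl(dF_s(e_i)\bigr)$ using $\partial_s F_s|_{s=0}=J\nabla f$, and both observe that the second fundamental form term contributes only a real part, since $J$ maps it into $T_xC$. Only $\operatorname{tr}\nabla^C df=\Delta_{g_C}f$ then survives in the imaginary part. One small point in your favour is that writing the derivative as $\operatorname{Im}\operatorname{tr}\bigl(A(0)^{-1}A'(0)\bigr)$ with $A(0)$ unitary handles the phase $e^{i\theta_0}$ cleanly, whereas the paper's step $\operatorname{Im}\bigl(Z'(0)/Z(0)\bigr)=\operatorname{Im}Z'(0)$ tacitly assumes $Z(0)=1$.
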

\begin{proof}
Fix \(x\in C^\ast\), and choose a local oriented \(g_C\)-orthonormal frame
\(e_1,\ldots,e_m\) with \(\nabla^C e_i(x)=0\). Set
\[
    F_s:=\iota_{sdf},
    \qquad
    V_i(s):=dF_s(e_i),
    \qquad
    Z(s):={\det}_\C(V_1(s),\ldots,V_m(s)).
\]
Then $\theta(sf)(x)=\operatorname{arg} Z(s)$,
where the branch of \(\operatorname{arg}\) is chosen near \(\theta_0\).\\
We have
\[
    \left.\frac{d}{ds}\right|_{s=0}\theta(sf)(x)
    =
    \operatorname{Im}\frac{Z'(0)}{Z(0)}=\operatorname{Im} Z'(0).
\]

By \eqref{eq: hamiltonian-variation}, there holds
\[
    \left.\frac{d}{ds}\right|_{s=0}F_s
    =
    J\nabla f.
\]
Hence, at \(x\) we have
\[
    \dot V_i(0)
    =
    J\nabla_{e_i}\nabla f .
\]
Decomposing the ambient derivative into tangent and normal components, we obtain
\[
    \dot V_i(0)
    =
    J\nabla^C_{e_i}\nabla f
    +
    JA(e_i,\nabla f),
\]
where $A$ is the second fundamental form for $C$. Write
\[
    \nabla^C_{e_i}\nabla f
    =
    \sum_{j=1}^m f_{ij}e_j,
    \qquad
    f_{ij}:=(\nabla df)(e_i,e_j),
\]
and
\[
    JA(e_i,\nabla f)
    =
    \sum_{j=1}^m a_{ij}e_j .
\]
Here, we are using that \(C^\ast\) is Lagrangian so that
\(JA(e_i,\nabla f)\in T_xC^\ast\). Therefore,
\[
    V_i(s)
    =
    e_i
    +
    s\sum_{j=1}^m a_{ij}e_j
    +
    s\sum_{j=1}^m f_{ij}Je_j
    +
    o(s).
\]
Then
\[
    Z(s)
    =
    Z(0)\det\big(I+s(a_{ij}+i f_{ij})+o(s)\big).
\]
Consequently,
\[
    Z'(0)
    =
    \operatorname{tr}(a_{ij}+i f_{ij})
    =
    \sum_{i=1}^m a_{ii}
    +
    i\sum_{i=1}^m f_{ii}.
\]
Taking imaginary parts, we get
\[
    \left.\frac{d}{ds}\right|_{s=0}\theta(sf)(x)
    =
    \sum_{i=1}^m f_{ii}
    =
    \operatorname{tr}_{g_C}\nabla df
    =
    \Delta_{g_C}f(x).
\]
\end{proof}

Suppose $C^\ast$ has constant Lagrangian angle $\theta_0\in [0,2\pi)$ and, for $f\in C^{2,\alpha}_{\text{loc}}(C^\ast)$, set
\begin{align}\label{eq: def-rest-term-angle}
    R(f):=\theta(f)-\theta_0-\Delta_{g_C} f.
\end{align}
By Proposition \ref{prop: linearization}, the linearization of the
Lagrangian angle at $\theta_0$ is \(\Delta_{g_C}f\). Hence,
\begin{align}\label{eq: properties-of-R}
    R(0)=0,
    \qquad
    \left.\frac{d}{ds}\right|_{s=0}R(sf)=0 .
\end{align}
\begin{lemma}\label{lem:R-rescaling}
For any
$f\in C^{2,\alpha}(C_{r})$, define
\begin{align}
\label{eq: def-f-lambda}
f_\lambda := \lambda^{-2}\,f\circ \delta_\lambda.
\end{align}
Then for every \(\lambda>0\) we have
\[
    R(f)\circ\delta_\lambda=R(f_\lambda) \text{ on } C_{\lambda^{-1}r}.
\]
\end{lemma}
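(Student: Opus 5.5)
The identity follows from three scaling facts, which I will establish in this order. First, the section $d(f_\lambda)$ is the pullback of the section $df$ under $\tilde\delta_\lambda$. Second, $\Psi$ intertwines the dilations, so the perturbed immersions are rescalings of each other and have the same Lagrangian angle at corresponding points. Third, the Laplacian scales correctly under $\delta_\lambda$. Subtracting then gives the claim.

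\textbf{Step 1: the sections.} I will check that
\[
\tilde\delta_\lambda\bigl(x, d(f_\lambda)_x\bigr) = \bigl(\lambda x,\ df_{\lambda x}\bigr)
\]
for $x\in C_{\lambda^{-1}r}$. By the chain rule,
\[
d(f_\lambda)_x=\lambda^{-2}(d\delta_\lambda)_x^\ast\, df_{\lambda x}.
\]
Applying $\lambda^2 (d\delta_\lambda^{-1})^\ast$ returns exactly $df_{\lambda x}$. Also, $|d(f_\lambda)_x|_{g_C}<\varepsilon_0|x|$ holds if and only if $|df_{\lambda x}|<\varepsilon_0\lambda|x|$, since $|(d\delta_\lambda)^\ast\eta|_x=\lambda|\eta|_{\lambda x}$. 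Hence both graphs lie in $\mathcal U_{\varepsilon_0}$ simultaneously, so both sides of the identity are defined.

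\textbf{Step 2: the immersions and the angle.} Property 2 of Proposition \ref{prop:weinstein} then gives
\[
\iota_{df}(\lambda x)=\Psi(\tilde\delta_\lambda(x,d(f_\lambda)_x))=\lambda\,\iota_{d(f_\lambda)}(x),
\]
that is, $\iota_{df}\circ\delta_\lambda=\delta_\lambda\circ\iota_{d(f_\lambda)}$. Pulling back $\Omega$, and using $\delta_\lambda^\ast\Omega=\lambda^m\Omega$ together with the fact that $\delta_\lambda$ preserves orientation on $C$, I get
\[
\delta_\lambda^\ast\iota_{df}^\ast\Omega=\lambda^m\iota_{d(f_\lambda)}^\ast\Omega.
\]
The positive factor $\lambda^m$ does not change the argument. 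Therefore $\theta(f)\circ\delta_\lambda=\theta(f_\lambda)$ pointwise. With the branch fixed near $\theta_0$ (and by continuity in $\lambda$ if needed), the two sides agree as real-valued functions.

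\textbf{Step 3: the Laplacian.} Since $\delta_\lambda^\ast g_C=\lambda^2 g_C$, I have
\[
\Delta_{g_C}(f\circ\delta_\lambda)=\lambda^2(\Delta_{g_C}f)\circ\delta_\lambda,
\]
and hence $\Delta_{g_C}f_\lambda=(\Delta_{g_C} f)\circ\delta_\lambda$. Subtracting $\theta_0$ and this Laplacian term from both sides of Step 2, the definition \eqref{eq: def-rest-term-angle} yields $R(f)\circ\delta_\lambda=R(f_\lambda)$ on $C_{\lambda^{-1}r}$.

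The main point requiring care is Step 1, where the exponent conventions in $\tilde\delta_\lambda$ must match the factor $\lambda^{-2}$ in \eqref{eq: def-f-lambda}, together with the branch choice for the angle. The rest is routine.
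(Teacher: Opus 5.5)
Your proposal is correct and follows essentially the same route as the paper. Both arguments check $\tilde\delta_\lambda(x,df_\lambda(x))=(\delta_\lambda x,df(\delta_\lambda x))$, then use $\Psi\circ\tilde\delta_\lambda=\delta_\lambda\circ\Psi$ to get $\iota_{df}\circ\delta_\lambda=\delta_\lambda\circ\iota_{df_\lambda}$, observe that dilations do not change the Lagrangian angle, and apply the scaling $\Delta_{g_C}(\lambda^{-2}f\circ\delta_\lambda)=(\Delta_{g_C}f)\circ\delta_\lambda$. Your additional checks make explicit what the paper leaves implicit: the conical invariance of $\mathcal U_{\varepsilon_0}$, the positive factor $\lambda^m$ in $\delta_\lambda^\ast\Omega$ together with orientation preservation, and the branch choice.
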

\begin{proof}
By Proposition \ref{prop:weinstein}, $   \Psi\circ\widetilde \delta_\lambda=\delta_\lambda \circ\Psi$ .
Moreover,
\[
    df_\lambda(x)
    =
    \lambda^{-2}(d\delta_\lambda)^*df(\delta_\lambda x),
\]
or equivalently
\[
    df(\delta_\lambda x)
    =
    \lambda^2(d\delta_\lambda^{-1})^*df_\lambda(x).
\]
Therefore
\[
    \widetilde\delta_\lambda(x,df_\lambda(x))
    =
    (\delta_\lambda x,df(\delta_\lambda x)),
\]
and hence
\[
    \iota_{df}(\delta_\lambda x)
    =
    \delta_\lambda\big(\iota_{df_\lambda}(x)\big).
\]
Note that dilations do not affect the Lagrangian angle. Therefore $\theta(f)(\delta_\lambda x)=\theta(f_\lambda)(x)$.
Also, since \(\delta_\lambda^*g_C=\lambda^2g_C\), one has
\[
    \Delta_{g_C}(\lambda^{-2}u\circ\delta_\lambda)
    =
    (\Delta_{g_C}u)\circ\delta_\lambda .
\]
Applying this to \(u=f\), we get
\[
    \Delta_{g_C}f_\lambda
    =
    (\Delta_{g_C}f)\circ\delta_\lambda .
\]
Combining the previous identities gives
\begin{align*}
     R(f)(\delta_\lambda x)
    =
    R(f_\lambda)(x).
\end{align*}
\end{proof}
We also record the local expression of \(R\). As above, let \(e_1,\ldots,e_m\) be a local oriented \(g_C\)-orthonormal frame on \(C^\ast\).
After shrinking the Weinstein neighborhood if necessary, there
is a smooth function
\begin{align}\label{eq: def-of-H}
    G:
    T^*C_r\oplus \operatorname{Sym}^2T^*C_r
    \supset \mathcal V
    \longrightarrow \R
\end{align}
defined in a neighborhood $\mathcal V$ of the zero section such that
\[
    R(f)(x)
    =
    G\big(x,df_x,\nabla df_x\big).
\]
Here,
\[
\begin{aligned}
    G(x,v,w)
    :=
    \operatorname{arg}
    \det_{\C}\Big[
    D_x\Psi(x,v)[e_i]
    +
    D_v\Psi(x,v)[w(e_i,\cdot)]
    \Big]_{i=1}^m-\theta_0-\operatorname{tr}_{g_C(x)} w
\end{aligned}
\]
for \(v\in T_x^*C_r\), \(w\in \operatorname{Sym}^2T_x^*C_r\), and
$\operatorname{tr}_{g_C(x)}w=\sum_{i=1}^m w(e_i,e_i)$.
The observation after Proposition \ref{prop: linearization} implies that
\begin{align}
\label{eq: vanishing-H}
    G(x,0,0)=0,
    \qquad
    D_{(v,w)}G(x,0,0)=0.
\end{align}

\subsection{The weighted spaces}
We will work with various weighted H\"older and weighted Sobolev spaces on Lagrangian cones and conically singular submanifolds. In accordance with \cite{Sm93}, we will identify a given regular Lagrangian cone $C^\ast$ having link $\Sigma$ with a warped product cone: 
\[C^\ast \simeq (0,\infty) \times_r \Sigma.\]
Thus, any point $x = ry \in C^\ast$ can be identified with a pair $(r,y) \in (0,\infty) \times \Sigma$. When we would like to emphasize that $C^\ast$ is determined by $\Sigma$, we will write $C(\Sigma)$ for $C^\ast$.

With the identification above, the cone metric can be written
\[g_C = dr^2 + r^2 g_\Sigma,\]
where $g_\Sigma$ denotes the induced metric on the link, and we can identify the truncated cones $C_{\sigma}(\Sigma)$ for $\sigma > 0$ with
\[C_{\sigma}(\Sigma) \simeq (0,\sigma] \times_r \Sigma.\]
Annuli in $C^\ast$ are represented by 
\[[\sigma_1, \sigma_2] := [\sigma_1, \sigma_2] \times_r \Sigma \text{ for } 0 < \sigma_1 < \sigma_2 < \infty.\]
We can similarly define open and half-open annuli in $C^\ast$.

\subsubsection{Cones}
The weighted spaces will be defined only on the truncated cones $C_1$, although the definitions extend naturally to $C_r$ for any $r > 0$. Let $C_1 := C_1(\Sigma) = (0,1]\times_r \Sigma$ represent a warped product regular Lagrangian cone. Suppose $k \in \{0,1,2\}$, $\alpha \in (0,1)$, $\nu \in \R$, and $\sigma \in (0,\frac{1}{2}]$. Let $\|\cdot\|_{C^k([\sigma_1,\sigma_2])}$ and $\|\cdot\|_{C^{k,\alpha}([\sigma_1,\sigma_2])}$ denote the standard $C^k$ and $C^{k,\alpha}$ norms on functions defined on the annulus $[\sigma_1,\sigma_2] \times_r \Sigma \subset C_1$, and let $[\cdot]_{(\alpha);[\sigma_1,\sigma_2]}$ denote the usual H\"older semi-norm on functions on the same annulus. 

For $u \in C_{\text{loc}}^{k}(C_1)$, we define
\begin{align*}
    [u]_{k;\nu;\sigma} &:= \sum_{j + l \leq k}\|(r\partial_r)^j (\nabla^\Sigma)^l u\|_{C^0([\sigma, 2\sigma])} \, \sigma^{-\nu}\\
    \tnorm{u}_{k;\nu} &:= \sup_{\sigma \in (0,\frac{1}{2}]} [u]_{k;\nu;\sigma},
\end{align*}
and for $u \in C_{\text{loc}}^{k,\alpha}(C_1)$ we define
\begin{align*}
    [u]_{k,\alpha;\nu;\sigma} &:= \sum_{j + l \leq k}\|(r\partial_r)^j (\nabla^\Sigma)^l u\|_{C^0([\sigma, 2\sigma])} \, \sigma^{-\nu} + \sum_{j + l = k}\big[(r\partial_r)^j(\nabla^\Sigma)^lu\big]_{(\alpha); [\sigma,2\sigma]} \, \sigma^{-\nu + \alpha}\\
    \tnorm{u}_{k,\alpha;\nu} &:= \sup_{\sigma \in (0,\frac{1}{2}]} [u]_{k,\alpha;\nu;\sigma}.
\end{align*}
The weighted $C^k$ and H\"older spaces are then given by
\begin{align*}
    C^{k,\nu}(C_1) &:= \{u \in C_{\text{loc}}^k(C_1) : \tnorm{u}_{k;\nu} < \infty\} \\
    C^{k,\alpha, \nu}(C_1) &:= \{u \in C_{\text{loc}}^{k,\alpha}(C_1) : \tnorm{u}_{k,\alpha;\nu} < \infty\}.
\end{align*}
They are Banach spaces with their respective norms.
The same notation will be used for tensor fields on \(C_1\), with pointwise
norms computed using \(g_C\) and derivatives taken with respect to the
Levi-Civita connection of \(g_C\). In particular, expressions such as
\(\tnorm{du}_{1,\alpha;\nu}\) are understood in this sense.

The spaces $C_0^{k,\nu}(C_1)$ and $C_0^{k,\alpha,\nu}(C_1)$ will represent those functions in $C^{k,\nu}(C_1)$ and $C^{k,\alpha,\nu}(C_1)$, respectively, vanishing on \[\partial C_1 = \{1\} \times \Sigma \simeq \Sigma.\] 
We have the following relationships between the weighted H\"older spaces. 
\begin{lemma}\label{lem:derivative-weight}
For any $u\in C^{2,\alpha}_{\text{loc}}(C_{1})$ and any $\nu\in\R$, there is a constant $c:= c(\Sigma,\alpha)$ such that
\begin{align}\label{eq: est-fist-second-der}  
\tnorm{du}_{1,\alpha;\nu}\le c\tnorm{u}_{2,\alpha;\nu+1}.
\end{align}
Moreover, if $\nu<\nu^\prime$ we have
\begin{align}\label{eq: comparison-exponents}
\tnorm{du}_{1,\alpha;\nu}\le \tnorm{du}_{1,\alpha;\nu^\prime}.
\end{align}
\end{lemma}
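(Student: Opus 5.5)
Both inequalities reduce to comparing the weighted seminorms annulus by annulus, i.e.\ for each fixed $\sigma\in(0,\tfrac12]$, and then taking the supremum over $\sigma$. The whole argument rests on writing $du$ and its covariant derivative in a frame adapted to the warped product $C_1\simeq(0,1]\times_r\Sigma$. Let $\{\omega^a\}$ be a local $g_\Sigma$-orthonormal coframe on $\Sigma$. Then
\[
du=\partial_r u\,dr+\sum_a (\nabla^\Sigma_{a}u)\,\omega^a .
\]
Since $|dr|_{g_C}=1$ and $|\omega^a|_{g_C}=r^{-1}$, we get
\[
|du|_{g_C}\simeq r^{-1}\big(|r\partial_r u|+|\nabla^\Sigma u|\big).
\]
Thus on $[\sigma,2\sigma]$ the coefficients of $du$, measured in the $g_C$-orthonormal coframe $(dr, r\omega^a)$, are $r^{-1}$ times the scale-invariant quantities $r\partial_r u$ and $\nabla^\Sigma u$.

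To compare the weighted norms, I would use the rescaling $\delta_\sigma$. Pull back the annulus $[\sigma,2\sigma]$ to $[1,2]$ and set $\tilde u:=u\circ\delta_\sigma$. The operators $r\partial_r$ and $\nabla^\Sigma$ are dilation invariant, so $[u]_{2,\alpha;\nu+1;\sigma}=\sigma^{-\nu-1}\|\tilde u\|_{C^{2,\alpha}([1,2])}$, where we also use that Hölder seminorms scale as $\sigma^{\alpha}$, matching the weight $\sigma^{-\nu+\alpha}$. For the one-form we have $\delta_\sigma^\ast du=d\tilde u$, while $\delta_\sigma^\ast g_C=\sigma^2g_C$ contributes a factor $\sigma^{-1}$ to pointwise norms of one-forms. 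The same holds for the covariant derivative, since the Levi-Civita connection is invariant under constant rescaling. Hence $[du]_{1,\alpha;\nu;\sigma}=\sigma^{-\nu-1}\|d\tilde u\|_{C^{1,\alpha}([1,2])}$, with the norm on the right computed with respect to the fixed metric $g_C$ on the compact annulus $[1,2]\times\Sigma$. On that fixed compact annulus, $\|d\tilde u\|_{C^{1,\alpha}}\le c(\Sigma,\alpha)\|\tilde u\|_{C^{2,\alpha}}$. This is the standard fact that $\nabla d\tilde u$ is expressed through the second derivatives $(r\partial_r)^j(\nabla^\Sigma)^l\tilde u$ with $j+l\le2$, plus lower-order terms with smooth coefficients: the Christoffel symbols of $dr^2+r^2g_\Sigma$ and the commutators of $r\partial_r$ with $\nabla^\Sigma$, all bounded on $[1,2]$. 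Taking the supremum over $\sigma$ gives \eqref{eq: est-fist-second-der}.

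For \eqref{eq: comparison-exponents}, I would note that each term in $[du]_{1,\alpha;\nu;\sigma}$ carries a factor $\sigma^{-\nu}$ or $\sigma^{-\nu+\alpha}$. Since $\sigma\le\tfrac12<1$ and $\nu<\nu'$, we have $\sigma^{-\nu}\le\sigma^{-\nu'}$ and $\sigma^{-\nu+\alpha}\le\sigma^{-\nu'+\alpha}$. Hence $[du]_{1,\alpha;\nu;\sigma}\le[du]_{1,\alpha;\nu';\sigma}$ for every $\sigma$, and taking the supremum concludes.

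The only real care is in the first part. One must state precisely how the weighted Hölder seminorms of a tensor are defined (which frame is used and which derivatives are taken) and check that the scaling bookkeeping is consistent: the extra power $\sigma^{-1}$ coming from the tensor norm must exactly produce the shift $\nu\mapsto\nu+1$. Once everything is transported to the fixed annulus $[1,2]\times\Sigma$, the estimate is routine, with a constant depending only on $\Sigma$ and $\alpha$.
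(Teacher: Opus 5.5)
The paper states this lemma without proof and treats it as immediate from the definitions, so there is no argument to compare yours against. Your annulus-by-annulus rescaling to $[1,2]\times\Sigma$ is the natural way to prove it. Your proof of \eqref{eq: comparison-exponents} is exactly right, with constant $1$: $\nu$ enters only through the factors $\sigma^{-\nu}$ and $\sigma^{-\nu+\alpha}$, and $\sigma\le\tfrac12$.

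\textbf{The step to tighten.} The sentence ``the same holds for the covariant derivative'' is not right as stated. Invariance of the Levi-Civita connection under constant rescaling does give $\delta_\sigma^\ast(\nabla du)=\nabla d\tilde u$. But $\nabla du$ is a $2$-tensor, so $|\nabla du|_{g_C}(\sigma x)=\sigma^{-2}|\nabla d\tilde u|_{g_C}(x)$, not $\sigma^{-1}$.

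Your identity $[du]_{1,\alpha;\nu;\sigma}\simeq\sigma^{-\nu-1}\|d\tilde u\|_{C^{1,\alpha}([1,2])}$ therefore holds only if the derivatives in the weighted tensor norm are dilation-invariant. Concretely, they must be $r\nabla_{\partial_r}$ and $\nabla_{e_a}$ with $e_a$ of unit $g_\Sigma$-length (equivalently $r\nabla$, with Hölder term $[r\nabla du]_{(\alpha)}$). This is the tensor analogue of $(r\partial_r)^j(\nabla^\Sigma)^l$.

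\textbf{Why the convention matters.} With an unweighted $\nabla$ in unit $g_C$-directions, \eqref{eq: est-fist-second-der} is false. For example, $u=r^{\nu+1}$ with $\nu\neq-1$ has $\tnorm{u}_{2,\alpha;\nu+1}<\infty$. However, $|\nabla du|_{g_C}\sim r^{\nu-1}$, so $\sigma^{-\nu}\|\nabla du\|_{C^0([\sigma,2\sigma])}\sim\sigma^{-1}$, which is unbounded.

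The paper's later uses also require the scale-invariant convention. Examples are $\tnorm{dv}_{1,\alpha;\nu-1}\le c\tnorm{v}_{2,\alpha;\nu}$ and the equivalence of $d_X$ with the $C^{2,\alpha,\nu}$ metric in Section 3. State this convention explicitly and your proof is complete.
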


We will also work in various weighted $L^2$ and Sobolev spaces. In our chosen coordinates on $C_1$
\[d\vol_{C} := r^{m-1} dr dy,\]
where $dy$ is the volume form on $\Sigma$. Let $L^2(C_1)$ be the usual $L^2$ space on $C_1$ with norm $\|\cdot\|_{L^2}$, and let $L_{\text{loc}}^2(C_1)$ be the measurable functions that are square integrable on compact subsets of $C_1$. We can similarly define the Banach spaces $L^p(C_1)$, and $L_{\text{loc}}^p(C_1)$ for any $p \geq 1$. The norm on $L^p(C_1)$ will be denoted by $\|\cdot\|_{L^p}$. For $\delta \in \R$, we then define the weighted $L^2$-spaces by
\[r^\delta L^2(C_1) := \{ u  \in L_{\text{loc}}^2(C_1) : r^{-\delta} u \in L^2(C_1)\}.\]
The norm on $r^\delta L^2(C_1)$ is 
\[\|u\|_{0; \delta} := \|r^{-\delta}u\|_{L^2(C_1)}.\]
The space $r^\delta L^2(C_1)$ is a Hilbert space with this norm and its natural inner product. 

For $k = 1,2$, $H^k(C_1)$ will represent the scale-invariant Sobolev spaces on $C_1$:  
\[H^k(C_1) := \Big\{u \in L^2(C_1) : \text{ weak derivatives order} \leq k \text{ exist, } \sum_{j + l \leq k} \|(r\partial_r)^j(\nabla^\Sigma)^lu\|_{L^2(C_1)}^2 < \infty\Big\}.\]
The spaces $H^k(C_1)$ are Hilbert spaces when endowed with the norm
\[\|u\|_{H^k(C_1)}^2 := \sum_{j + l \leq k} \|(r\partial_r)^j(\nabla^\Sigma)^lu\|_{L^2(C_1)}^2\]
and its associated inner product. Of course, we can define $H_{\text{loc}}^k(C_1)$ in the same manner as before.

For $k = 1,2$, the weighted Sobolev spaces $r^\delta H^k(C_1)$ can now be defined similarly to $r^\delta L^2(C_1)$:
\[r^\delta H^k(C_1) := \{u \in H_{\text{loc}}^k(C_1) : r^{-\delta}(r \partial_r)^j(\nabla^\Sigma)^lu \in L^2(C_1) \text{ whenever } j + l \leq k\},\]
with norm
\[\|u\|_{k; \delta}^2 := \sum_{j + l \leq k} \|(r \partial_r)^j(\nabla^\Sigma)^lu\|_{0;\delta}^2.\]
The spaces $r^\delta H^k(C_1)$ are Hilbert spaces with this norm and its natural inner product. We define $H_0^k(C_1)$ and $r^\delta H_0^k(C_1)$ to be the closure under the $H^k(C_1)$ and $r^\delta H^k(C_1)$ norms, respectively, of the space of smooth functions vanishing on the link $\Sigma$ of $C_1$.

\subsubsection{Conically singular submanifolds}
Suppose now $L \subset \mathbb{C}^m$ is a compact Lagrangian submanifold with non-empty boundary having finitely many isolated singularities $p_1,\ldots, p_N$. We will denote the singular set of $L$ by
\[\sing(L) := \{p_1,\ldots, p_N\} \subset \mathbb{C}^m.\]
We say that $L$ is \emph{conically singular} if at each $p_i$ there is a regular Lagrangian cone $C_1(\Sigma^i)$ such that $L$ is \emph{modeled on $C_1(\Sigma^i)$ at $p_i$}. In other words, near each point $p_i$ the submanifold $L$ can be identified with the graph of $du_i$ over $C_1(\Sigma^i)$ for some $u_i \in C^\infty(C_1(\Sigma^i))$ via a Weinstein neighborhood map, where $\nabla^k du_i \sim O(\rho^{\nu - k})$ for some $\nu > 1$, $k = 0,1$. The singular points $p_i$ are sometimes called the \emph{conic points} for $L$. Similarly to a cone, we identify a conically singular submanifold $L$ with $L \setminus \sing(L)$. 

In this paper, our primary conically singular submanifolds will be Lagrangian cones and \emph{approximate solutions} constructed as (unit length) Lagrangian cones $C_1(\Sigma^1),\ldots, C_1(\Sigma^N)$ joined together along their boundaries by thin Lagrangian bridges of width $O(\varepsilon)$. Coordinates on $L^\varepsilon$ will coincide with the cone coordinates on $L_i^\varepsilon := (0,1) \times \Sigma^i$ for each $i$, and for each $i$ $\rho_i$ will denote the geodesic distance in $L^\varepsilon$ from $p_i$. Let $\rho :L^\varepsilon \rightarrow (0,\infty)$ be any bounded smooth function which is equal to $\rho_i$ on $C_1(\Sigma^i)$ (hence, equals $1$ on $\Sigma^i$) for each $i = 1,\ldots, N$, and is constant on the $\varepsilon$-bridges outside of a small $\eta$-neighborhood of the patching region where the bridges meet the cone boundaries. Note that $\eta$ can be chosen so that it does not depend on $\varepsilon$. For each $\sigma \in (0,1)$, we define
\[L_\sigma^\varepsilon := \{ x \in L^\varepsilon : \rho(x) \geq \sigma\}.\]
That is, $L_\sigma^\varepsilon$ is formed from the cones truncated by a length $\sigma \in (0,1)$ away from the vertex. 

The weighted spaces on $L^\varepsilon$ are defined similarly to those on the cones. Set $A_0 := L_{4^{-1}}^\varepsilon$ and, for $0 < \sigma_1 < \sigma_2 < 1$, let 
\[[\sigma_1,\sigma_2] \times_{\rho_i} \Sigma^i \text { for each } i = 1,\ldots, N\]
be an annulus on $C_1(\Sigma^i)$ for each $i$. The ``annuli" in $L^\varepsilon$ are defined to be the union
\[[\sigma_1,\sigma_2] := \bigcup_{i = 1}^N [\sigma_1,\sigma_2] \times_{\rho_i} \Sigma^i.\]

For $u \in C_{\text{loc}}^k(L^\varepsilon)$ ($k = 0,1,2$) and $0 < \sigma < 1/4$,  we will define
\begin{align*}
    [u]_{k;\nu;\sigma}^i &:= \sum_{j + l \leq k}\|(\rho\partial_{\rho})^j (\nabla^{\Sigma^i})^l u\lvert_{C_1(\Sigma^i)}\|_{C^0([\sigma, 2\sigma])} \, \sigma^{-\nu}\\
    \tnorm{u}_{k;\nu} &:= \|u\|_{C^k(A_0)} + \sup_{\sigma \in (0,\frac{1}{4})} \max_{i = 1,\ldots,N} [u]_{k;\nu;\sigma}^i.
\end{align*}
Similarly, for $u \in C_{\text{loc}}^{k,\alpha}(L^\varepsilon)$ we may set
\begin{align*}
    [u]_{k,\alpha;\nu;\sigma}^i &:= \sum_{j + l \leq k}\|(\rho\partial_\rho)^j (\nabla^{\Sigma^i})^l u \lvert_{C_1(\Sigma^i)}\|_{C^0([\sigma, 2\sigma])} \, \sigma^{-\nu} + \sum_{j + l = k}\big[(\rho\partial_\rho)^j(\nabla^{\Sigma^i})^lu\lvert_{C_1(\Sigma^i)}\big]_{(\alpha); [\sigma,2\sigma]} \, \sigma^{-\nu + \alpha}\\
    \tnorm{u}_{k,\alpha;\nu} &:= \|u\|_{C^{k,\alpha}(A_0)} + \sup_{\sigma \in (0,\frac{1}{4})} \max_{i = 1,\ldots,N} \,[u]_{k,\alpha;\nu;\sigma}^i.
\end{align*}
The Banach spaces $C^{k,\nu}(L^\varepsilon)$ and $C^{k,\alpha,\nu}(L^\varepsilon)$ are then given by
\begin{align*}
    C^{k,\nu}(L^\varepsilon) &:= \{u \in C_{\text{loc}}^k(L^\varepsilon) : \tnorm{u}_{k;\nu} < \infty\} \\
    C^{k,\alpha, \nu}(L^\varepsilon) &:= \{u \in C_{\text{loc}}^{k,\alpha}(L^\varepsilon) : \tnorm{u}_{k,\alpha;\nu} < \infty\}.
\end{align*}
The spaces $C_0^{k,\nu}(L^\varepsilon)$ and $C_0^{k,\alpha,\nu}(L^\varepsilon)$ represent those functions in their respective spaces that vanish on $\partial L^\varepsilon$.
\begin{remark}
    Lemma \ref{lem:derivative-weight} holds for both the $C^{k,\nu}$ and $C^{k,\alpha,\nu}$ norms on $L^\varepsilon$.
\end{remark}

Define two norms:
\begin{align*}
    \|u\|_{0;\delta}^2 &:= \|\rho^{-\delta}u \|_{L^2(L^\varepsilon \setminus A_0)}^2 + \|u\|_{L^2(A_0)}^2 \text{ for } u \in L_{\text{loc}}^2(L^\varepsilon) \\
    \|u\|_{k;\delta}^2 &:= \sum_{i = 1}^N\sum_{j + l \leq k}\|\chi_{i} \cdot (\rho \partial_\rho)^{j}(\nabla^{\Sigma^i})^l u\|_{0;\delta}^2 + \|u\|_{H^k(A_0)}^2 \text{ for } u \in H_{\text{loc}}^k(L^\varepsilon),
\end{align*}
where in the second line $\chi_{i} : L^\varepsilon \rightarrow \R$ is the characteristic function of $(0,1/4] \times_{\rho_i} \Sigma^i \subset C_1(\Sigma^i)$ for each $i$. Elements of $\rho^\delta L^2(L^\varepsilon)$ and $\rho^\delta H^k(L^\varepsilon)$ are those functions in $L_{\text{loc}}^2(L^\varepsilon)$ and $H_{\text{loc}}^k(L^\varepsilon)$ for which their respective norms are finite. Both $\rho^\delta L^2(L^\varepsilon)$ and $\rho^\delta H^k(L^\varepsilon)$ are Hilbert spaces with the norms above and their associated inner products. The space $\rho^\delta H_0^k(L^\varepsilon)$ is the closure under $\|\cdot\|_{k;\delta}$ of the smooth functions vanishing on $\partial L^\varepsilon$, and $H_0^k(L^\varepsilon)$ is defined the same as in the conical case. 
\begin{remark}
    The norms defined in this section are equivalent to those found in \cite{Sm89, Dim25}. Moreover, with only slight modifications the stated definitions can be extended to general conically singular Lagrangian submanifolds $L$ (e.g. see \cite{Sm93}), and general vector bundles over $L$. We can also define the weighted spaces so that we may prescribe distinct weights at each conic point. However, in this case the weights $\nu$ and $\delta$ are viewed as $N$-vectors (e.g. see \cite{Sm89, Sm93}). Our main result (Proposition \ref{prop: fixed-point-theorem}) holds in this setting as well.
\end{remark}

\section{Perturbations of a special Lagrangian cone}
In this section, we will prove Theorem \ref{thm: CHS-intro}. For convenience, we restate the theorem below.
\begin{theorem}\label{thm: SL-CHS}
    Let $C_1$ be a truncated special Lagrangian cone in $\C^m$ centered at the origin, with link $\Sigma$.
There exists a
finite-dimensional linear subspace
$W\subset C^{2,\alpha}(\Sigma)$
such that, for every $\psi\in C^{2,\alpha}(\Sigma)$ sufficiently small,
there exist a function
$u\in C^{2,\alpha}_{\mathrm{loc}}(C)$
and an element $w\in W$ satisfying
$u|_\Sigma=\psi+w$. Moreover, the Lagrangian submanifold $L_{du}$ induced by $du$ through a Weinstein neighborhood map is special Lagrangian and has a singularity at the origin modeled on the cone $C_1$.
\end{theorem}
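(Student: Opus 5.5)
We follow the Caffarelli--Hardt--Simon scheme. Since $C_1$ is special Lagrangian, $\theta(0)=\theta_0$. By \eqref{eq: def-rest-term-angle}, $L_{du}$ is special Lagrangian exactly when
\[
\Delta_{g_C}u=-R(u)\quad\text{on } C_1 .
\]
We therefore look for $u$ in $C^{2,\alpha,\nu+1}(C_1)$ with a weight $\nu>1$. This decay makes $|du|_{g_C}\le c\,r^{\nu}$, so $\Gamma_{du}\subset\mathcal U_{\varepsilon_0}$ once the norm is small. It also gives $\nabla^k du=O(r^{\nu-k})$, which is precisely the statement that $L_{du}$ has a conical singularity at $0$ modeled on $C_1$. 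We pick the weight $\nu+1>2$ so that it is not an indicial root of $\Delta_{g_C}$. The indicial roots are $\gamma^\pm_j=\frac{-(m-2)\pm\sqrt{(m-2)^2+4\lambda_j}}{2}$, where $\lambda_j$ are the eigenvalues of $\Delta_\Sigma$; they form a discrete set, so a generic choice of $\nu$ just above $1$ works.

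\textbf{Step 1 (linear theory).} We separate variables, $u=\sum_j a_j(r)\phi_j(y)$, or equivalently use the Fredholm theory for conical operators (\cite{Sm93}, \cite[Section 2]{FMM}). The model problem is
\[
\Delta_{g_C}v=f\in C^{0,\alpha,\nu-1}(C_1),\qquad v|_\Sigma=\psi .
\]
It has a solution $v\in C^{2,\alpha,\nu+1}(C_1)$ satisfying $\tnorm{v}_{2,\alpha;\nu+1}\le c(\tnorm{f}_{0,\alpha;\nu-1}+\|\psi\|_{C^{2,\alpha}(\Sigma)})$, provided the boundary data is corrected by an element of the finite-dimensional space
\[
W:=\operatorname{span}\{\phi_j:\gamma_j^+<\nu+1\}.
\]
The reason is as follows. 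For each $j$ the ODE $a_j''+\frac{m-1}{r}a_j'-\frac{\lambda_j}{r^2}a_j=f_j$ is solved by variation of parameters, integrating from $0$ when $\gamma_j^+>\nu+1$. The resulting solution decays like $r^{\nu+1}$, and we then fix its value at $r=1$ using the homogeneous solution $r^{\gamma_j^+}$. For the finitely many $j$ with $\gamma_j^+<\nu+1$, the homogeneous solutions $r^{\gamma_j^\pm}$ are not admissible at the weight $\nu+1$. Hence the boundary value $a_j(1)$ is determined by $f_j$ and cannot be prescribed; this is the role of the $W$-component $w$. Schauder estimates on dyadic annuli, rescaled to unit size, upgrade the $L^2$ and ODE bounds to the weighted Hölder norm, with constants uniform in $j$. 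Denote by $\mathcal S(f,\psi)=(v,w)$ the resulting bounded linear solution map, with $v|_\Sigma=\psi+w$.

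\textbf{Step 2 (nonlinear estimate).} This is the main obstacle: we need
\[
\tnorm{R(u)-R(u')}_{0,\alpha;\nu-1}\le c\,\big(\tnorm{u}_{2,\alpha;\nu+1}+\tnorm{u'}_{2,\alpha;\nu+1}\big)\,\tnorm{u-u'}_{2,\alpha;\nu+1}
\]
for small $u,u'$. I would obtain it through Lemma \ref{lem:R-rescaling}. On an annulus $[\sigma,2\sigma]$, rescale by $\lambda=\sigma$, so that $u_\sigma=\sigma^{-2}u\circ\delta_\sigma$ lives on $[1,2]$ and satisfies $\|u_\sigma\|_{C^{2,\alpha}([1,2])}\lesssim\sigma^{\nu-1}\tnorm{u}_{2,\alpha;\nu+1}$. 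On the fixed annulus $[1,2]$ we have $R(u_\sigma)=G(x,du_\sigma,\nabla du_\sigma)$. Since $G$ is smooth and by \eqref{eq: vanishing-H} vanishes to second order at $(v,w)=(0,0)$, we get $\|R(u_\sigma)\|_{C^{0,\alpha}}\lesssim\|u_\sigma\|_{C^{2,\alpha}}^2$, with the analogous difference bound. Scaling back, $R(u)=R(u_\sigma)\circ\delta_\sigma^{-1}$ is of size $\sigma^{2(\nu-1)}$. Because $\nu>1$, we have $2(\nu-1)\ge\nu-1$, and this gives the quadratic estimate above.

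\textbf{Step 3 (fixed point).} Given a small $\psi$, define
\[
T(u):=v,\qquad (v,w)=\mathcal S(-R(u),\psi).
\]
By Steps 1 and 2, $T$ maps the ball $\{\tnorm{u}_{2,\alpha;\nu+1}\le 2c\|\psi\|_{C^{2,\alpha}}\}$ into itself and is a contraction when $\|\psi\|_{C^{2,\alpha}}$ is small. Its fixed point $u$ satisfies $\Delta u=-R(u)$, i.e. $\theta(u)\equiv\theta_0$, and $u|_\Sigma=\psi+w$ with $w\in W$. The decay $u\in C^{2,\alpha,\nu+1}$ with $\nu>1$ then gives the conical singularity at the origin modeled on $C_1$, which proves the statement.
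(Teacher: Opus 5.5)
Your proposal is correct and is essentially the paper's proof: your weight $\nu+1>2$ is the paper's $\nu>2$, your $W$ is $\ker\Pi_\nu$, and Step 2 is the rescaling argument of Propositions \ref{prop: bound-Rf} and \ref{prop: contraction-estimate} (via Lemma \ref{lem:R-rescaling} and the second-order vanishing of $G$), while Step 3 is the same contraction argument; the only difference is that the paper measures distances by the weighted $C^{1,\alpha}$ norm of $du-du'$, which it shows is equivalent to your $\tnorm{u-u'}_{2,\alpha;\nu+1}$. One small slip in Step 1: for the modes with $\gamma_j^+>\nu+1$, the outer integral in the variation-of-parameters formula must be taken over $[r,1]$, since it diverges at $0$, whereas integration from $0$ is what one does for the finitely many modes with $\gamma_j^+<\nu+1$, as in the paper's formula for $u_j$; your conclusions about which boundary modes can be prescribed are unaffected.
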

This is a special Lagrangian analogue of the main result of
\cite{CHS}. \\

In order to prove Theorem \ref{thm: SL-CHS} we will argue as follows. Assume that $C$ has constant Lagrangian angle equal to $\theta_0$. After a unitary rotation of the cone, we may assume that $\theta_0=0$.
Let $\nu\notin \Lambda_\Sigma$ with $\nu> 2$, where $\Lambda_\Sigma$ denotes the set of indicial roots of $\Delta_\Sigma$ (see Appendix \ref{ssec: existence-uniqueness} for the definition).
Let $0=\mu_0<\mu_1\leq \mu_2\leq...$ denote the eigenvalues of $\Delta_\Sigma$, and for any $j\in \mathbb{N}$ denote by $\phi_j$ the corresponding normalized eigenfunction.
We will denote by $\Pi_\nu$ the $L^2$- projection onto the orthogonal complement to the finite dimensional space generated by eigenfunctions $\phi_j$ for which the associated indicial root $\gamma_j$ satisfies $\gamma_j<\nu$ (see Subsection \ref{ssec: existence-uniqueness} for more details). The space $V$ in the theorem will correspond to the image of $\Pi_\nu$ in $C^{2,\alpha}(\Sigma)$. For the remainder of the section, we set $\Delta := \Delta_{g_C}$.

By Theorem \ref{thm: CHS-linear}, for any $f\in C^{0,\alpha, \nu-2}$ (for $\alpha \in (0,1)$) and for any $\psi\in C^{2,\alpha}(\Sigma)$, there exists a unique map $u\in C^{2,\alpha,\nu}(C_1)$ solving
\begin{align}\label{eq:dirichlet-u-section-4}
\begin{cases}
    \Delta u=f & \text{in } C_{1},\\
    \Pi_\nu u=\Pi_\nu\psi & \text{on } \Sigma.
\end{cases}
\end{align}
Moreover, $u$ satisfies the estimate
\begin{align}\label{eq: est-u-CHS}
    \tnorm{u}_{2,\alpha;\nu}
    \leq
    c\left(
       \tnorm{f}_{0,\alpha;\nu-2}
        +
        \|\Pi_\nu\psi\|_{C^{2,\alpha}(\Sigma)}
    \right).
\end{align}
For any \(u\) with
$\tnorm{du}_{1,\alpha;1}$
sufficiently small and $\tnorm{du}_{1,\alpha;\nu/2}<\infty$, let $\mathcal{P}_\psi(u)$ denote the unique solution in $C^{2,\alpha, \nu}(C_1)$ of
\begin{align*}\label{eq:dirichlet-u-section-4-for-v}
\begin{cases}
    \Delta v=-R(u) & \text{in } C_{1},\\
    \Pi_\nu v=\Pi_\nu\psi & \text{on } \Sigma.
\end{cases}
\end{align*}
Note that, if \(u\) lies in the domain of \(\mathcal P_\psi\) and $\mathcal{P}_\psi(u)=u$, then $L_{du}$ has constant Lagrangian angle equal to zero by \eqref{eq: def-rest-term-angle}, and therefore is a special Lagrangian submanifold as in Theorem \ref{thm: SL-CHS}. 

In order to prove the theorem, we will show that for any $\psi\in C^{2,\alpha}(\Sigma)$ sufficiently small, $\mathcal{P}_\psi$ has a fixed point.
To this end, we first prove two estimates for the nonlinear remainder $R(u)$ in Section \ref{ssec: est-Rf}.
The existence of a fixed point for $\mathcal{P}_\psi$ will then be proved in Section \ref{ssec: fixed-point-argument}.

\subsection{Estimates for the nonlinear remainder}\label{ssec: est-Rf}

We first prove the required estimates on a fixed annulus $A := (1/2,1) \times_r \Sigma$.

\begin{lemma}\label{lem:fixed-annulus-R}
There are constants \(c,\varepsilon_0>0\) such that, if
\[
    \|df\|_{C^{1,\alpha}(A)},\ \|dh\|_{C^{1,\alpha}(A)}
    \le \varepsilon_0,
\]
then
\begin{align}
    \|R(f)\|_{C^{0,\alpha}(A)}
    &\le
    c\|df\|_{C^{1,\alpha}(A)}^2, \label{eq:fixed-annulus-R-quadratic}\\
    \|R(f)-R(h)\|_{C^{0,\alpha}(A)}
    &\le
    c\bigl(
        \|df\|_{C^{1,\alpha}(A)}
        +
        \|dh\|_{C^{1,\alpha}(A)}
    \bigr)
    \|df-dh\|_{C^{1,\alpha}(A)}. \label{eq:fixed-annulus-R-lipschitz}
\end{align}
\end{lemma}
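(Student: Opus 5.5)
The plan is to use the local representation $R(f)(x)=G(x,df_x,\nabla df_x)$ from \eqref{eq: def-of-H}, together with the vanishing conditions \eqref{eq: vanishing-H}, and to apply Taylor's theorem in the fiber variables $(v,w)$. On the fixed annulus $A$ (more precisely on a slightly larger compact annulus containing $\bar A$, which is harmless since $\Psi$ is smooth on the conical neighborhood $\mathcal U_{\varepsilon_0}$), the function $G$ is smooth on a compact neighborhood $K=\{(x,v,w): x\in\bar A,\ |v|+|w|\le \varepsilon_0\}$ of the zero section. Choosing $\varepsilon_0$ small makes $K$ lie in the domain $\mathcal V$, and gives uniform bounds on all derivatives of $G$ on $K$ up to order three. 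Throughout, $\|df\|_{C^{1,\alpha}(A)}$ controls both $\sup|df|+\sup|\nabla df|$ and the Hölder seminorms of $df$ and $\nabla df$. In addition, $x\mapsto (x,df_x,\nabla df_x)$ takes values in $K$.

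\textbf{Step 1: algebraic structure.} Write $z=(v,w)$. Since $G(x,0)=0$ and $D_zG(x,0)=0$, Taylor's formula with integral remainder gives
\[
G(x,z)=\int_0^1(1-t)\,D_z^2G(x,tz)[z,z]\,dt=:Q(x,z)[z,z],
\]
where $Q$ is a smooth symmetric-bilinear-form valued function on $K$. Similarly,
\[
G(x,z_1)-G(x,z_2)=\int_0^1\big(D_zG(x,z_2+t(z_1-z_2))\big)dt\,[z_1-z_2].
\]
Using $D_zG(x,0)=0$ once more, we have $D_zG(x,z)=B(x,z)[z]$ with $B$ smooth. Hence
\[
G(x,z_1)-G(x,z_2)=\widetilde B(x,z_1,z_2)\big[z_2+t(z_1-z_2),\,z_1-z_2\big]
\]
averaged in $t$. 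This is bilinear in a quantity bounded by $|z_1|+|z_2|$ and in $z_1-z_2$, with coefficients smooth in $(x,z_1,z_2)$.

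\textbf{Step 2: Hölder estimates.} Insert $z=Z_f(x):=(df_x,\nabla df_x)$. We use the standard facts that $C^{0,\alpha}(A)$ is an algebra, $\|ab\|_{C^{0,\alpha}}\le c\|a\|_{C^{0,\alpha}}\|b\|_{C^{0,\alpha}}$, and that composition with a smooth function $\Phi$ on $K$ satisfies $\|\Phi(x,Z_f)\|_{C^{0,\alpha}}\le c(1+\|Z_f\|_{C^{0,\alpha}})\le c$ whenever $\|Z_f\|_{C^{0,\alpha}}\le\varepsilon_0$. Applying these to $Q(x,Z_f)[Z_f,Z_f]$ gives \eqref{eq:fixed-annulus-R-quadratic} with $\|Z_f\|_{C^{0,\alpha}}\le c\|df\|_{C^{1,\alpha}(A)}$. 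Applying them to the representation of Step 1 with $z_1=Z_f$ and $z_2=Z_h$ gives \eqref{eq:fixed-annulus-R-lipschitz}. For the frame dependence: $G$ is defined via a local orthonormal frame, but $\arg\det_\C$ and the trace are frame independent for oriented orthonormal frames. We therefore cover $\bar A$ by finitely many frame patches and take the maximum of the constants.

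\textbf{Main obstacle.} The only delicate point is the Hölder estimate of the composition $x\mapsto \Phi(x,Z_f(x))$. Bounding its seminorm by $[\Phi(\cdot,Z_f)]_\alpha\le \|D_x\Phi\|_\infty\,\mathrm{diam}^{1-\alpha}+\|D_z\Phi\|_\infty[Z_f]_\alpha$ requires $\Phi$ to be $C^1$ on the convex fiberwise set $K$. This holds after shrinking $\varepsilon_0$ so that the fiber balls are convex and contained in $\mathcal V$. The segment $tz$ and the segments between $Z_f$ and $Z_h$ then stay in $K$, which justifies the Taylor formulas above. The resulting constants depend only on $C$ (through $\Psi$), $\alpha$, and $A$.
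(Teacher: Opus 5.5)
Your proposal is correct and follows essentially the same route as the paper. Both arguments write $R(f)(x)=G(x,df_x,\nabla df_x)$ and use $G(x,0)=0$, $D_zG(x,0)=0$ to obtain the integral representations $G(x,z)=B(x,z)[z,z]$ and $G(x,z_f)-G(x,z_h)=L(x,z_f,z_h)[z_f-z_h]$, then conclude via the composition estimate in a local trivialization and the Banach algebra property of $C^{0,\alpha}(A)$. Your factorization $D_zG(x,z)=B(x,z)[z]$ simply spells out the paper's brief claim that $\|L(\cdot,z_f,z_h)\|_{C^{0,\alpha}(A)}\le c\bigl(\|z_f\|_{C^{0,\alpha}(A)}+\|z_h\|_{C^{0,\alpha}(A)}\bigr)$.
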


\begin{proof}
Recall that
\[
    R(f)(x)=G(x,df_x,\nabla df_x),
\]
where \(G\) is smooth near the zero section and satisfies
\[
    G(x,0,0)=0,
    \qquad
    D_{(v,w)}G(x,0,0)=0.
\]
Set
\[
    z_f:=(df,\nabla df),
    \qquad
    z_h:=(dh,\nabla dh).
\]
Note that
\[
    \|z_f\|_{C^{0,\alpha}(A)}
    \le
    c\|df\|_{C^{1,\alpha}(A)},
\]
and similarly for \(h\).

For the quadratic estimate, Taylor's formula in the fiber variables gives
\begin{align}\label{eq: def-B-bilinear}
    G(x,z)=B(x,z)[z,z],
\end{align}
where
\begin{align}\label{eq: B-as-integral}
    B(x,z)
    :=
    \int_0^1(1-t)D_z^2G(x,tz)\,dt .
\end{align}
Since \(B\) is smooth and \(\overline A\) is compact, there exists \(c>0\) such that, for every
\(z\) with \(\|z\|_{C^{0,\alpha}(A)}\le \varepsilon\) (choosing $\varepsilon$ smaller if necessary),
\begin{align*}
    \|B(\cdot,z(\cdot))\|_{C^{0,\alpha}(A)}
    \le c.
\end{align*}
Indeed, in a local trivialization
\[
\begin{aligned}
|B(x,z(x))-B(y,z(y))|
&\le
|B(x,z(x))-B(y,z(x))| +
|B(y,z(x))-B(y,z(y))| \\
&\le
c d(x,y)+c|z(x)-z(y)| \\
&\le
c\bigl(1+[z]_{C^{0,\alpha}(A)}\bigr)d(x,y)^\alpha.
\end{aligned}
\]
Since \(\|z\|_{C^{0,\alpha}(A)}\le\varepsilon\), the right-hand side is bounded
by \(c d(x,y)^\alpha\).

Hence, using that
\(C^{0,\alpha}(A)\) is a Banach algebra,

\begin{align}\label{eq: Holder-estimate-Rf}
    \|R(f)\|_{C^{0,\alpha}(A)}
    &=
    \|G(\cdot,z_f(\cdot))\|_{C^{0,\alpha}(A)} =
    \|B(\cdot,z_f(\cdot))[z_f(\cdot),z_f(\cdot)]\|_{C^{0,\alpha}(A)} \\
    \nonumber
    &\le
    c\|z_f\|_{C^{0,\alpha}(A)}^2 \le
    c\|df\|_{C^{1,\alpha}(A)}^2.
\end{align}

For the second estimate, the fundamental theorem of calculus gives
\[
    G(x,z_f)-G(x,z_h)
    =
    L(x,z_f,z_h)[z_f-z_h],
\]
where
\[
    L(x,z_f,z_h)
    :=
    \int_0^1
    D_zG\bigl(x,z_h+t(z_f-z_h)\bigr)\,dt .
\]
Since \(D_zG(x,0)=0\), smoothness of \(G\) gives
\[
    \|L(\cdot,z_f(\cdot),z_h(\cdot))\|_{C^{0,\alpha}(A)}
    \le
    c\bigl(
        \|z_f\|_{C^{0,\alpha}(A)}
        +
        \|z_h\|_{C^{0,\alpha}(A)}
    \bigr).
\]
Therefore,
\[
\begin{aligned}
    \|R(f)-R(h)\|_{C^{0,\alpha}(A)}
    &\le
    c\|L(\cdot,z_f,z_h)\|_{C^{0,\alpha}(A)}
    \|z_f-z_h\|_{C^{0,\alpha}(A)} \\
    &\le
    c\bigl(
        \|z_f\|_{C^{0,\alpha}(A)}
        +
        \|z_h\|_{C^{0,\alpha}(A)}
    \bigr)
    \|z_f-z_h\|_{C^{0,\alpha}(A)} \\
    &\le
    c\bigl(
        \|df\|_{C^{1,\alpha}(A)}
        +
        \|dh\|_{C^{1,\alpha}(A)}
    \bigr)
    \|df-dh\|_{C^{1,\alpha}(A)}.
\end{aligned}
\]
\end{proof}

Next, we apply a rescaling argument to obtain the estimates on the whole cone. 

\begin{proposition}\label{prop: bound-Rf}
There are constants \(c,\varepsilon_0>0\) such that, if
\[
    \tnorm{df}_{1,\alpha;1}\le \varepsilon_0,
\]
then for every \(\mu\in\mathbb R\) we have
\[
    \tnorm{R(f)}_{0,\alpha;\mu}
    \le
    c\tnorm{df}_{1,\alpha;\frac{\mu}{2}+1}^{2}.
\]
\end{proposition}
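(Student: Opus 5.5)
We will derive the global weighted estimate from the fixed-annulus estimates of Lemma \ref{lem:fixed-annulus-R} by rescaling with Lemma \ref{lem:R-rescaling}. Fix $\sigma\in(0,\frac12]$ and set $\lambda:=2\sigma$, $f_\lambda:=\lambda^{-2}f\circ\delta_\lambda$. By Lemma \ref{lem:R-rescaling}, $R(f)\circ\delta_\lambda=R(f_\lambda)$ on $C_{\lambda^{-1}}\supset A$. The dilation $\delta_\lambda$ maps $A=(1/2,1)\times_r\Sigma$ onto the annulus $(\sigma,2\sigma)\times_r\Sigma$.

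The first step is to relate the unweighted norms on $A$ to the weighted seminorms at scale $\sigma$. The operators $r\partial_r$ and $\nabla^\Sigma$ are dilation invariant, and the H\"older seminorm picks up a factor $\lambda^\alpha$ under $\delta_\lambda$. This gives
\[
\|R(f)\|_{C^{0,\alpha}([\sigma,2\sigma])}\,\sigma^{-\mu}\;\simeq\;\sigma^{-\mu}\|R(f_\lambda)\|_{C^{0,\alpha}(A)},
\]
where the $\alpha$-seminorm weight $\sigma^{-\mu+\alpha}$ is exactly what absorbs the $\lambda^{\alpha}$ factor. Similarly, using $df_\lambda=\lambda^{-2}\delta_\lambda^*df$ and the fact that pointwise $g_C$-norms of one-forms scale by $\lambda^{-1}$ under pullback, we get
\[
\|df_\lambda\|_{C^{1,\alpha}(A)}\le c\,\lambda^{-1}\,\sigma^{\kappa}[df]_{1,\alpha;\kappa;\sigma'}
\]
for any weight $\kappa$. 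Here $\sigma'$ ranges over the one or two dyadic scales covering $\delta_\lambda(A)$. On $A$, $r\simeq1$, so ordinary covariant derivatives and $(r\partial_r,\nabla^\Sigma)$ derivatives are comparable with uniform constants. Concretely, we get $\|df_\lambda\|_{C^{1,\alpha}(A)}\le c\,\sigma^{\kappa-1}\tnorm{df}_{1,\alpha;\kappa}$.

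Next, we check the smallness hypothesis. Taking $\kappa=1$ gives $\|df_\lambda\|_{C^{1,\alpha}(A)}\le c\tnorm{df}_{1,\alpha;1}\le c\varepsilon_0$, uniformly in $\sigma$. After shrinking $\varepsilon_0$, Lemma \ref{lem:fixed-annulus-R} therefore applies at every scale, and \eqref{eq:fixed-annulus-R-quadratic} yields
\[
\sigma^{-\mu}\|R(f_\lambda)\|_{C^{0,\alpha}(A)}\le c\,\sigma^{-\mu}\|df_\lambda\|^2_{C^{1,\alpha}(A)}.
\]
Now take $\kappa=\frac\mu2+1$. Then $\|df_\lambda\|^2\le c\,\sigma^{\mu}\tnorm{df}^2_{1,\alpha;\mu/2+1}$, and the powers of $\sigma$ cancel exactly. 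Taking the supremum over $\sigma\in(0,\frac12]$ gives the claim.

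The main obstacle is the bookkeeping in the scaling step. One must verify that the weighted seminorm $[\cdot]_{k,\alpha;\nu;\sigma}$ on $[\sigma,2\sigma]$ corresponds, up to uniform constants, to the plain $C^{k,\alpha}$ norm on the fixed annulus after pullback, including the correct power $\lambda^{-1}$ for one-forms with the $\lambda^{-2}$ normalization. One must also handle the boundary scales $\sigma$ near $\frac12$, where $\delta_\lambda(A)$ reaches $r=1$. This is resolved by a fixed compact-set argument, using that $A$ may be slightly enlarged without changing the constants.
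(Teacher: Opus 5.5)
Your proposal is correct and follows essentially the same route as the paper: you rescale to the fixed annulus $A$ via Lemma \ref{lem:R-rescaling}, check the smallness hypothesis uniformly in the scale using the weight $1$, apply \eqref{eq:fixed-annulus-R-quadratic}, and use $\|df_\lambda\|_{C^{1,\alpha}(A)}\le c\,\lambda^{-1}[df]_{1,\alpha;0;\sigma}$ so that the powers of $\sigma$ cancel for the weight $\frac{\mu}{2}+1$. Your choice $\lambda=2\sigma$ makes $\delta_\lambda(A)$ coincide exactly with $(\sigma,2\sigma)$, a point the paper's write-up glosses over; as a result you do not need neighbouring dyadic scales, which would otherwise make the constant depend on $\mu$.
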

\begin{proof}
Recall that by Lemma \ref{lem:R-rescaling}
\[
    R(f)\circ\delta_\lambda=R(f_\lambda),
\]
where $f_\lambda$ is given by \eqref{eq: def-f-lambda}.
Thus, 
\[
    [R(f)]_{0,\alpha;0;\lambda}
    \le c\|R(f_\lambda)\|_{C^{0,\alpha}(A)}.
\]
Moreover,
\[
    \|df_\lambda\|_{C^{1,\alpha}(A)}
    \le
    c\lambda^{-1}[df]_{1,\alpha;0;\lambda}.
\]
Hence, if
\[
    \tnorm{df}_{1,\alpha;1}\le \varepsilon_0
\]
and \(\varepsilon_0\) is sufficiently small, Lemma
\ref{lem:fixed-annulus-R} applies to \(f_\lambda\) for every
\(\lambda\in(0,1/2]\): \eqref{eq:fixed-annulus-R-quadratic} implies that
\begin{align*}
    [R(f)]_{0,\alpha;0;\lambda}
    \le
    c\|df_\lambda\|_{C^{1,\alpha}(A)}^2 \le
    c\lambda^{-2}[df]_{1,\alpha;0;\lambda}^2.
\end{align*}
Multiplying by \(\lambda^{-\mu}\) and taking the supremum over \(\lambda\), we
obtain
\[
\begin{aligned}
    \tnorm{R(f)}_{0,\alpha;\mu}
    &=
    \sup_{\lambda\in(0,1/2]}
    \lambda^{-\mu}[R(f)]_{0,\alpha;0;\lambda} \le
    c\sup_{\lambda\in(0,1/2]}
    \lambda^{-\mu-2}[df]_{1,\alpha;0;\lambda}^2 \\
    &=
    c\sup_{\lambda\in(0,1/2]}
    \left(
        \lambda^{-\frac{\mu}{2}-1}[df]_{1,\alpha;0;\lambda}
    \right)^2 =
    c\tnorm{df}_{1,\alpha;\frac{\mu}{2}+1}^{2}.
\end{aligned}
\]
\end{proof}

\begin{proposition}\label{prop: contraction-estimate}
There are constants \(c,\varepsilon_0>0\) such that, if
\[
    \tnorm{df}_{1,\alpha;1},
    \tnorm{dh}_{1,\alpha;1}
    <
    \varepsilon_0,
\]
then for every \(\mu\in\mathbb R\) we have
\[
    \tnorm{R(f)-R(h)}_{0,\alpha;2\mu-2}
    \le
    c\big(
        \tnorm{df}_{1,\alpha;\mu}
        +
        \tnorm{dh}_{1,\alpha;\mu}
    \big)
    \tnorm{df-dh}_{1,\alpha;\mu}.
\]
\end{proposition}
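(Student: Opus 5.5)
The plan is to mirror the proof of Proposition \ref{prop: bound-Rf}: reduce to the fixed-annulus estimate \eqref{eq:fixed-annulus-R-lipschitz} of Lemma \ref{lem:fixed-annulus-R}, and transport it to every scale $\lambda\in(0,1/2]$ using Lemma \ref{lem:R-rescaling}. Since $R(f)\circ\delta_\lambda=R(f_\lambda)$ and $R(h)\circ\delta_\lambda=R(h_\lambda)$, we have
\begin{align*}
    [R(f)-R(h)]_{0,\alpha;0;\lambda}\le c\,\|R(f_\lambda)-R(h_\lambda)\|_{C^{0,\alpha}(A)}.
\end{align*}
The constant $c$ comes only from comparing the annulus $[\lambda,2\lambda]$ with its rescaling to $A$. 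The $C^{0,\alpha}$ seminorm scales correctly because the weighted seminorm carries the factor $\sigma^{\alpha}$.

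First, check that Lemma \ref{lem:fixed-annulus-R} applies to $f_\lambda$ and $h_\lambda$. As in the previous proof, $\|df_\lambda\|_{C^{1,\alpha}(A)}\le c\lambda^{-1}[df]_{1,\alpha;0;\lambda}\le c\tnorm{df}_{1,\alpha;1}\le c\varepsilon_0$, and likewise for $h$. So after shrinking $\varepsilon_0$ the smallness hypothesis holds uniformly in $\lambda$. Moreover $d f_\lambda-dh_\lambda=d(f-h)_\lambda$ by linearity of $f\mapsto f_\lambda$, so $\|df_\lambda-dh_\lambda\|_{C^{1,\alpha}(A)}\le c\lambda^{-1}[df-dh]_{1,\alpha;0;\lambda}$.

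Combining these with \eqref{eq:fixed-annulus-R-lipschitz} gives
\begin{align*}
    [R(f)-R(h)]_{0,\alpha;0;\lambda}\le c\,\lambda^{-2}\big([df]_{1,\alpha;0;\lambda}+[dh]_{1,\alpha;0;\lambda}\big)[df-dh]_{1,\alpha;0;\lambda}.
\end{align*}
Multiply by $\lambda^{-(2\mu-2)}$ and split the power $\lambda^{-2\mu}$ as $\lambda^{-\mu}\cdot\lambda^{-\mu}$, one factor for each bracket. Since $[\cdot]_{1,\alpha;0;\lambda}\lambda^{-\mu}=[\cdot]_{1,\alpha;\mu;\lambda}\le\tnorm{\cdot}_{1,\alpha;\mu}$, taking the supremum over $\lambda\in(0,1/2]$ yields the claim.

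The only delicate step is the scaling bookkeeping. We need $\|d f_\lambda\|_{C^{1,\alpha}(A)}\lesssim\lambda^{-1}[df]_{1,\alpha;0;\lambda}$, which holds because pulling back by $\delta_\lambda$ and dividing by $\lambda^2$ turns $|\nabla^k df|_{g_C}$ at radius $\lambda$ into $\lambda^{k-1}|\nabla^k df|$, matching the scale-invariant $(r\partial_r)^j(\nabla^\Sigma)^l$ derivatives in the weighted norm. We also need the constants to be uniform in $\lambda$, which holds by dilation invariance of the cone. Everything else is the same argument as Proposition \ref{prop: bound-Rf}.
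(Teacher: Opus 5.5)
Your proposal is correct and follows the paper's proof essentially line by line. Like the paper, you rescale via Lemma \ref{lem:R-rescaling} to the fixed annulus $A$, apply \eqref{eq:fixed-annulus-R-lipschitz} to $f_\lambda,h_\lambda$ using $\|df_\lambda\|_{C^{1,\alpha}(A)}\le c\lambda^{-1}[df]_{1,\alpha;0;\lambda}$, multiply by $\lambda^{-(2\mu-2)}$, and take the supremum over $\lambda\in(0,1/2]$; your extra checks (uniform smallness from $\tnorm{df}_{1,\alpha;1}$, linearity of $f\mapsto f_\lambda$, and the scaling of $|\nabla^k df|$) only make explicit what the paper leaves implicit.
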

\begin{proof}
For \(\lambda\in(0,1/2]\) and for $f_\lambda, h_\lambda$ as in \eqref{eq: def-f-lambda}, Lemma \ref{lem:R-rescaling} implies
\[
    (R(f)-R(h))\circ\delta_\lambda
    =
    R(f_\lambda)-R(h_\lambda).
\]
Therefore
\[
    [R(f)-R(h)]_{0,\alpha;0;\lambda}
    \le
    c\|R(f_\lambda)-R(h_\lambda)\|_{C^{0,\alpha}(A)}.
\]
Moreover,
\[
    \|df_\lambda\|_{C^{1,\alpha}(A)}
    \le
    c\lambda^{-1}[df]_{1,\alpha; 0;\lambda},
    \qquad
    \|dh_\lambda\|_{C^{1,\alpha}(A)}
    \le
    c\lambda^{-1}[dh]_{1,\alpha;0;\lambda}.
\]
As $\tnorm{df}_{1,\alpha;1},
    \tnorm{dh}_{1,\alpha;1}
    <
    \varepsilon_0$,
we can apply Lemma \ref{lem:fixed-annulus-R} to $f_\lambda$, $h_\lambda$: \eqref{eq:fixed-annulus-R-lipschitz} implies that
\begin{align*}
    [R(f)-R(h)]_{0,\alpha;0;\lambda}
    &\le
    c\bigl(
        \|df_\lambda\|_{C^{1,\alpha}(A)}
        +
        \|dh_\lambda\|_{C^{1,\alpha}(A)}
    \bigr)
    \|df_\lambda-dh_\lambda\|_{C^{1,\alpha}(A)} \\
    &\le
    c\bigl(
        \lambda^{-1}[df]_{1,\alpha;0;\lambda}
        +
        \lambda^{-1}[dh]_{1,\alpha;0;\lambda}
    \bigr)
    \lambda^{-1}[df-dh]_{1,\alpha;0;\lambda}.
\end{align*}
Multiplying by \(\lambda^{-(2\mu-2)}\), we obtain
\[
\begin{aligned}
    \lambda^{-(2\mu-2)}
    [R(f)-R(h)]_{0,\alpha;0;\lambda}
    &\le
    c\bigl(
        \lambda^{-\mu}[df]_{1,\alpha;0;\lambda}
        +
        \lambda^{-\mu}[dh]_{1,\alpha;0;\lambda}
    \bigr)
    \lambda^{-\mu}[df-dh]_{1,\alpha;0;\lambda}.
\end{aligned}
\]
Taking the supremum over \(\lambda\in(0,1/2]\), we conclude that
\[
    \tnorm{R(f)-R(h)}_{0,\alpha;2\mu-2}
    \le
    c\big(
        \tnorm{df}_{1,\alpha;\mu}
        +
        \tnorm{dh}_{1,\alpha;\mu}
    \big)
    \tnorm{df-dh}_{1,\alpha;\mu}.
\]
\end{proof}

Observe that, if $u,w\in C^{2,\alpha}_{\text{loc}}(C_{1})$ with $\tnorm{du}_{1,\alpha;\nu/2},\tnorm{dw}_{1,\alpha;\nu/2}<\infty$, then by
Proposition~\ref{prop: contraction-estimate} (with $\mu=\nu/2$) followed by \eqref{eq: est-u-CHS}, we have 
\begin{equation*}
\tnorm{\mathcal{P}_\psi(u)-\mathcal{P}_\psi(w)}_{2,\alpha; \nu}
\le c\tnorm{R(u)-R(w)}_{0,\alpha; \nu-2}
\le c(\tnorm {du}_{1,\alpha;\nu/2}+\tnorm{dw}_{1,\alpha;\nu/2})\tnorm{du-dw}_{1,\alpha;\nu/2}.
\end{equation*}
By \eqref{eq: est-fist-second-der} and \eqref{eq: comparison-exponents} (since $\nu > 2$), we have
\begin{equation}\label{eq:du-est}
\tnorm{d\mathcal{P}_\psi(u)-d\mathcal{P}_\psi(w)}_{1,\alpha;\nu-1}
\le c(\tnorm {du}_{1,\alpha;\nu-1}+\tnorm{dw}_{1,\alpha;\nu-1})\tnorm{du-dw}_{1,\alpha;\nu-1}.
\end{equation}

\subsection{The fixed point argument}\label{ssec: fixed-point-argument}

We can now give a proof of Theorem \ref{thm: SL-CHS}. 
\begin{proof}[Proof of Theorem \ref{thm: SL-CHS}]
    Let us first identify an appropriate complete metric space so that we may apply the Contraction Mapping Principle\footnote{The Contraction Mapping Principle says that, if $\mathcal{P}$ is a contraction map on a non-empty complete metric space $(X, d)$, then $\mathcal{P}$ has a unique fixed point \cite[Theorem 9.23]{Ru76}.} to $\mathcal{P}_\psi$. For $\psi\in C^{2,\alpha}(\Sigma)$ and $\nu \in (2,\infty) \setminus \Lambda_\Sigma$, let
\begin{align*}
X := X(\nu,\psi) = \Bigl\{& f \in C^{2,\alpha, \nu}(C_{1}): \Pi_\nu f=\Pi_\nu \psi\text{ on }\Sigma \Bigr\}.
\end{align*}
Then $X$ is a closed subset of $C^{2,\alpha,\nu}(C_1)$. Define a map $d_X: X \times X \rightarrow [0,\infty)$ by
\begin{align*}
    d_X(f,g) := \tnorm{df - dg}_{1,\alpha;\nu-1}.
\end{align*}
Clearly, $d_X$ is symmetric and satisfies the triangle inequality. If $d_X(f,g) = 0$, then $df- dg = 0$, implying $f - g = c_0 \in C^{2,\alpha,\nu}(C_1)$. Since $\nu > 2$, we know 
\[\lim_{r \rightarrow 0^+} (f-g)(r,y) = 0.\]
This forces $c_0 = 0$ implying $(X,d_X)$ is a metric space. We claim that $d_X$ is equivalent to the metric induced by
$\tnorm{\cdot}_{2,\alpha;\nu}$. 

First, by
Lemma~\ref{lem:derivative-weight},
\begin{align}\label{eq: equivalence-norms-1}
\tnorm{dh}_{1,\alpha;\nu-1}
\leq
c\tnorm{h}_{2,\alpha;\nu}
\qquad
\text{for all } h\in C^{2,\alpha,\nu}(C_1).
\end{align}
Conversely, since every $h\in C^{2,\alpha,\nu}(C_1)$ satisfies
$h\to0$ at the cone tip, for $(r,y)\in C_1\simeq (0,1]\times\Sigma$
we have
\begin{align*}
|h(r,y)|
&=
\left|\int_0^r \partial_s h(s,y)\,ds\right|  \leq
\int_0^r |\partial_s h(s,y)|\,ds \leq
c\tnorm{dh}_{1,\alpha;\nu-1}\int_0^r s^{\nu-1}\,ds  \leq
c r^\nu \tnorm{dh}_{1,\alpha;\nu-1}.
\end{align*}
Thus, the zeroth-order weighted part of $\tnorm{h}_{2,\alpha;\nu}$ is
controlled by $\tnorm{dh}_{1,\alpha;\nu-1}$. The remaining first- and
second-order weighted terms, together with the Hölder seminorms, are
controlled directly by the $C^{1,\alpha,\nu-1}$-norm of $dh$. Hence,
\begin{align}\label{eq: equivalence-norms-2}
\tnorm{h}_{2,\alpha;\nu}
\leq
c\tnorm{dh}_{1,\alpha;\nu-1}.
\end{align}

Applying \eqref{eq: equivalence-norms-1} and \eqref{eq: equivalence-norms-2} to $h=f-g$, we obtain
\begin{align*}
c^{-1}\tnorm{f-g}_{2,\alpha;\nu}
\leq
d_X(f,g)
\leq
c\tnorm{f-g}_{2,\alpha;\nu}.
\end{align*}
Since $X$ is closed in the Banach space $C^{2,\alpha,\nu}(C_1)$, it is
complete with respect to the metric induced by
$\tnorm{\cdot}_{2,\alpha;\nu}$. The equivalence above therefore implies
that $(X,d_X)$ is complete.

Fix $\varepsilon\in (0,\varepsilon_0)$ and consider the closed subset
\[
B_\varepsilon(X) :=\{f\in X:\ \tnorm{df}_{1,\alpha;\nu-1}\le \varepsilon\} \subset X.
\]
Then the subspace $(B_\varepsilon(X), d_X)$ of $(X, d_X)$ is complete. We will show that $\mathcal{P}_\psi$ is a contraction map on $B_\varepsilon(X)$. The proof of this claim is broken into two steps. \\

\noindent
\textbf{Step 1:} \textit{For $\varepsilon>0$ sufficiently small, $\mathcal P_\psi$ maps $B_\varepsilon(X)$ to itself.} \\

Let $u\in B_\varepsilon(X)$ and let $v=\mathcal P_\psi(u)$.  By \eqref{eq: est-u-CHS} and 
Proposition~\ref{prop: bound-Rf} (with $\mu=\nu-2$), we have
\[
\tnorm{v}_{2,\alpha;\nu}
\le c\Bigl(\tnorm{R(u)}_{0,\alpha;\nu-2}+\|\Pi_\nu \psi\|_{C^{2,\alpha}(\Sigma)}\Bigr)
\le c\Bigl(\tnorm{du}_{1,\alpha;\nu/2}^2+\|\Pi_\nu \psi\|_{C^{2,\alpha}(\Sigma)}\Bigr).
\]
As $\nu > 2$, by \eqref{eq: comparison-exponents} $\tnorm{du}_{1,\alpha;\nu/2}\leq c\tnorm{du}_{1,\alpha;\nu-1}$. Thus, by \eqref{eq: est-fist-second-der} 
\[
\tnorm{dv}_{1,\alpha;\nu-1}\le c\tnorm{v}_{2,\alpha;\nu}
\le c\Bigl(\varepsilon^2+\|\Pi_\nu \psi\|_{C^{2,\alpha}(\Sigma)}\Bigr).
\]
Note that, since the eigenfunctions $\phi_j$ of $\Delta_\Sigma$ are smooth, $\lVert \Pi_\nu\psi\rVert_{C^{2,\alpha}(\Sigma)}\leq c\lVert\psi\rVert_{C^{2,\alpha}(\Sigma)} $ for some constant $c(\Sigma,\nu)$ depending only on $\Sigma$ and $\nu$.
Therefore, choosing $\varepsilon>0$ and $\psi$ so that
\[
c(\varepsilon^2+c(\Sigma,\nu)\|\psi\|_{C^{2,\alpha}(\Sigma)})\le \varepsilon,
\]
we obtain $\mathcal P_\psi(B_\varepsilon(X))\subset B_\varepsilon(X)$. \\

\noindent \textbf{Step 2:} \textit{$\mathcal {P}_\psi$ is a contraction on $B_\varepsilon(X)$.} \\

If $u,w\in B_\varepsilon(X)$, then \eqref{eq:du-est} (together with  Lemma~\ref{lem:derivative-weight} and the fact that $\nu > 2$) gives
\[d_X(\mathcal  P_\psi(u), \mathcal P_\psi(w))
\le c(\tnorm{du}_{1,\alpha;\nu-1}+\tnorm{dw}_{1,\alpha;\nu-1})d_X(u,w)
\le 2c\varepsilon\,d_X(u,w).
\]
Thus, if $\varepsilon>0$ is chosen so that $2c\varepsilon<1$, $\mathcal{P}_\psi$ is a contraction on $B_\varepsilon(X)$. By the Contraction Mapping Principle, $\mathcal P_\psi$ has a unique fixed point $u\in B_\varepsilon(X)$. From the definition of $\mathcal{P}_\psi$, we have $\Delta u=-R(u)$ implying $\theta(u)=0$. Hence, $L_{du}$ is special Lagrangian.\\
Since $u\in X$, $\Pi_\nu(u\vert_\Sigma-\psi)=0$, therefore $u\vert_\Sigma=\psi+w$ for some $w\in W:=\ker \Pi_\nu$.
\end{proof}
\begin{remark}
Note that if $u \in B_\varepsilon(X)$, then $du$ decays like $O(r^{\nu-1})$ for $\nu - 1 > 1$. This guarantees that the solution submanifold $L_{du}$ is conically singular, modeled on the original cone $C_1$. 
\end{remark}

\section{A special Lagrangian bridge principle}
We prove a bridge principle for special Lagrangian cones in the spirit of \cite[Theorem 4.1]{Sm93}. Theorem \ref{thm: main-theorem} will then follow from the bridge principle after suitable Lagrangian bridges are built. Without loss of generality, we will assume all cones are ``unit length", i.e. they are truncated at distance one from the vertex. The reader should keep \cite{Sm87, Sm89, Sm93, Dim25} close by, as we will refer to them regularly.

Let $C_1(\Sigma^1), \ldots, C_1(\Sigma^N)$ be truncated special Lagrangian cones in $\C^m$ having Lagrangian angle $\theta_0$. Suppose that the $C_1(\Sigma^i)$ have been attached smoothly along their boundaries by Lagrangian bridges of width $O(\varepsilon)$ to form a new Lagrangian submanifold $L^\varepsilon$ with angle close to $\theta_0$, called the \emph{approximate solution}. The bridges joining the cones are called \emph{$\varepsilon$-bridges}, and will be denoted $\mathscr{B}_{\varepsilon}^l$ ($l = 1,\ldots, I$). With this notation, $L^\varepsilon$ can be written 
\[L^\varepsilon =\bigcup_{i=1}^N C_1(\Sigma^i) \cup \bigcup_{l = 1}^I \mathscr{B}_{\varepsilon}^l,\]
where $\{\mathscr{B}_{\varepsilon}^l\}_{l = 1}^I$ is any finite collection of $\varepsilon$-bridges joining the cones. We will set $\mathscr{B}_\varepsilon := \cup_{l=1}^I \mathscr{B}_{\varepsilon}^l$. Our goal is to perturb $L^\varepsilon$ in order to make it special Lagrangian.

For any $u \in C^{2,\alpha,\nu}(L^\varepsilon)$, let
\begin{align*}
    \Gamma^\varepsilon_{du}:=\{(x,du(x))\in T^\ast L^\varepsilon\vert x\in L^\varepsilon\}. 
\end{align*}
As $du$ is closed, $\Gamma^\varepsilon_{du}$ is a Lagrangian section of $T^\ast L^\varepsilon$. By Proposition \ref{prop:weinstein-approximate-solutions}, a neighborhood of the zero section in $T^\ast L^\varepsilon$ can be identified with a neighborhood of $L^\varepsilon$ (regarded as a subset of its normal bundle) by means of a symplectomorphism $\Psi$. Therefore, if $\tnorm{du}_{1, \alpha;\nu-1}$ is sufficiently small ($\nu > 2$), then $\Psi(\Gamma^\varepsilon_{du})$ is a Lagrangian submanifold of $\mathbb{C}^m$. We denote such submanifold by $L^\varepsilon_{du}$.
\begin{remark}\label{rmk: embeddedness}
    Note that if the truncated cones $C^1,...,C^N$ are disjoint, then the $\varepsilon$-bridges can be constructed in such a way that $L^\varepsilon$ is embedded. As $\Gamma^\varepsilon_{du}$ is a section in $T^\ast L^\varepsilon$ and $\Psi$ is a diffeomorphism onto its image, we deduce that in this case $L^\varepsilon_{du}$ is also embedded.
\end{remark}
Let $\Theta(du)(x)$ represent the pointwise Lagrangian angle of $L_{du}^\varepsilon$. Define the \emph{Lagrangian angle defect} $\Theta_0^\varepsilon$ by
\begin{equation}
    \Theta_0^\varepsilon(x) := \theta_{L^\varepsilon}(x) - \theta_0 \text{ in } L^\varepsilon,
\end{equation}
where $\theta_{L^\varepsilon}$ is the Lagrangian angle of $L^\varepsilon$. Then the condition that $L_{du}^\varepsilon$ is special Lagrangian with angle $\theta_0$ is equivalent to the condition that $du$ and $u$ solve
\begin{align}\label{eq: PDE-for-u}
    \Theta(du) = \theta_0 \text{ in }L^\varepsilon \Leftrightarrow \Delta_{L^\varepsilon}u=-\Theta_{0}^\varepsilon-R(u) \text{ in } L^\varepsilon,
\end{align}
where $R(u)$ is defined as in \eqref{eq: def-rest-term-angle}. Note that the argument of Proposition \ref{prop: linearization} remains true in this setting, thus $R(u)$ satisfies \eqref{eq: properties-of-R}.
Going forward, we will drop the $\varepsilon$ to write $\Delta$ and $\Theta_0$, and will denote the right-hand side of the second equation in \eqref{eq: PDE-for-u} by $f(u)$.

For any $\varepsilon > 0$, we define \[\Lambda_{L^\varepsilon} := \bigcup_{i = 1}^N \Lambda_{\Sigma^i}, \]
where $\Lambda_{\Sigma}$ is defined as in Appendix \ref{ssec: existence-uniqueness} (note that $\Lambda_{L^\varepsilon}$ only depends on the spectral properties of the links $\Sigma^i$). We solve \eqref{eq: PDE-for-u} as follows: for $\psi\in C^{2,\alpha}(\partial L^\varepsilon)$ supported away from the bridges, for $u\in C^{2,\alpha, \nu}(L^\varepsilon)$ in a suitable subspace to be determined (see \eqref{Kepsilon}), and for $\nu \in (2,\infty) \setminus \Lambda_{L^\varepsilon}$, define $\mathcal{P}_\psi(u)$ to be the unique solution $v$ of
\begin{align}
\begin{cases}
    \Delta v=f(u) & \text{in } L^\varepsilon,\\
    \Pi_\nu^\varepsilon v=\Pi_\nu^\varepsilon\psi & \text{on } \partial L^\varepsilon, \label{dp}
\end{cases}
\end{align}
guaranteed in Remark \ref{rmk: CHS}, where $\Pi^\varepsilon_\nu$ is the projection onto the $L^2$-orthogonal complement of finitely many elements in $L^2(\partial L^\varepsilon)$ (see \eqref{eq: epsilon-projection} for the precise definition).
The strategy is to show that for small boundary data relative to $\varepsilon$ and $\varepsilon$ sufficiently small, one can find a fixed point of the operator $\mathcal{P}_\psi$, i.e. a solution of \eqref{eq: PDE-for-u}, using the Schauder Fixed Point Theorem\footnote{The Schauder Fixed Point Theorem says that if $K$ is a non-empty convex compact subset of a Banach space $X$, and if $\mathcal{P} : K \rightarrow K$ is a
continuous map, then $\mathcal{P}$ has a fixed point \cite[Theorem 11.1]{GT01}.}. The special Lagrangian bridge principle can be stated quantitatively as follows.
\begin{theorem}\label{bridgeprinciple}
For any $\nu$ sufficiently large with $\nu, \nu+2\notin \Lambda_{L^\varepsilon}$ ($\nu > 2$), there exists a number $\varepsilon_0 > 0$ (depending on $\nu$) and a positive function $\overline{\varepsilon}$ defined on $(0, \varepsilon_0)$ with the following property: for any sequence $\{\varepsilon_k\}_{k\in \mathbb{N}}$ in $(0,\varepsilon_0)$ tending to zero, there exists a subsequence (not relabeled) such that for any $\psi \in C^{2,\alpha}(\partial L^{\varepsilon_k})$ supported away from the $\varepsilon_k$-bridges with $\|\psi\|_{C^{2,\alpha}} \leq \overline{\varepsilon}(\varepsilon_k)$, there is $u \in C^{2,\alpha,\nu}(L^{\varepsilon_k})$ solving $\mathcal{P}_\psi(u)=u$. Furthermore, $\tnorm{du}_{1,\alpha;\nu-1} \rightarrow 0$ as $\varepsilon_k \rightarrow 0$.
\end{theorem}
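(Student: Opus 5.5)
Set $\mathcal{P}_\psi(u)=v$, the solution of \eqref{dp} with right-hand side $f(u)=-\Theta_0^\varepsilon-R(u)$, and look for a fixed point by Schauder's theorem on a convex set
\[
K_\varepsilon:=\bigl\{u\in C^{2,\alpha,\nu}(L^\varepsilon):\ \Pi_\nu^\varepsilon u=\Pi_\nu^\varepsilon\psi \text{ on } \partial L^\varepsilon,\ \tnorm{u}_{2,\alpha;\nu}\le \beta(\varepsilon)\bigr\},
\]
with $\beta(\varepsilon)\to0$ to be chosen. We prove compactness by viewing $K_\varepsilon$ inside a weaker weighted space, e.g. $C^{2,\alpha',\nu'}$ with $\alpha'<\alpha$ and $\nu'<\nu$, where Arzelà–Ascoli on annuli together with uniform decay at the tips gives precompactness. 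The set is closed and convex.

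\textbf{Step 1: uniform linear estimates.} The key input is a linear estimate for the Dirichlet-type problem on $L^\varepsilon$, in the form of Remark \ref{rmk: CHS}:
\[
\tnorm{v}_{2,\alpha;\nu}\le c(\varepsilon)\Bigl(\tnorm{\Delta v}_{0,\alpha;\nu-2}+\|\Pi^\varepsilon_\nu v\|_{C^{2,\alpha}}\Bigr).
\]
Ideally $c$ is independent of $\varepsilon$. Following Smale, we would argue by contradiction. Normalize a failing sequence and pass to a limit as $\varepsilon_k\to0$; the bridges collapse, and the limit solves $\Delta v=0$ on the disjoint union of the cones with vanishing projected boundary data. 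This forces $v=0$ by Theorem \ref{thm: CHS-linear}. The exclusion $\nu,\nu+2\notin\Lambda_{L^\varepsilon}$ is needed here, via Fredholmness in $\rho^\delta L^2$ and $\rho^\delta H^2$ and the passage between weighted Sobolev and Hölder spaces.

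The subsequence in the statement arises naturally at this point. The Weinstein data, the approximate projections $\Pi_\nu^\varepsilon$ (which converge to $\oplus_i\Pi_\nu$ on the links), and the possible small eigenvalues near the bridges are controlled only along subsequences. This is why we allow $c(\varepsilon_k)$ and $\overline\varepsilon(\varepsilon_k)$ to depend on the subsequence.

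\textbf{Step 2: estimates for the defect and the remainder.} From the flat-bridge construction \eqref{bridgeest1} we have $\tnorm{\Theta_0^\varepsilon}_{0,\alpha;\nu-2}\le \tau(\varepsilon)\to0$. The defect vanishes near the tips, so any weight is allowed there, and it is $C^{0,\alpha}$-small on the bridges. For the remainder, Propositions \ref{prop: bound-Rf} and \ref{prop: contraction-estimate} on the conical ends, together with Lemma \ref{lem:fixed-annulus-R} on the compact part $A_0$, give
\[
\tnorm{R(u)}_{0,\alpha;\nu-2}\le c\tnorm{du}^2_{1,\alpha;\nu-1},
\]
plus the corresponding Lipschitz bound; this uses $\nu/2\le\nu-1$. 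These constants must be uniform in $\varepsilon$, which requires the Weinstein map of Proposition \ref{prop:weinstein-approximate-solutions} to have uniformly controlled derivatives on the flat bridges. Flatness is exactly what provides this.

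\textbf{Step 3: self-map, continuity, fixed point.} For $u\in K_\varepsilon$,
\[
\tnorm{\mathcal P_\psi u}_{2,\alpha;\nu}\le c(\varepsilon)\bigl(\tau(\varepsilon)+c\beta^2+\|\psi\|_{C^{2,\alpha}}\bigr).
\]
Choose $\beta(\varepsilon)=2c(\varepsilon)\tau(\varepsilon)$ and $\overline\varepsilon(\varepsilon)\le\tau(\varepsilon)$. Along the subsequence we need $c(\varepsilon)^2\tau(\varepsilon)$ small; that product tending to zero is what we must extract. Then $\mathcal P_\psi(K_\varepsilon)\subset K_\varepsilon$.

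Continuity in the weaker topology follows from the Lipschitz estimate for $R$ plus interpolation. Schauder's theorem then gives a fixed point, and $\tnorm{du}_{1,\alpha;\nu-1}\le c\beta(\varepsilon_k)\to0$.

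\textbf{Main obstacle.} The hardest step is Step 1, the uniformity of the linear constant under bridge degeneration. The quadratic remainder leaves no room for $c(\varepsilon)$ to blow up faster than $\tau(\varepsilon)^{-1/2}$. If full uniformity fails, we would first try to show that $c(\varepsilon_k)$ stays bounded along a subsequence, using the blow-up/contradiction argument, since the limit configuration has no kernel.
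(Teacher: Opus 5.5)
\textbf{There is a genuine gap in your Step 1, and it breaks the closing condition $c(\varepsilon)^2\tau(\varepsilon)\to0$ in Step 3.}

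The norm $\tnorm{\cdot}_{2,\alpha;\nu}$ contains the unweighted $C^{2,\alpha}(A_0)$-norm, and $A_0$ contains the bridges. The geometry there degenerates: the strip of width $\varepsilon$ meets $\Sigma^i$ along a re-entrant edge that is smoothed only at scale $\varepsilon$, which is what \eqref{bridge2} records. Because of this, a uniform constant should fail in general, by the following heuristic.

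\begin{itemize}
\item All admissible boundary data are supported away from $B_{2\eta}(p)$, so $v$ vanishes on $\partial L^\varepsilon$ near the attaching point $p$.
\item Suppose $v$ has size one on the cone with $\partial_\nu v(p)\neq0$. Near the edge, $v\approx\varepsilon\,\tilde w(\cdot/\varepsilon)$, where $\tilde w$ is a non-affine harmonic function on the rescaled junction domain.
\item This gives $|\nabla^2 v|\sim\varepsilon^{-1}$ and $[\nabla^2 v]_{\alpha}\sim\varepsilon^{-1-\alpha}$ there.
\end{itemize}

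Your contradiction argument cannot detect this. A normalized sequence can concentrate at the junctions while its limit on $C^1\cup C^2$ vanishes. For the same reason, passing to a subsequence does not bound the constants.

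The constants you can actually prove come from Propositions \ref{c0} and \ref{prop: globalschauder}, and they are of order $\varepsilon^{-2-\alpha}$. On the other side, \eqref{bridgeest1} gives at best $\tau(\varepsilon)\sim\varepsilon^{3-\alpha}$ for the H\"older norm of $\Theta_0^\varepsilon$. Then $c(\varepsilon)^2\tau(\varepsilon)\sim\varepsilon^{-1-3\alpha}$ diverges, and $\mathcal{P}_\psi$ need not map your ball into itself. Measuring the data only in $C^{0,\alpha}$ discards the one feature that beats the $\varepsilon$-losses.

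\textbf{The missing idea} is that $\Theta_0^\varepsilon$ is supported on a set of volume $\sim\varepsilon^{m-1}$. Its $L^p$ and $\rho^{\delta}L^2$ norms are therefore far smaller than its H\"older norm, and these should be fed into estimates that \emph{are} uniform in $\varepsilon$. This is what the paper does in the proof of Proposition \ref{prop: fixed-point-theorem}:

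\begin{itemize}
\item It splits $v=v_1+v_2$. Here $v_2=\mathcal{B}(u,\psi)$ is a finite-dimensional correction built from the kernel $K_H^{L^{\varepsilon_k}}(-\delta,\Delta)$, and $v_1$ solves a zero-Dirichlet problem.
\item Smale's uniform weighted $L^2$ estimate \cite[Lemma 4.1]{Sm93} gives $\|v_1\|_{0;\delta+2}\lesssim\varepsilon^{(m+5)/2}$.
\item The Michael--Simon-based bound of Proposition \ref{c0}, plus local estimates, gives $\|v_1\|_{C^0(L^\varepsilon_{1/4})}\lesssim\varepsilon^4$.
\item Only then are the lossy Schauder estimates applied. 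They yield $\|dv_1\|_{C^1}\lesssim\varepsilon^2$ and $\|dv_1\|_{C^{1,\alpha}}\lesssim\varepsilon^{2-\alpha}$, and the conical theory handles the tips.
\end{itemize}

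Correspondingly, the invariant set $\mathscr{K}(\varepsilon,t)$ is defined by three bounds at different $\varepsilon$-scales. One of them is the weighted Sobolev bound $\|du\|_{1;\delta+1}\le\varepsilon^{\frac{m+2}{2}-t}$. Interpolating it with the $C^0$ bounds is what lets the quadratic term $R(u)$ be estimated in $\rho^\delta L^2$ and in $L^p$.

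Finally, the subsequence in the statement has nothing to do with linear H\"older constants. It is chosen so that the orthonormal kernel bases $w_i^k$ converge on the cones. This makes $\Pi_\nu^{\varepsilon_k}$, $e_i^k$ and $F^{\varepsilon_k}$ uniformly bounded, as in \eqref{eq: uniform-estimate-eik} and \eqref{eq: holder-bound-bdry}.
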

\begin{remark}[Uniqueness]
    Let $u \in C^{2,\alpha,\nu}(L^\varepsilon)$ be such that $\mathcal{P}_\psi(u)=u$. Since $L_{du}^\varepsilon$ is special Lagrangian with angle $\theta_0$, it is calibrated by $\Re(e^{-i\theta_0}\Omega)$. Hence, \cite[Theorem 1.2]{DL225} shows that $L_{du}^\varepsilon$ is the (global) unique solution to the Plateau problem in the class of integral currents in $\C^m$ spanned by its boundary. However, we cannot guarantee that $u$ uniquely solves the fixed point problem for $\mathcal{P}_\psi$ in $C^{2,\alpha,\nu}(L^\varepsilon)$ since only the projection of the boundary data onto a fixed finite codimensional subspace is prescribed a priori, while the remaining finite dimensional boundary component depends on $u$.
\end{remark}

Of course, we cannot expect to work with arbitrary $\varepsilon$-bridges. In order to run the fixed point argument, we require that
\begin{equation}
        c^{-1}\varepsilon^{m-1} \leq \vol (\mathscr{B}_\varepsilon) \leq c \varepsilon^{m-1}, \label{bridge1}
\end{equation}
where $c$ is a constant independent of $\varepsilon$. Condition \eqref{bridge1} helps ensure that bridge effects introduced during gluing are negligible for small $\varepsilon$. Our constructed bridges will satisfy
\begin{equation}
    \|A_{\partial \mathscr{B}_\varepsilon}\|_{C^0(\partial \mathscr{B}_\varepsilon)} \leq c\varepsilon^{-1}, \label{bridge2}
\end{equation}
where $A_{\partial \mathscr{B}_\varepsilon}$ is the second fundamental form of $\partial \mathscr{B}_\varepsilon$. The boundary estimate \eqref{bridge2} is a result of a smoothing procedure near the patching regions where the bridges meet the cone boundaries, which introduces regions of large curvature relative to $\varepsilon$ (see \cite[Section 2]{Sm87}).

Notice that, in the bridge case, the term $\Theta_0$ may not be zero. We will assume 
\begin{equation}
    \|\Theta_0\|_{C^0(L^\varepsilon)} \leq c \varepsilon^3, \text{ } \|\nabla \Theta_0\|_{C^0(L^\varepsilon)} \leq c\varepsilon^2, \text{ and }  \Big(\int_{L^\varepsilon} |\Theta_0|^{p} \, d\vol_{L^\varepsilon}\Big)^{\frac{1}{p}} \leq c \varepsilon^{\frac{m - 1 + 3p}{p}} \text{ for each } p \geq 1.\label{bridgeest1}
\end{equation}
Condition \eqref{bridgeest1} says that $L^\varepsilon$ is ``approximately special Lagrangian" with angle $\theta_0$, justifying the term ``approximate solution." Notice that, in contrast to the assumed estimates for the mean curvature in \cite{Sm89, Sm93}, we require a $C^1$ estimate for $\Theta_0$ in terms of $\varepsilon$. This additional hypothesis will help us complete the fixed point argument in the special Lagrangian setting. In Section \ref{section: approxsoln}, we show that $L^\varepsilon$ can always be constructed satisfying \eqref{bridge1}, \eqref{bridge2}, and \eqref{bridgeest1}. This is achieved using flat Lagrangian bridges based on those in \cite[Section 7.2]{Dim25}.

The chapter is organized as follows. In Subsection \ref{section: approxsoln}, we construct Lagrangian approximate solutions with flat bridges satisfying \eqref{bridge1}, \eqref{bridge2}, and \eqref{bridgeest1}. Subsection \ref{section: schauder} consists of the supremum and Schauder estimates (both local and global) we will use to run the fixed point argument. Finally, Subsection \ref{ssec: fixed-point-argument2} is dedicated to the fixed point argument.

\subsection{Construction of approximate solutions}\label{section: approxsoln}
To extend Smale's bridge principle to special Lagrangian cones, we need to demonstrate that we can build suitable Lagrangian bridges. It turns out that the flattening procedure developed in \cite[Section 7.2]{Dim25} does the trick, since the tangent plane to a special Lagrangian cone with Lagrangian angle $\theta_0$ is itself special Lagrangian with angle $\theta_0$. For Smale's original bridge constructions, see \cite[Section 2]{Sm87}. To help guide the reader, we begin with a short sketch of the basic bridge construction, focusing on the case of two cones.

Given two truncated special Lagrangian cones with Lagrangian angle $\theta_0$, we position them so that they are disjoint, they lie tangent to the same special Lagrangian plane (with angle $\theta_0$), the ray of tangency of each cone with the plane coincide, and the cones open toward each other. We then extend the cones via dilation in an $\varepsilon$-neighborhood of the points on the links of the cones making contact with the shared tangent plane, and carefully flatten the extended portion onto the plane. As in \cite[Section 7.2]{Dim25} when $n = 3$, this allows us to join the cones using a thin strip (i.e. a \emph{race-car track}) lying in the plane which is a fattening of the ray of tangency along its orthogonal complement in the plane.

We now demonstrate how to rigorously perform the flattening. Fix a truncated special Lagrangian cone $C_1 := C_1(\Sigma) \subset \C^m$. Using $\C^m \simeq \R^{2m}$, up to a translation we can view $C_1$ as an $m$-dimensional cone in $\R^{2m}$ with vertex at the origin. Choose coordinates $(x,z) \in \R^{2m} \simeq \R^m \times \R^m$ so that $C_1$ lies tangent to the $x$-subspace, $x := (x_1,\ldots, x_m)$, with the $x_m$-axis coinciding with the ray of tangency of $C_1$ with the $x$-subspace. 

Set $x := (\tilde{x}, x_m) \in \R^{m-1} \times \R$.
Note that, since $C_1$ is Lagrangian and its tangent plane at $p:=(0,...,0,1)$ is $\mathbb{R}^m$, there is a 2-homogeneous map $u$ defined in a neighborhood of $p$ in $\mathbb{R}^m$ such that the graph of $\nabla u$ coincides with $C(\Sigma)$ (the extended cone) around $p$. Then $u$ must have the form 
\begin{equation*}
    u: \{|\tilde{x}| \leq \varepsilon\} \times (1-\delta, 1+ \delta) \rightarrow \R, \qquad u(\tilde{x}, x_m) := (x_m)^2 v\Big(\frac{\tilde{x}}{{x_m}}\Big),
\end{equation*}
where $\delta$ is independent of $\varepsilon$. Then, near $p$, the graph of $\nabla u$ is a thin wedge $\mathscr{W}$ of $C$ centered about the $x_m$-axis.

We may assume without loss of generality that $u(p) = 0$ since $u$ is a potential. By construction, we have
\begin{equation}
    v(0) = 0, \text{ } \nabla v(0) = 0, \text{ and } D^2 v(0) = 0. \label{3zeros}
\end{equation}
One can check by direct computation using \eqref{3zeros} that 
\begin{equation*}
    u(0,x_m) = 0, \text{ } 
    \nabla  u(0, x_m) = 0, \text{ and } \partial_{x_i}\partial_{x_j}u(0,x_m)=0 \text{ for } i,j\in \{1,...,m-1\}.
\end{equation*}
As a consequence, we deduce
\begin{align*}
    u(x) &\sim O(|\tilde{x}|^3) \\
    \nabla u(x) &\sim O(|\tilde{x}|^2) \\
    \partial_{x_m} u(x) &\sim O(|\tilde{x}|^3)\\
    D^2 u(x) &\sim O(|\tilde{x}|) \\
    \partial_{x_i x_m}^2 u(x) &\sim O(|\tilde{x}|^2) \text{ for } i = 1,\ldots, m-1, \\ 
    \partial_{x_m x_m}^2 u(x) &\sim O(|\tilde{x}|^3).
\end{align*}
Furthermore, $u$ solves the special Lagrangian equation with zero right hand side since its gradient graph $\mathscr{W}$ is minimal and its tangent plane, the $x$-subspace, has Lagrangian angle zero. Expanding as a Taylor series for $\varepsilon$ small gives
\[0 = \arctan(D^2u(x)) = \sum_{j = 1}^m \arctan(\lambda_j(D^2u(x))) =  \Delta u(x) + O(|D^2 u(x)|^3),\]
where the $\lambda_j(D^2u(x))$ represent the eigenvalues of $D^2 u(x)$. Re-arranging terms and using that $D^2u(x)\sim O(\lvert\tilde x\rvert)$ shows that
\[\Delta u(x) \sim O(|\tilde{x}|^3).\]

Let $\phi : \R \rightarrow \R$ be a smooth function that is $1$ on $[1-\delta, 1]$ and decreases from $1$ to $0$ on $[1, 1+\delta]$. Replace $\mathscr{W}$ by the graph of $w: = \nabla(\phi(x_m)u(x))$ and let $\mathscr{P}_\varepsilon$ be the graph of the restriction of $w$ in the $x_m$ variable to $[1, 1+\delta]$ (i.e. the \emph{patching region}). Then $\partial(C_1 \cup \mathscr{P}_\varepsilon)$ is smooth everywhere, except possibly at points on $\Sigma \cap \partial \mathscr{P}_\varepsilon$. After further restriction of the domain of $w$, we may assume $C_1\cup \mathscr{P}_\varepsilon$ is smooth up to the boundary (see \cite[Section 2]{Sm87} and \cite[Section 7.2]{Dim25}). We have
\[\Delta (\phi(x_m)u(x)) = \phi(x_m)\Delta u(x) + 2\phi^{ \prime}(x_m) \partial_{x_m} u(x) + \phi^{\prime \prime}(x_m)u(x). \]
Substituting the estimates derived for $u$ and its derivatives in the previous paragraph into the expression above, we find
\[\Delta(\phi(x_m)u(x)) \sim O(|\tilde{x}|^3).\]

Define
\[
\Theta((\phi u)(x)) = \sum_{j = 1}^m \arctan(\lambda_j(D^2(\phi u)(x))) =  \Delta (\phi u)(x) + O(|D^2 (\phi u)(x)|^3),\]
to be the Lagrangian angle of the graph of $\phi u$. From the previous computations, $D^2((\phi u)(x)) \sim O(|\tilde{x}|)$. We deduce that 
\begin{equation}
    \Theta((\phi u)(x)) \sim O(|\tilde{x}|^3) \text{ on } \mathscr{P}_\varepsilon. \label{bridgeest}
\end{equation}
We can now derive an $L^p$ estimate for $\Theta(\phi u)$ on $\mathscr{P}_\varepsilon$ using the pointwise bound on $\Theta(\phi u)$ and the fact that the $m$-dimensional volume of $\mathscr{P}_\varepsilon$ and the domain of $u$ are comparable:
\begin{equation}
    \Big(\int_{\mathscr{P}_\varepsilon} |\Theta(\phi u)|^p \Big)^{\frac{1}{p}} \leq c \varepsilon^{\frac{m-1+3p}{p}}. \label{slagest}
\end{equation}
The estimates \eqref{bridgeest} and \eqref{slagest} will imply the $C^0$ and $L^p$ bounds on $\Theta$ in \eqref{bridgeest1}.

The next step is to prove the $C^1$ bound for $\Theta$ in \ref{bridgeest1}. Let $H$ be the mean curvature for $\mathscr{P}_\varepsilon$. Since $H = J\nabla \Theta$ (see Remark \ref{rmk: mean-curvature-lagrangian-angle}), it suffices to prove the desired bound for $H$ replacing $\nabla \Theta$. The wedge $\mathscr{W}$ is minimal (again by Remark \ref{rmk: mean-curvature-lagrangian-angle}) implying $\nabla u$ solves the minimal surface system \cite[Chapter 11]{GM12}
\[g_{C}^{ij}(x)\partial_{x_ix_j}^2 u_{x^\beta}(x) = 0 \text{ for each } \beta = 1,\ldots, m\]
where $g_{C} = I + (D\nabla u)^T(D\nabla u)$ is the natural metric on the cone. We can then expand the left-hand side as a Taylor series to obtain
\[0 = \Delta \nabla u(x) + O(|D^2u(x)|^2) \Rightarrow \Delta \nabla u(x) \sim O(|\tilde{x}|^2).\]
Here, we used the fact that $D^3u$ is bounded.
Now, $\mathscr{P}_\varepsilon$ is parameterized by $\psi(x) := (x, w(x))$ where
\[w(x) = (\phi(x_m)\nabla_{\tilde{x}}u(x), \phi^\prime(x_m) u(x) + \phi(x_m) \partial_{x_m}u(x)),\]
and $H$ is defined by the formula \cite[eq. (11.7), p. 298]{GM12}
\[H(x) = \big(g^{ij}(x) \partial_{x_i x_j} \psi(x)\big)^{\perp},\]
where $g := I + Dw^T Dw$ is the metric on $\mathscr{P}_\varepsilon$. Since $\partial_{x_i x_j}(x^\beta) = 0$ for each $i,j$ and each $\beta = 1,\ldots, m$, it suffices to estimate $g^{ij}\partial_{x_i x_j} w^\beta$ for each $\beta$. Expanding as a Taylor series once more, we find
\[g^{ij}(x)\partial_{x_i x_j} w^\beta(x) = \Delta w^\beta(x) + O(|Dw(x)|^2).\]
Due to the estimates for $u$ and its derivatives, we know $O(|Dw(x)|^2) = O(|\tilde{x}|^2)$. Thus, we only need to estimate the Laplacian term. 

Direct computation shows that for $\beta = 1,\ldots, m-1$ we have 
\[\Delta w^\beta(x) = \phi^{\prime \prime}(x_m)\partial_{x^\beta}u(x) + 2\phi^\prime(x_m) \partial_{x^\beta x_m}u(x) + \phi(x_m) \Delta \partial_{x^\beta}u(x).\]
Each term on the right-hand side above is $O(|\tilde{x}|^2)$ by previous estimates, implying the same is true for $\Delta w^\beta$ when $\beta = 1,\ldots, m-1$. When $\beta = m$, we get 
\begin{align*}
    \Delta w^m(x) &= \phi^\prime(x_m) \Delta u(x) + 2 \phi^{\prime \prime}(x_m)\partial_{x_m} u(x) + \phi^{\prime \prime \prime}(x_m)u(x) \\
    &\qquad+ \phi(x_m) \Delta \partial_{x_m}u(x) + 2\phi^\prime(x_m) \partial_{x_m}^2 u(x) + \phi^{\prime \prime}(x_m) \partial_{x_m} u(x).
\end{align*}
Again, each term  is at least $O(|\tilde{x}|^2)$ by prior estimates. We conclude $\Delta w(x) \sim O(|\tilde{x}|^2)$. In particular, the desired estimate holds for $H$ on $\mathscr{P}_\varepsilon$, and thus for $\nabla \Theta$. 

Suppose now that we are given two truncated special Lagrangian cones $C_1(\Sigma^1)$ and $C_1(\Sigma^2)$ in $\C^m$ with a common tangent plane $T$. Then we can lie them both tangent to the same $m$-plane, flatten them onto the plane as above, and connect them by a race-car track $\mathscr{T}_\varepsilon$ in $T$ of width $\varepsilon$ as described in the second paragraph in this section. The plane of tangency is special Lagrangian with the same Lagrangian angle as the cones. Then $\mathscr{B}_\varepsilon := \mathscr{P}_\varepsilon \cup \mathscr{T}_\varepsilon$ is an $\varepsilon$-bridge from $C_1(\Sigma^1)$ to $C_1(\Sigma^2)$. Since $\Theta \equiv 0 \equiv H$ on the cones and on $\mathscr{T}_\varepsilon$, the estimates \eqref{slagest} hold with $L^\varepsilon$ replacing $\mathscr{P}_\varepsilon$. Proceeding by induction, it is clear that this construction can be done for any finite list of special Lagrangian cones $C_1(\Sigma^1), \ldots, C_1(\Sigma^N)$ pairwise sharing a common tangent plane. 

For a simple explicit construction, if each of the cones $C_1(\Sigma^i)$ ($i = 1,\ldots, N$) lie tangent to the same $m$-plane, then we can flatten a small portion of their boundaries onto the plane as above and join the cones by race-car tracks meeting in a roundabout of radius $\varepsilon$ having $N$ exits. Since $\Theta \equiv 0$ and $H \equiv 0$ outside of the patching region $\mathscr{P}_\varepsilon := \cup_{l = 1}^N \mathscr{P}_{\varepsilon}^l$ on the approximate solution $L^\varepsilon$ constructed in this manner, the bounds on $\Theta$ and $H$ do not change. Finally, by restriction we can construct the approximate solutions so that $L^{\varepsilon_1} \subset L^{\varepsilon_2}$ whenever $\varepsilon_1 < \varepsilon_2$ as in \cite{Sm87, Dim25} --- hence, the conical Weinstein neighborhood does not vary for $\varepsilon < \varepsilon_0$.

\begin{remark}
    Note that Smale's bridges in \cite{Sm87} and \cite{Sm89} do not need to be flat, while in \cite{Dim25} the bridges are only required to be flat near the patching region in low dimensions and when building graphs in dimensions $m \ge 4$ in order to avoid excess curvature concentration in $\mathscr{P}_\varepsilon$ and preserve graphicality. The present construction is most similar to the case $n =3 $ in \cite[Section 7.2]{Dim25}.
\end{remark}

\subsection{Schauder estimates}\label{section: schauder}
We record the local and global estimates needed to run the bridge principle algorithm in the special Lagrangian setting. The local estimates are standard and can be found in \cite[Proposition 3.1]{Sm89} or \cite[Proposition 4.1]{Sm93}, which are taken from \cite{GT01}.
\begin{proposition}[Local Estimates]\label{prop: localest}
    Let $K \subset L^\varepsilon \setminus \sing(L^\varepsilon)$ be a compact subset and suppose $u \in C^{2,\alpha}(K)$ solves $\Delta u = f$ in $K$. Assume $B_{2r} \subset K$ is a geodesic ball of radius $2r$. Then there are constants $c(p)$, $c(\alpha)$, and $c$ such that
    \begin{itemize}
        \item[(a)] $\| u\|_{C^0( B_r)} \leq c(p)\big( r^{-\frac{m}{2}}\| u \|_{L^2(B_{2r})} + r^{2-\frac{m}{p}}\|f \|_{L^p(B_{2r})} \big)$, where $p > m/2$.
        \item[(b)] $\| \nabla^k u \|_{C^0( B_{r})} \leq c\big(r^{-k}\|u\|_{C^0( B_{2r})} +  r^{2-k}\|f\|_{C^{0,\alpha}(B_{2r})}\big)$ for $k = 1,2$.
        \item[(c)] $[\nabla^k u]_{(\alpha); B_{r}} \leq c(\alpha)\big(r^{-k-\alpha}\| u \|_{C^0(B_{2r})} + r^{2 - k -\alpha}\|f \|_{C^{0,\alpha}(B_{2r})}\big)$ for $k = 0,1,2$.
    \end{itemize}
\end{proposition}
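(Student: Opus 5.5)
The plan is to reduce to the classical interior estimates for uniformly elliptic equations on Euclidean balls, as in \cite[Chapters 3, 6, 8]{GT01}, with the geometry of $L^\varepsilon$ entering only through the metric coefficients. Because $K$ is a compact subset of $L^\varepsilon\setminus\sing(L^\varepsilon)$, the injectivity radius and the sectional curvature are bounded on a neighborhood of $K$. We may therefore assume $r$ is small compared with these quantities; if it is not, we cover by finitely many small balls, and the constants then depend on $K$. On $B_{2r}$ we use geodesic normal coordinates centered at the center of the ball. In these coordinates $\Delta_{L^\varepsilon}=g^{ij}\partial_{ij}+b^i\partial_i$, with $g^{ij}$ uniformly elliptic and $g^{ij}$, $b^i$ bounded in $C^{0,\alpha}$ on the Euclidean ball of radius $2r$.

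The first step is to normalize by rescaling. Set $\tilde u(y):=u(\exp(ry))$ on the unit-scale ball $B_2$. Then $\tilde u$ solves $\tilde g^{ij}\partial_{ij}\tilde u+r\tilde b^i\partial_i\tilde u=r^2\tilde f$. The rescaled coefficients have ellipticity and $C^{0,\alpha}$ bounds that are uniform in $r\le r_0$; rescaling only improves the Hölder seminorms of the coefficients, by a factor $r^\alpha$. With this normalization every claimed power of $r$ is forced by scaling. For instance, $\|\tilde u\|_{L^2(B_2)}\approx r^{-m/2}\|u\|_{L^2(B_{2r})}$, $\|r^2\tilde f\|_{L^p(B_2)}\approx r^{2-m/p}\|f\|_{L^p(B_{2r})}$, $\|D^k\tilde u\|_{C^0}=r^k\|\nabla^k u\|_{C^0}$ (up to bounded Christoffel corrections for $k=2$), and $[D^k\tilde u]_\alpha\approx r^{k+\alpha}[\nabla^k u]_\alpha$. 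It therefore suffices to prove (a)–(c) when $r=1$.

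At unit scale the three estimates are standard. For (a), we apply the local maximum principle \cite[Theorem 8.17]{GT01} (or the $L^2\to L^\infty$ Moser bound with right-hand side in $L^p$, $p>m/2$). This requires writing the operator in divergence form, $\Delta_g u=|g|^{-1/2}\partial_i(|g|^{1/2}g^{ij}\partial_j u)$, which is possible because the metric is smooth. For (b) and (c), we use the interior Schauder estimate \cite[Theorem 6.2 / Corollary 6.3]{GT01}, $\|\tilde u\|_{C^{2,\alpha}(B_1)}\le c(\|\tilde u\|_{C^0(B_2)}+\|r^2\tilde f\|_{C^{0,\alpha}(B_2)})$. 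This single estimate controls $\|D^k\tilde u\|_{C^0}$ for $k=1,2$ and $[D^k\tilde u]_\alpha$ for $k=0,1,2$. Scaling back, and using $\|r^2\tilde f\|_{C^{0,\alpha}(B_2)}\le c\,r^2\|f\|_{C^{0,\alpha}(B_{2r})}$ for $r\le1$, gives (b) and (c). The one delicate point is that the weights $r^{-k}$ and $r^{-k-\alpha}$ come out exactly. To see this, note that the factor in front of $\|f\|_{C^{0,\alpha}}$ in (b)–(c) is $r^{2-k}$ and $r^{2-k-\alpha}$ respectively, which is at least the scaled one, and this is absorbed when $r\le 1$.

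I expect the main obstacle to be uniformity rather than any single estimate. We need constants independent of the ball, and the natural way to get them is a uniform lower bound on the injectivity radius and uniform $C^{1,\alpha}$ bounds on the metric in normal coordinates over a neighborhood of $K$. If uniformity in $\varepsilon$ is also wanted, one would additionally invoke the nesting $L^{\varepsilon_1}\subset L^{\varepsilon_2}$ together with the curvature bound \eqref{bridge2}. Here, however, only the constants stated are claimed, and compactness of $K$ suffices.
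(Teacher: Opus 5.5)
Your proposal is correct and follows essentially the paper's approach. The paper gives no separate proof and simply cites \cite[Proposition 3.1]{Sm89}, \cite[Proposition 4.1]{Sm93} and \cite{GT01}; your argument (rescale to unit scale in normal coordinates, then apply the local maximum principle \cite[Theorem 8.17]{GT01} for (a) and the interior Schauder estimates for (b)--(c), absorbing the extra powers of $r$ using $r\le 1$) is the standard derivation behind those citations, and letting the constants depend on the geometry near $K$ is harmless because the paper only applies these estimates on fixed annuli inside the cones, or on fixed annuli after rescaling.
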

Both global $C^0$ and Schauder estimates are needed on the truncations $L_\sigma^\varepsilon$. The global $C^0$ estimate follows from the proof of \cite[Theorem 8.16]{GT01} applied to $\Delta$, if the Euclidean Sobolev inequality is replaced by the Michael-Simon inequality \cite{MS73} --- whose constant is independent of $\varepsilon$.
\begin{proposition}[Global $C^0$ bound] \label{c0}
    Suppose $\Delta u = f$ in $L_\sigma^\varepsilon$ for some $u \in C_0^{2}(L_\sigma^\varepsilon)$. Then there is a constant $c:= c(p,\sigma)$ such that, for any $p > m/2$, we have
    \[\|u\|_{C^0(L_\sigma^\varepsilon)} \leq c\|f\|_{L^p(L_\sigma^\varepsilon)}.\]
\end{proposition}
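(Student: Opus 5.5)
We follow the De Giorgi--Nash--Moser style argument of \cite[Theorem 8.16]{GT01}, with Euclidean Sobolev replaced by the Michael--Simon inequality \cite{MS73}. The key point is that, since $L^\varepsilon_\sigma$ is a submanifold of $\C^m$, the Michael--Simon constant depends only on $m$. There is also a mean curvature term. The submanifold $L^\varepsilon_\sigma$ is minimal on the cones and the track, and on the patching region $H=J\nabla\Theta_0$ satisfies $|H|\le c\varepsilon^2$ by \eqref{bridgeest1}. So $\|H\|_{C^0}$ is uniformly bounded for $\varepsilon<\varepsilon_0$, while $\vol(L^\varepsilon_\sigma)$ is bounded above and below independently of $\varepsilon$ by \eqref{bridge1}. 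The resulting Sobolev inequality for $w\in C^1_0(L^\varepsilon_\sigma)$ is
\[
\Big(\int |w|^{\frac{m}{m-1}}\Big)^{\frac{m-1}{m}}\le c(m)\int\big(|\nabla w|+|H||w|\big).
\]
We absorb the $|H||w|$ term using H\"older and the bounded volume, applying the inequality to $|w|^{\gamma}$ with $\gamma=2(m-1)/(m-2)$ (or the usual trick for $m=2$). This yields an $L^2$ Sobolev inequality $\|w\|_{L^{2^*}}\le c(\|\nabla w\|_{L^2}+\|w\|_{L^2})$ with $c$ independent of $\varepsilon$.

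Next we obtain a Poincar\'e/energy bound to kill the $\|w\|_{L^2}$ term. Testing $\Delta u=f$ with $u$ (using $u=0$ on $\partial L^\varepsilon_\sigma$) gives $\int|\nabla u|^2\le \|f\|_{L^p}\|u\|_{L^{p'}}$. We pair this with a Faber--Krahn type bound, which follows from the Sobolev inequality together with small-set absorption: for sets $E$ of small measure, $\|w\|_{L^2(E)}\le |E|^{1/m}\|w\|_{L^{2^*}}$, and the $\|w\|_{L^2}$ term can be absorbed. This gives $\|u\|_{L^{2^*}}\le c\|f\|_{L^p}$ uniformly. If the total volume is not small, we instead run the level-set argument directly, as in Gilbarg--Trudinger: the truncations $(u-k)^+$ are supported on level sets whose measure tends to zero.

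Then we run the Stampacchia/Moser iteration. For $k\ge0$, test with $(u-k)^+$ and set $A(k)=\{u>k\}$. Using the Sobolev inequality we get
\[
\|(u-k)^+\|_{L^{2^*}}^2\le c\|f\|_{L^p}\|(u-k)^+\|_{L^{2^*}}|A(k)|^{1-\frac1p-\frac1{2^*}}+c\|(u-k)^+\|^2_{L^2}.
\]
For $k\ge k_0$ with $|A(k_0)|$ small, the last term is absorbed. Standard iteration on $\phi(k)=|A(k)|$ then gives $\phi(k_0+d)=0$ with $d\le c\|f\|_{L^p}|A(k_0)|^{\frac1m-\frac1p}$, where $p>m/2$ guarantees a positive exponent. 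The threshold $k_0$ is controlled by Chebyshev and the $L^{2^*}$ bound from the previous step. Applying the same argument to $-u$ gives the claim.

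The main obstacle is uniformity in $\varepsilon$. Three quantities must not degenerate as the bridge thins: the isoperimetric constant, the volume, and the mean curvature contribution. The first is universal by Michael--Simon, the second and third follow from \eqref{bridge1} and \eqref{bridgeest1}. The dependence on $\sigma$ enters only through the volume and mean-curvature bound of $L^\varepsilon_\sigma$; since $H$ is bounded there, this is harmless.
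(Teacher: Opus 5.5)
Your route is the paper's. The paper's entire proof is the remark that the argument of \cite[Theorem 8.16]{GT01} goes through once the Euclidean Sobolev inequality is replaced by the Michael--Simon inequality, whose constant does not depend on $\varepsilon$. Your Stampacchia truncation scheme is a standard variant of that argument, and you are right that the mean curvature term in Michael--Simon must be handled; the paper does not mention it.

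\textbf{The gap: the global bound on $u$ is not proved.} You weakened $|H|\le c\varepsilon^2$ to mere boundedness, so you only have $\|w\|_{L^{2^*}}\le c(\|\nabla w\|_{L^2}+\|w\|_{L^2})$. Your iteration then needs $\|u\|_{L^{2^*}}\le c\|f\|_{L^p}$, to pick $k_0$ by Chebyshev, and this bound is never established. Small-set absorption only applies to functions supported on sets of small measure, and $u$ is supported on all of $L^\varepsilon_\sigma$, whose volume is bounded below. The fallback (``run the level-set argument directly'') uses the same missing bound to choose $k_0$, so it is circular.

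Bounded $H$ and bounded volume cannot give this bound in principle. Take the unit sphere $S^m$ with a geodesic ball of radius $\delta$ removed: it has $|H|=m$ and bounded volume. Its first Dirichlet eigenvalue $\lambda_1$ tends to $0$ as $\delta\to0$. With $f$ a first eigenfunction and $u=-f/\lambda_1$, the ratio $\sup|u|/\|f\|_{L^p}$ is unbounded along the family. So something specific to $L^\varepsilon_\sigma$ has to enter.

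\textbf{The fix.} Use the estimate you already wrote down. $H$ is supported in the patching region with $|H|\le c\varepsilon^2$, so $\|H\|_{L^m(L^\varepsilon_\sigma)}\to 0$ as $\varepsilon\to0$. Then $\int|H||w|\le \|H\|_{L^m}\|w\|_{L^{m/(m-1)}}$ can be absorbed into the left side of Michael--Simon for every $w\in C^1_0(L^\varepsilon_\sigma)$ once $\varepsilon$ is small. This gives $\|w\|_{L^{2^*}}\le c\|\nabla w\|_{L^2}$ with no lower-order term, which is what the paper implicitly uses. The energy identity then yields $\|u\|_{L^{2^*}}\le c\|f\|_{L^p}$ directly, and the iteration can start at $k_0=0$ with no absorption step.

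\textbf{A smaller slip.} Your iteration actually gives $d\le c\|f\|_{L^p}|A(k_0)|^{\frac2m-\frac1p}$, not $|A(k_0)|^{\frac1m-\frac1p}$. It is $\frac2m-\frac1p$ (the scale-invariant exponent) that is positive exactly when $p>m/2$.
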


The global Schauder estimates on $L_\sigma^\varepsilon$ we use originally appear in \cite[Lemma 1 \& 2, p. 522]{Sm87}, where they are proved for a rather general class of second-order strongly elliptic principally diagonal linear systems that include $\Delta$ (see pp. 521--522). However, we adopt the version in \cite{Sm89} or \cite{Sm93}. Using \eqref{bridge2}, we deduce that the boundary hypothesis in \cite[(f), p. 521]{Sm87} is met for $L_\sigma^\varepsilon$ allowing us to apply the estimates in our setting. 
\begin{proposition}[Global Schauder estimates]\label{prop: globalschauder}
    Suppose $u \in C^{2,\alpha}(L_\sigma^\varepsilon)$, $u = 0$ on $\partial L_\sigma^\varepsilon$, and $\Delta u = f$ in $L_\sigma^\varepsilon$. Then there is a constant $c: = c(\alpha,\sigma)$ such that 
    \begin{itemize}
        \item[(a)] $\| du\|_{C^{1}(L_\sigma^\varepsilon)} \leq c\big(\varepsilon^{-2}\|u\|_{C^0(L_\sigma^\varepsilon)} +  \|f\|_{C^{0,\alpha}(L_\sigma^\varepsilon)} \big)$.
        \item[(b)] $[\nabla du]_{(\alpha); L_\sigma^\varepsilon} \leq c\big(\varepsilon^{-2-\alpha}\|u\|_{C^0(L_\sigma^\varepsilon)} + \varepsilon^{ - \alpha}\|f\|_{C^{0,\alpha}(L_\sigma^\varepsilon)}\big)$.
    \end{itemize}
\end{proposition}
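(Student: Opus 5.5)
The estimates follow from a rescaling argument. We reduce to the standard interior and boundary Schauder estimates of \cite{GT01} at scale $\varepsilon$, and then interpolate.

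First, on the region of $L_\sigma^\varepsilon$ away from the bridges and patching regions, the geometry is fixed and independent of $\varepsilon$. This is because $L^{\varepsilon_1}\subset L^{\varepsilon_2}$ and the cones are smooth away from $\rho<\sigma$. There the usual global Schauder estimate for the Dirichlet problem holds with constants depending only on $\sigma,\alpha$:
\[
\|u\|_{C^{2,\alpha}}\le c\bigl(\|u\|_{C^0}+\|f\|_{C^{0,\alpha}}\bigr).
\]
This is stronger than (a) and (b), since $\varepsilon\le 1$. The only degeneration comes from the bridges $\mathscr{B}_\varepsilon$, which have width comparable to $\varepsilon$. By \eqref{bridge2}, their boundary has curvature at most $c\varepsilon^{-1}$.

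Near the bridges, I would argue at scale $\varepsilon$. Fix a point $x_0\in L^\varepsilon_\sigma$ within distance $O(1)$ of $\mathscr{B}_\varepsilon$. Consider the ball $B_{\varepsilon}(x_0)\cap L^\varepsilon_\sigma$ and dilate it by $\varepsilon^{-1}$, setting
\[
\tilde u(y)=u(x_0+\varepsilon y),\qquad \Delta\tilde u=\varepsilon^2 f(x_0+\varepsilon\,\cdot)=:\tilde f.
\]
By \eqref{bridge2} and the flatness of the race-car tracks (and the explicit form of the patching region $\mathscr{P}_\varepsilon$), the rescaled domains are uniformly regular: they are submanifolds with boundary of unit size, with $C^{2,\alpha}$ bounds on the metric and the boundary that are independent of $\varepsilon$ and $x_0$. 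This uniform geometry is exactly hypothesis (f) of \cite{Sm87}. The interior estimates Proposition \ref{prop: localest}(b),(c), together with their boundary analogues (GT Lemma 6.4/Theorem 6.6 for zero Dirichlet data), then give on the half-size ball
\[
\|\nabla^k\tilde u\|_{C^0}+[\nabla^2\tilde u]_{\alpha}\le c\bigl(\|\tilde u\|_{C^0}+\|\tilde f\|_{C^{0,\alpha}}\bigr),\qquad k=1,2.
\]

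Scaling back uses $\nabla^k u=\varepsilon^{-k}\nabla^k\tilde u$, $[\nabla^2 u]_\alpha=\varepsilon^{-2-\alpha}[\nabla^2\tilde u]_\alpha$, and $\|\tilde f\|_{C^{0,\alpha}}\le\varepsilon^2\|f\|_{C^0}+\varepsilon^{2+\alpha}[f]_\alpha$. This yields
\[
|\nabla^k u(x_0)|\le c\bigl(\varepsilon^{-k}\|u\|_{C^0}+\varepsilon^{2-k}\|f\|_{C^{0,\alpha}}\bigr),
\]
which for $k=1,2$ is bounded by the right-hand side of (a). The Hölder seminorm satisfies
\[
[\nabla du]_{\alpha}\le c\bigl(\varepsilon^{-2-\alpha}\|u\|_{C^0}+\varepsilon^{-\alpha}\|f\|_{C^{0,\alpha}}\bigr)
\]
on pairs of points at distance $<\varepsilon/2$. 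For pairs of points at distance $\ge\varepsilon/2$, the seminorm is bounded by $2\|\nabla^2u\|_{C^0}\,(\varepsilon/2)^{-\alpha}$. Inserting (a) gives (b).

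Finally, the bridge region and the fixed region are patched together by a covering argument with overlaps. This is harmless because the estimates are pointwise plus short-range Hölder.

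The main obstacle is verifying the uniform geometry of the rescaled bridge domains near the corners where $\partial\mathscr{B}_\varepsilon$ meets $\Sigma^i$, i.e. the smoothed patching region. There one must check that after the smoothing of \cite[Section 2]{Sm87}, the boundary curvature is $O(\varepsilon^{-1})$ with $O(\varepsilon^{-2})$ control on its derivative, so that the rescaled boundaries are uniformly $C^{2,\alpha}$. I would first try to read this off directly from the construction in Subsection \ref{section: approxsoln}. There the wedge is flattened onto the plane and the track is flat, so only the corner smoothing at scale $\varepsilon$ contributes, and that smoothing can be chosen as a fixed profile dilated by $\varepsilon$.
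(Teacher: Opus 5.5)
Your argument is correct, but it is not what the paper does. The paper gives no independent proof: it invokes \cite[Proposition 3.3]{Sm89} (going back to \cite[Lemmas 1 and 2, p.~522]{Sm87}) with $\Delta$ in place of Smale's operator, after asserting that \eqref{bridge2} verifies Smale's boundary hypothesis (f).

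You instead reprove the estimate directly. You blow up balls of radius a fixed multiple of $\varepsilon$ to unit size and apply standard interior and zero-Dirichlet boundary Schauder estimates with $\varepsilon$-independent constants. You then scale back, and control the global H\"older seminorm by splitting pairs of points at distance below or above $\varepsilon/2$ and feeding in (a). This is the mechanism underlying Smale's lemmas, so the two routes agree in substance. The citation buys brevity and generality, since Smale treats principally diagonal elliptic systems. Your version is self-contained, shows exactly where the factors $\varepsilon^{-2}$, $\varepsilon^{-2-\alpha}$, $\varepsilon^{-\alpha}$ come from, and isolates what must actually be checked about the bridges.

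Your caution about the bridges is well placed. A $C^0$ bound on $A_{\partial\mathscr{B}_\varepsilon}$ as in \eqref{bridge2} only controls the rescaled boundary in $C^{1,1}$. The $C^{2,\alpha}$ boundary estimates need uniform $C^{2,\alpha}$ control of both the rescaled boundary and the rescaled metric. This does follow from the construction in Subsection~\ref{section: approxsoln}. The corner smoothing can be taken as a fixed profile dilated by $\varepsilon$. On $\mathscr{P}_\varepsilon$ the rescaled metric is $y\mapsto g(x_0+\varepsilon y)$ with $g=I+D^2(\phi u)^TD^2(\phi u)$. Since $D^2(\phi u)=O(\varepsilon)$ and $D^3(\phi u)=O(1)$, this metric is uniformly $C^{1,\alpha}$-close to flat.

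One minor point: away from the bridges you are really using local estimates up to the true boundary (with cut-offs), not a global Dirichlet estimate. The reason is that $u$ need not vanish on the interface between your two regions. Working at scale $\varepsilon$ everywhere avoids this step altogether.
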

\begin{proof}
    This is derived directly from \cite[Proposition 3.3]{Sm89} with $\Delta$ replacing $L$. We have only included the higher order estimates, since they are all we need for the proof in our setting.
\end{proof}

To extend the Caffarelli--Hardt--Simon method to special Lagrangian cones, we proved the following pointwise estimates for $R(u)$ (though stated differently).
\begin{proposition}[Pointwise error estimate]\label{prop: pointerror}
    For $\nu > 2$, $\alpha \in (0,1)$, and any $u \in C_{\text{loc}}^{2,\alpha}(L^\varepsilon)$ with $\tnorm{du}_{1,\alpha,\nu-1}$ sufficiently small there is a constant $c := c(\alpha)$ such that 
    \begin{itemize}
        \item[(a)] $|R(u)| \leq c\big(\rho^{-2}|du|^2 + |\nabla du|^2\big)$ pointwise for in $L^\varepsilon$.
        \item[(b)] $[R(u)]_{\alpha;[\sigma,2\sigma]} \leq c \sigma^{-2-\alpha}[du]_{1;0;\sigma}[du]_{1,\alpha;0;\sigma}$ for any $\sigma\in (0,\frac{1}{2}]$.
    \end{itemize}
\end{proposition}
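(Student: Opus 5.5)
The plan is to reduce both estimates to the fixed-annulus Taylor-expansion argument of Lemma \ref{lem:fixed-annulus-R}, combined with the dilation invariance of Lemma \ref{lem:R-rescaling}, just as in Propositions \ref{prop: bound-Rf} and \ref{prop: contraction-estimate}. Recall the local representation $R(u)(x)=G(x,du_x,\nabla du_x)$. Here $G$ is smooth near the zero section and, by \eqref{eq: vanishing-H}, satisfies $G(x,0,0)=0$ and $D_{(v,w)}G(x,0,0)=0$. This representation holds on all of $L^\varepsilon$, since the Weinstein map of Proposition \ref{prop:weinstein-approximate-solutions} is used. On the part of $L^\varepsilon$ away from the conic points (the region $A_0$ together with the bridges) the Weinstein neighborhood does not depend on $\varepsilon$, because $L^{\varepsilon_1}\subset L^{\varepsilon_2}$. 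Also, $\rho$ is bounded above and below there. So on that region Taylor's formula $G(x,z)=B(x,z)[z,z]$ with $B$ as in \eqref{eq: B-as-integral} gives $|R(u)|\le c|z|^2\le c(|du|^2+|\nabla du|^2)\le c(\rho^{-2}|du|^2+|\nabla du|^2)$. This requires $|z|$ to be small, which follows from $\tnorm{du}_{1,\alpha;\nu-1}$ small.

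On each conical piece $C_1(\Sigma^i)$, where the Weinstein map is the conical one of Proposition \ref{prop:weinstein}, I would prove (a) by scaling. Fix $x$ with $\rho(x)\in[\sigma,2\sigma]$ and put $\lambda=\sigma$. By Lemma \ref{lem:R-rescaling}, $R(u)(x)=R(u_\lambda)(\lambda^{-1}x)$, and $\lambda^{-1}x$ lies in the fixed annulus where the constants in the Taylor bound are uniform. Pointwise, $|R(u_\lambda)(y)|\le c(|du_\lambda(y)|^2+|\nabla du_\lambda(y)|^2)$. The scaling relations $|du_\lambda|(y)=\lambda^{-1}|du|(\lambda y)$ and $|\nabla du_\lambda|(y)=|\nabla du|(\lambda y)$, both measured in $g_C$, then give $|R(u)(x)|\le c(\sigma^{-2}|du|^2+|\nabla du|^2)$. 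Since $\rho\simeq\sigma$, this is (a). The required smallness holds because $\sigma^{-1}|du|+|\nabla du|\lesssim\sigma^{\nu-2}\tnorm{du}_{1,\alpha;\nu-1}$, which is small since $\nu>2$.

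For (b) I would again rescale to the fixed annulus $A$. The Hölder seminorm transforms as $[R(u)]_{\alpha;[\sigma,2\sigma]}=\sigma^{-\alpha}[R(u_\sigma)]_{\alpha;A}$. On $A$ I use the Banach algebra structure of $C^{0,\alpha}$ more carefully than in Lemma \ref{lem:fixed-annulus-R}, keeping one factor in $C^0$:
\[
[B(\cdot,z)[z,z]]_\alpha\le [B(\cdot,z)]_\alpha\|z\|_{C^0}^2+2\|B\|_{C^0}\|z\|_{C^0}[z]_\alpha.
\]
Since $[B(\cdot,z)]_\alpha\le c(1+[z]_\alpha)$ and $\|z\|_{C^0}$ is small, this is bounded by $c\|z\|_{C^0}\|z\|_{C^{0,\alpha}}$, that is, by $c\|du_\sigma\|_{C^1(A)}\|du_\sigma\|_{C^{1,\alpha}(A)}$. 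One subtlety: the term $\|z\|_{C^0}^2$ must be absorbed into $\|z\|_{C^0}\|z\|_{C^{0,\alpha}}$, which is fine.

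Scaling back uses $\|du_\sigma\|_{C^1(A)}\le c\sigma^{-1}[du]_{1;0;\sigma}$ and $\|du_\sigma\|_{C^{1,\alpha}(A)}\le c\sigma^{-1}[du]_{1,\alpha;0;\sigma}$. These are the same scaling facts used in the proof of Proposition \ref{prop: bound-Rf}; the weighted norms are built from $r\partial_r$ and $\nabla^\Sigma$, so they match the rescaled unweighted norms. Combining gives $[R(u)]_{\alpha;[\sigma,2\sigma]}\le c\sigma^{-2-\alpha}[du]_{1;0;\sigma}[du]_{1,\alpha;0;\sigma}$.

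The main obstacle I expect is bookkeeping rather than analysis. First, I must make sure the constants in the Taylor expansion are uniform: uniform in $\sigma$ via the conical equivariance $\Psi\circ\tilde\delta_\lambda=\delta_\lambda\circ\Psi$, and uniform in $\varepsilon$ via the nested construction of the $L^\varepsilon$. Second, I must keep the Hölder seminorm estimate asymmetric, with one factor in $C^0$ and one in $C^{0,\alpha}$, which is slightly sharper than Lemma \ref{lem:fixed-annulus-R}.
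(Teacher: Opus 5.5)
Your proposal is correct and follows essentially the same route as the paper. For (a), both use the Taylor expansion $G(x,z)=B(x,z)[z,z]$ on the $\varepsilon$-independent bridge region and dilation via Lemma \ref{lem:R-rescaling} on the cones; for (b), both use the asymmetric fixed-annulus bound $[R(u)]_{(\alpha);A}\le c\|(du,\nabla du)\|_{L^\infty(A)}\|(du,\nabla du)\|_{C^{0,\alpha}(A)}$ followed by rescaling, and your write-up just supplies the scaling identities for $|du_\lambda|$, $|\nabla du_\lambda|$ and the H\"older product rule that the paper leaves implicit.
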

\begin{proof}
    Extend the definition of $G$ in \eqref{eq: def-of-H} to $T^*L^\varepsilon\oplus \operatorname{Sym}^2T^*L^\varepsilon$ in the natural way. Then \eqref{eq: def-B-bilinear} and \eqref{eq: B-as-integral} imply that $(a)$ is satisfied around the bridges. Arguing by rescaling as in the proof of Proposition \ref{prop: bound-Rf}, one verifies that the bound also hold on the cones.
    
    To prove estimate $(b)$, note that the argument leading to \eqref{eq: Holder-estimate-Rf} implies that
    \begin{align*}
        [R(u)]_{(\alpha); A}\leq c\lVert(du,\nabla du)\rVert_{L^\infty(A)}\lVert(du,\nabla du)\rVert_{C^{0,\alpha}(A)}.
    \end{align*}
    Arguing by rescaling as in the proof of Proposition \ref{prop: bound-Rf}, we see that the estimate holds on every annulus.
\end{proof}
\subsection{The fixed point argument}\label{ssec: fixed-point-argument2}
In this section, we prove Theorem \ref{bridgeprinciple}. For clarity of exposition, we present the proof in the case of two ``unit length" special Lagrangian cones $C^1$ and $C^2$ joined by an 
$\varepsilon$-bridge to form a Lagrangian approximate solution $L^\varepsilon$. However, the argument applies to any configuration of truncated special Lagrangian cones which pairwise share common tangent planes.

For $\nu >2$ with $\nu,\nu+2  \notin \Lambda_{L^\varepsilon}$, set $\delta := \delta(\nu)=\nu+\frac{m}{2}-1$.
Let $\nu_0>0$ be such that the kernel of $\Delta$ in $C^{2,\alpha,\nu_0}(C^1\cup C^2)$ is trivial. Note that this remains true for any $\nu>\nu_0$, and that, by \cite[Lemma 2.2]{Sm93}, the corresponding $\delta$ is such that
\begin{align*}
    \Delta: \rho^\delta H^2\cap H^1_0(C^1\cup C^2)\to \rho^{\delta-2}L^2(C^1\cup C^2)
\end{align*}
has empty kernel (note that in \cite{Sm93}, Smale works with the rescaled Laplacian $\rho^2\Delta$, which maps to a different weighted space). From now on, assume that $\nu>\nu_0$.
For $t,\varepsilon>0$ to be determined, define $\mathscr{K}(\varepsilon,t) := \mathscr{K}(\varepsilon, \alpha, \nu, t)$ by
\begin{align}
    \mathscr{K}(\varepsilon, t) = \{u \in  C^{2,\alpha, \nu}(L^\varepsilon) : &\tnorm{du}_{1;\nu- 1} \leq \varepsilon^{2  - t} \label{Kepsilon}, \\
    &\tnorm{du}_{1, \alpha; \nu - 1} \leq \varepsilon^{2 - \alpha - t}, \text{ and } \lVert du\rVert_{1; \delta+1}\leq \varepsilon^{\frac{m+2}{2}-t} \}. \nonumber
\end{align}
Note that any element $u\in \mathscr{K}(\varepsilon, t)$ must tend to zero at the cone tips, therefore $\mathscr{K}(\varepsilon, t)$ is a bounded subset of $C^{2,\alpha,\nu}(L^\varepsilon)$.

For any $\varepsilon>0$, let $K_H^{L^\varepsilon}(-\delta,\Delta)$ denote the kernel of the map
\begin{align*}
    \Delta: \rho^{-\delta}  H^1_0\cap  H^2(L^\varepsilon)\to \rho^{-2-\delta} L^2(L^\varepsilon).
\end{align*}
By \cite[Lemma 4.2]{Sm93}, there are constants $J(\nu)$ and $\varepsilon_0$ such that, for any $\varepsilon\in (0,\varepsilon_0)$, we have
\begin{align*}
    \dim K_H^{L^\varepsilon}(-\delta,\Delta)=J(\nu).
\end{align*}
Let $\{\varepsilon_k\}_{k\in \mathbb{N}}$ be a sequence in $(0,\varepsilon_0)$ tending to zero.
For any $k\in \mathbb{N}$, let $\{w_1^k, ..., w_{J(\nu)}^k\}$ be an orthonormal basis of $K_H^{L^{\varepsilon_k}}(-\delta,\Delta)$ with respect to $\rho^{-\delta} L^2(L^{\varepsilon_k})$.

Observe that, by the proof of Lemmas 4.1 and 4.2 in \cite{Sm93}, given a sequence $\{w_k\}_{k\in \mathbb{N}}$ of elements of $K_H^{L^{\varepsilon_k}}(-\delta,\Delta)$ with $\lVert w_k\rVert_{\rho^{-\delta} L^2(L^{\varepsilon_k})}=1$, one can extract a subsequence (not relabeled) so that the functions $v_k:=w_k\vert_{C^1\cup C^2}$ converge in $\rho^{-\delta'}H^1(C^1\cup C^2)$ (for $\delta'>\delta$) to a function $v\in K_H^{C^1\cup C^2}(-\delta,\Delta)$ with $\lVert v\rVert_{\rho^{-\delta} L^2(C^1\cup C^2)}=1$. Therefore, by a diagonal argument we may pass to a subsequence (not relabeled) and assume that, for any $i\in \{1,..., J(\nu)\}$, the functions $v_i^k:=w_i^k\vert_{C^1\cup C^2}$ converge to a function $v_i\in K_H^{C^1\cup C^2}(-\delta,\Delta)$ as $k \rightarrow \infty$. Moreover, $\{v_1,...v_{J(\nu)}\}$ is an orthonormal basis of $K_H^{C^1\cup C^2}(-\delta,\Delta)$ with respect to $\rho^{-\delta} L^2(C^1\cup C^2)$ (the orthogonality follows from the $L^2$ estimate in a neighborhood of the bridges in the proof of \cite[Lemma 4.1]{Sm93}).

Note that, since $\{v_i^k\}_{k\in \mathbb{N}}$ converges in $\rho^{-\delta'}H^1(C^1\cup C^2)$ and every $v_i^k$ is harmonic, we have that $\partial_\nu v_i^k\to \partial_\nu v_i$ in $H^{-\frac{1}{2}}((\partial C^1\cup \partial C^2)\smallsetminus (B_\eta(p)\cup B_\eta(q)))$. Here, $\eta\in (0,1)$ is small and $p,q$ denote points on $C^1$ and $C^2$, respectively, where the $\varepsilon$-bridge is attached (these correspond to the point $p$ in Subsection \ref{section: approxsoln}).
For $\eta>0$ and $\varepsilon_k$ as above, let
\begin{align*}
    \mathscr{F}^{\varepsilon_k}(\eta)=\{\psi\in C^{2,\alpha}(\partial L^{\varepsilon_k}): \operatorname{supp}(\psi)\subset\partial L^{\varepsilon_k}\smallsetminus(\mathscr{B}_{\varepsilon_k}\cup B_{2\eta}(p)\cup B_{2\eta}(q))\}.
\end{align*}
Define the map
\begin{align*}
    B_\nu^{\varepsilon_k}: \mathscr{F}^{\varepsilon_k}(\eta)\to \mathbb{R}^{J(\nu)},\quad \psi\mapsto \left(\int_{\partial L^{\varepsilon_k}}\psi\cdot\partial_{\nu}w_1^k\,,..., \int_{\partial L^{\varepsilon_k}}\psi\cdot\partial_{\nu}w_{J(\nu)}^k\right),
\end{align*}
and the analogous map $B_\nu^0$ for maps on $\partial C^1\cup \partial C^2$.

We now let $\{e_1,..., e_{J(\nu)}\}$ be such that, for $i\in \{1,..., J(\nu)\}$, we have $B_\nu^0(e_i)=(0,...,1,...,0)$, where the only $1$ is at the $i$-th position. Note that, since $v_i^k\to v_i$ in $\rho^{-\delta'}H^1(C_1\cup C_2)$, we have $\int_{\partial C^1\cup \partial C^2}e_i\cdot \partial_\nu w_j^k=\delta_{ij}+\delta^k$. Here, $\delta_{ij}$ is the Kronecker delta and $\delta^k\to 0$ as $k\to \infty$. Thus, for large enough $k$ the matrix $\textbf{A}^k\in \mathbb{R}^{J(\nu)\times J(\nu)}$ given by
\begin{align*}
    \mathbf{A}^k_{ij}=\int_{\partial C^1\cup \partial C^2}e_i\cdot \partial_\nu w_j^k
\end{align*}
is invertible, and its inverse $\mathbf{B}^k$ has entries bounded by $2$. 

For any $k\in \mathbb{N}$ and $i\in \{1,..., J(\nu)\}$, set $e_i^k:=\sum_j\mathbf{B}_{ij}^ke_j$ (regarded as an element in $\mathscr{F}^{\varepsilon_k}(\eta)$). Note that
    $B_\nu^{\varepsilon_k}(e_i^k)= (0,...,1,...,0)$, where the only $1$ is at the $i$-th position.
As $\mathbf{B}^k$ is uniformly bounded, we have in particular that
\begin{align}\label{eq: uniform-estimate-eik}
    \lVert e_i^k\rVert_{C^{2,\alpha}( \partial C^1\cup \partial C^2)}\leq c
\end{align}
for some constant $c$ independent of $k$.\\
For any $k\in \mathbb{N}$, we also set
\begin{align}\label{eq: epsilon-projection}
    \Pi_\nu^{\varepsilon_k}: \mathscr{F}^{\varepsilon_k}(\eta)\to \ker B_\nu^{\varepsilon_k},\quad \psi\mapsto \psi-\sum_{i=1}^{J(\nu)}\left(\int_{\partial L^{\varepsilon_k}}\psi\cdot \partial_\nu w_i^k\right)e_i^k.
\end{align}
Since $\partial_\nu w_i^k$ are uniformly bounded in $H^{-\frac{1}{2}}(\partial L^{\varepsilon_k}\smallsetminus (\mathscr{B}_{\varepsilon_k}\cup B_{2\eta}(p)\cup B_{2\eta}(q)))$ and \eqref{eq: uniform-estimate-eik} holds, we have
\begin{align}
\label{eq: holder-bound-bdry}
    \lVert \Pi_\nu^{\varepsilon_k}(\psi)\rVert_{C^{2,\alpha}( \partial L^{\varepsilon_k})}\leq c\lVert \psi\rVert_{C^{2,\alpha}(\partial L^{\varepsilon_k})}\quad \forall \psi\in \mathscr{F}^{\varepsilon_k}(\eta),
\end{align}
where $c$ does not depend on $k$.\\
Finally, for any $k\in \mathbb{N}$ and $f\in \rho^{\delta}L^2(L^{\varepsilon_k})$ we define
\begin{align*}
    F^{\varepsilon_k}(f):=\sum_{i=1}^{J(\nu)}\left(\int_{L^{\varepsilon_k}}f\cdot w_i^k\, d\vol_{L^{\varepsilon_k}}\right)e_i^k.
\end{align*}
Note that $e_i^k$ can be extended to a function on $L^{\varepsilon_k}$ supported in a small neighborhood of $\partial L^{\varepsilon_k}\smallsetminus(\mathscr{B}_{\varepsilon_k}\cup B_{2\eta}(p)\cup B_{2\eta}(q))$, and for which the H\"older and Sobolev norms are controlled by the ones of $e_i^k$ (with constants independent of $k$). These induce extensions of $F^{\varepsilon_k}(f)$ to $L^{\varepsilon_k}$, which we will still denote by $F^{\varepsilon_k}(f)$.

By \eqref{eq: uniform-estimate-eik}, there holds
\begin{align}
    \tnorm{F^{\varepsilon_k}(f)}_{2, \alpha; \nu} &\leq c \sum_{j = 1}^{J(\nu)}\Big|\int_{L^{\varepsilon_k}} f w_j^k \, d\vol_{L^{\varepsilon_k}} \Big|,  \label{Fest} \\
    \|F^{\varepsilon_k}(f)\|_{2;\delta } &\leq c \sum_{j = 1}^{J(\nu)}\Big|\int_{L^{\varepsilon_k}} f w_j^k \, d\vol_{L^{\varepsilon_k}} \Big|, \label{Fest2}
\end{align}
where $c$ is a constant independent of $k$. Similarly, $\Pi_\nu^{\varepsilon_k}(\psi)$ can be extended to a function on $L^{\varepsilon_k}$ supported in a small neighborhood of $\partial L^{\varepsilon_k}\smallsetminus(\mathscr{B}_{\varepsilon_k}\cup B_{2\eta}(p)\cup B_{2\eta}(q))$, and for which the H\"older and Sobolev norms are controlled by the ones of $\psi$. We will denote this extension by $\Pi_\nu^{\varepsilon_k}(\psi)$ also.
\begin{remark}
    For \(\varepsilon=0\), the image of the projection defined through \(B_\nu^0\) can be chosen to be the same as the image of $\Pi_{\nu}$ defined in \eqref{eq: def-projection-Pi}. Thus, the two projections agree up to
the choice of a finite-dimensional complementary subspace. Indeed, by the choice of $\delta(\nu)$, when the cones are disjoint one can choose the basis of $K_H^{C^1\cup C^2}(-\delta,\Delta)$ to be the one given (up to scalar multiplication) by the functions $(r^{\gamma_j}-r^{\gamma_j^-})\phi_j$ for $j\in \mathcal{I}_{\nu}$ (see Appendix \ref{ssec: existence-uniqueness}). Then \(\ker B_\nu^0\) is the \(L^2\)-orthogonal complement of the space spanned by the eigenfunctions \(\phi_j\) for $j\in \mathcal{I}_{\nu}$ on each link.
\end{remark}

Let $\varepsilon$ be an element of the sequence $\{\varepsilon_k\}_{k\in \mathbb{N}}$ introduced above. Set
    \begin{align*}
        \mathscr{F}_\ast^\varepsilon(\eta):=\{\psi\in \mathscr{F}^\varepsilon(\eta):\,\lVert \psi\rVert_{C^{2,\alpha}(\partial L^\varepsilon)}\leq \varepsilon^{\frac{m+5}{2}}\}.
    \end{align*}
    Let $\psi\in \mathscr{F}^\varepsilon_\ast(\eta)$.
    For any $u\in C^{2,\alpha,\nu}(L^\varepsilon)$, let $v\in C^{2,\alpha,\nu}(L^\varepsilon)$ be the unique solution of the Dirichlet problem \eqref{dp} for $\Delta$ on $L^\varepsilon$ with boundary data $\psi$, granted by \cite[Theorem 2.3]{Sm93}. Denote by $\mathcal{P}_{\psi}: C^{2,\alpha, \nu}(L^\varepsilon)\to C^{2,\alpha, \nu}(L^\varepsilon)$ the map sending $u$ to $v$. 
\begin{proposition}\label{prop: fixed-point-theorem}
    The map $\mathcal{P}_{\psi}$ sends $\mathscr{K}(\varepsilon, t)$ into $\mathscr{K}(\varepsilon,t)$ for $t$ small depending only on $m$ and $\alpha$. In particular, $\mathcal{P}_{\psi}$ has a fixed point $u \in \mathscr{K}(\varepsilon,t)$.
\end{proposition}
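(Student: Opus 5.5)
Given $u\in\mathscr K(\varepsilon,t)$, I set $f=f(u)=-\Theta_0-R(u)$ and $v=\mathcal P_\psi(u)$. I will check the three defining inequalities of $\mathscr K(\varepsilon,t)$ for $v$ in turn, following the scheme of \cite[Section 4]{Sm93} and \cite{Sm89}. Once $\mathcal P_\psi(\mathscr K)\subset\mathscr K$ is known, I apply the Schauder fixed point theorem. $\mathscr K(\varepsilon,t)$ is convex, being cut out by seminorm balls, and it is closed and bounded in $C^{2,\alpha,\nu}$. The plan is to view it as a compact subset of $C^{2,\alpha',\nu'}(L^\varepsilon)$ for slightly smaller $\alpha'<\alpha$ and $\nu'<\nu$, using Arzel\`a--Ascoli together with the uniform decay at the tips. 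Continuity of $\mathcal P_\psi$ in that topology then follows from the Lipschitz estimate for $R$ (the analogue of Proposition~\ref{prop: contraction-estimate} on $L^\varepsilon$) and the linear estimates.

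\textbf{Step 1 (size of $f$).} By \eqref{bridgeest1}, $\Theta_0$ is $O(\varepsilon^3)$ in $C^0$ and $O(\varepsilon^{2})$ in $C^{0,1}$. Interpolating gives $\|\Theta_0\|_{C^{0,\alpha}}\le c\varepsilon^{3-\alpha}$, and also $\|\Theta_0\|_{L^p}\le c\varepsilon^{(m-1+3p)/p}$. Since $\Theta_0$ is supported in the patching region, away from the tips, the weighted norms of $\Theta_0$ equal its unweighted norms. For $R(u)$ I use Proposition~\ref{prop: pointerror}. Part (a) gives $|R(u)|\le c\rho^{2\nu-4}\tnorm{du}_{1;\nu-1}^2\le c\varepsilon^{4-2t}\rho^{2\nu-4}$, which is $o(\varepsilon^{3})$ in every weighted $C^0$ norm with weight $\le 2\nu-4$. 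Part (b) gives the H\"older part as $\le c\,\varepsilon^{2-t}\varepsilon^{2-\alpha-t}$ with the correct weight. Hence $\tnorm{f}_{0,\alpha;\nu-2}\le c(\varepsilon^{3-\alpha}+\varepsilon^{4-\alpha-2t})$. The weighted $L^2$ norm also satisfies $\|f\|_{0;\delta-1}\lesssim \varepsilon^{\frac{m+5}{2}}+\varepsilon^{4-2t}\|du\|_{1;\delta+1}$-type terms.

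\textbf{Step 2 (linear estimates).} I decompose $v=v_0+F^\varepsilon(f)+\Pi^\varepsilon_\nu\psi$, where $v_0\in\rho^\delta H^1_0$ solves the problem with zero boundary data in the complement of the approximate kernel. For $v_0$, the uniform invertibility of $\Delta$ on that complement from \cite[Lemmas 4.1, 4.2, Theorem 2.3]{Sm93} gives $\|v_0\|_{2;\delta}\le c\|f\|_{0;\delta-2}$ with $c$ independent of $\varepsilon$. The $L^2$ norm of $f$ is dominated by the bridge contribution $\varepsilon^{\frac{m-1}{2}+3}=\varepsilon^{\frac{m+5}{2}}$, so $\|dv\|_{1;\delta+1}\le c\varepsilon^{\frac{m+5}{2}}\le\varepsilon^{\frac{m+2}{2}-t}$. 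The $F^\varepsilon$ and $\Pi^\varepsilon_\nu$ terms are controlled by \eqref{Fest}, \eqref{Fest2}, \eqref{eq: holder-bound-bdry} and $\|\psi\|\le\varepsilon^{\frac{m+5}{2}}$.

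\textbf{Step 3 ($C^0$ and H\"older bounds).} This is the main obstacle. Near the tips, on $L^\varepsilon\setminus L^\varepsilon_\sigma$, I get the weighted bounds from the local estimates (Proposition~\ref{prop: localest}) applied on annuli after rescaling. The $L^2$ control from Step 2 is converted into $\rho^{\nu}$ decay, using $\delta=\nu+\frac m2-1$. On $L^\varepsilon_\sigma$, Proposition~\ref{c0} gives $\|v\|_{C^0}\le c\|f\|_{L^p}+$(boundary and tip data) $\lesssim \varepsilon^{(m-1+3p)/p}\to\varepsilon^{3}$ for large $p$. Then Proposition~\ref{prop: globalschauder}(a) yields $\|dv\|_{C^1}\le c(\varepsilon^{-2}\varepsilon^{3-\eta}+\varepsilon^{3-\alpha})\le\varepsilon^{2-t}$ once $p$ is large relative to $t$. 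Part (b) gives $[\nabla dv]_\alpha\le c(\varepsilon^{-2-\alpha}\varepsilon^{3-\eta}+\varepsilon^{-\alpha}\varepsilon^{3-\alpha})\le \varepsilon^{2-\alpha-t}$. This is exactly where the $C^1$ bound on $\Theta_0$ is needed: it makes $\|f\|_{C^{0,\alpha}}$ small enough to absorb the $\varepsilon^{-\alpha}$ loss. The quadratic terms $\varepsilon^{4-2t}$ beat $\varepsilon^{3}$ as long as $t<1/2$. The delicate bookkeeping is matching the exponent $p$ in Proposition~\ref{c0} with $t$ so that $\varepsilon^{-2}\|v\|_{C^0}\le\varepsilon^{2-t}$; I expect to choose $p\approx (m-1)/t$ for this. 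Combining Steps 1--3 with $\varepsilon$ small proves the invariance of $\mathscr K(\varepsilon,t)$, and Schauder's theorem gives the fixed point.
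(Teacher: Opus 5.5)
Your architecture is the paper's: split off $\Pi^\varepsilon_\nu\psi$ and $F^\varepsilon(f(u))$, get weighted $L^2$ control from \cite[Lemma 4.1]{Sm93}, prove global $C^0$ and Schauder bounds on $L^\varepsilon_{1/4}$, treat the tips separately, and apply Schauder's theorem in $C^{2,\alpha',\nu'}$. However, Step~3, which is the heart of the matter, does not close.

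Proposition~\ref{prop: globalschauder} loses $\varepsilon^{-2}$ (resp.\ $\varepsilon^{-2-\alpha}$). So to get $\|dv\|_{C^1}\le\varepsilon^{2-t}$ and $[\nabla dv]_{(\alpha)}\le\varepsilon^{2-\alpha-t}$, the zero-Dirichlet part of $v$ (your $v_0$, the paper's $v_1$) must satisfy $\|v_1\|_{C^0(L^\varepsilon_{1/4})}\le c\varepsilon^{4}$; this is \eqref{est06}.
\begin{itemize}
\item Your bound $\varepsilon^{3-\eta}$ gives $\varepsilon^{-2}\varepsilon^{3-\eta}=\varepsilon^{1-\eta}$, which is not $\le\varepsilon^{2-t}$.
\item Your choice of $p$ goes the wrong way. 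Since $\|\Theta_0\|_{L^p}\le c\varepsilon^{3+\frac{m-1}{p}}$ improves as $p$ \emph{decreases}, one takes $p=\frac m2+\tau$ in Proposition~\ref{c0}. This gives $\varepsilon^{5-\frac2m-O(\tau)}\le\varepsilon^4$ for $m\ge 3$.
\item With $p\approx(m-1)/t$ you only get $\varepsilon^{3+t}$, hence $\varepsilon^{-2}\|v\|_{C^0}\sim\varepsilon^{1+t}$, which again fails for small $t$.
\end{itemize}

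The more substantive missing idea concerns $R(u)$. The pointwise bound $|R(u)|\le c\varepsilon^{4-2t}$ is too weak: it gives $\varepsilon^{-2}\|R(u)\|_{L^p}\lesssim\varepsilon^{2-2t}$, which exceeds $\varepsilon^{2-t}$. ``Beating $\varepsilon^3$'' is not the relevant threshold. The paper closes this with the third, Sobolev condition in $\mathscr K(\varepsilon,t)$, namely $\|du\|_{1;\delta+1}\le\varepsilon^{\frac{m+2}{2}-t}$. It interpolates $L^{2p}$ between $C^0$ and $L^2$:
\[
\|R(u)\|_{L^p}\lesssim\|\nabla du\|_{C^0}^{2-\frac{4}{m+2\tau}}\|\nabla du\|_{L^2}^{\frac{4}{m+2\tau}}+(\text{same for } du)\lesssim\varepsilon^{6-\frac4m-O(t)-O(\tau)}.
\]
The same device, with $p=m$ in the local estimates, controls the cut-off commutator terms $\|v_1\|_{L^p}$ and $\|dv_1\|_{L^p}$ on $[\frac18,\frac14]$. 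In your proposal this condition never enters the sup-norm step, so the reason it is part of $\mathscr K$ is lost.

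Relatedly, your Step~2 asserts an $\varepsilon$-uniform weighted $H^2$ bound giving $\|dv\|_{1;\delta+1}\le c\varepsilon^{\frac{m+5}{2}}$. \cite[Lemma 4.1]{Sm93} only provides the uniform weighted $L^2$ bound; $H^1$ then follows by integrating by parts. Since $\partial\mathscr B_\varepsilon$ has curvature of order $\varepsilon^{-1}$ by \eqref{bridge2}, the paper controls second derivatives near the bridges only after the $C^1$ bound. It cuts off on $\Omega_{20\varepsilon}$, uses the conical Fredholm estimate on $C^1\cup C^2$ elsewhere, and uses $\|dv_1\|_{C^1}\le c\varepsilon^2$ on $\Omega_{20\varepsilon}$. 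This yields only $\varepsilon^{\frac{m+2}{2}}$, exactly the threshold built into $\mathscr K$. So the estimates should be run in the order $L^2\to H^1\to C^0\to C^{1,\alpha}\to H^2$.

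Your treatment near the tips (local estimates on rescaled annuli fed by the weighted $L^2$ bound) is a reasonable substitute for the paper's use of Theorem~\ref{thm: CHS-linear} on $C^i_{1/2}$, since no $\varepsilon$-loss occurs there. The failure is confined to the estimates on $L^\varepsilon_{1/4}$.
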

\begin{proof}
    We will prove that if $u \in \mathscr{K}(\varepsilon, t)$, then $v := \mathcal{P}_\psi(u) \in \mathscr{K}(\varepsilon,t)$ for $t$ sufficiently small. Write $v = v_1 + v_2$, where $v_1 \in C^{2,\alpha,\nu}(L^\varepsilon)$  solves 
    \begin{equation}
        \begin{cases}
    \Delta v_1 = f(u) - \Delta \mathcal{B}(u,\psi) &\text{ in } L^\varepsilon, \\
            v_1 = 0 &\text{ on } \partial L^\varepsilon,
        \end{cases} \label{v1}
    \end{equation}
    where $\mathcal{B}(u,\psi):=\Pi_{\nu}^\varepsilon(\psi)-F^\varepsilon(f(u))$ (recall that $f(u)=-\Theta_0^\varepsilon-R(u)$) and
    \begin{align*}
        v_2 := \mathcal{B}(u,\psi). \label{v2}
    \end{align*}
 We break the proof into five steps. \\
 
    \noindent
    \textbf{Step 1:} \emph{Preliminary estimates and estimates for $v_2$}. \\
    
    We first need to estimate $\|R(u)\|_{0;\delta}$ with weight $\delta = \nu + \frac{m}{2}-1$. Applying \eqref{bridgeest1} and \eqref{Fest} will then give the desired estimates for $v_2$. Using Proposition \ref{prop: pointerror}(a) and the definition of $\mathscr{K}(\varepsilon,t)$, we see that 
    \begin{align*}
        \|R(u)\|_{0;\delta}^2 &\leq c\int_{L^\varepsilon} |\nabla du|^4 \rho^{-2\delta} \, d\vol_{L^\varepsilon} +c\int_{L^\varepsilon}\lvert du\rvert^4\rho^{-2\delta-4 }\,d\vol_{L^\varepsilon}\\
        &= c\int_{L^\varepsilon} |\rho \nabla  du|^2 |\nabla  du|^2 \rho^{-2\delta - 2} \, d\vol_{L^\varepsilon} +c\int_{L^\varepsilon}\lvert du\rvert^2\lvert du\rvert^2\rho^{-2\delta-4}d\vol_{L^\varepsilon}\\
        &\leq c\tnorm{du}_{1;\nu-1}^2 \int_{L^\varepsilon} |\nabla  du|^2 \rho^{(2\nu - 4) - 2\delta} \, d\vol_{L^\varepsilon} +c\tnorm{du}_{1;\nu-1}^2\int_{L^\varepsilon}\lvert du\rvert^2\rho^{-2\delta-2}\rho^{2(\nu-2)}d\vol_{L^\varepsilon}\\
        &\leq c\tnorm{du}_{1;\nu-1}^2 \|\nabla du\|_{0;\delta}^2 +c\tnorm{du}_{1;\nu-1}^2 \| du\|_{0;\delta+1}^2\\
        &\leq c \varepsilon^{4  - 2t} \varepsilon^{m + 2 - 2t} = c \varepsilon^{m+6 - 4t},
    \end{align*}
    where we have applied $\nu > 2$ in the fourth line. We therefore have 
    \begin{equation}
        \|R(u)\|_{0;\delta} \leq c \varepsilon^{\frac{m+6}{2}- 2t}.\label{est1}   
    \end{equation}
    Using \eqref{Fest}, the Cauchy-Schwarz inequality, the triangle inequality, and \eqref{slagest} shows
    \begin{align}
        \tnorm{F^\varepsilon(f(u))}_{2, \alpha; \nu} &\leq c\sum_{j = 1}^{J(\nu)}\Big|\int_{L^\varepsilon} f(u)\rho^{-\delta} \cdot w_j \rho^{\delta} \, d\vol_{L^\varepsilon} \Big| \nonumber \\
        &\leq  c\big(\|\Theta_0\|_{0; \delta} + \|R(u)\|_{0;\delta}\big) \nonumber\\
        &\leq c\big(\varepsilon^{\frac{m+5}{2} } + \varepsilon^{\frac{m+6}{2}  - 2t} \big). \label{est01}
    \end{align}
    A similar estimate holds for $\|F^{\varepsilon}(f(u))\|_{2;\delta }$.
    By \ref{eq: holder-bound-bdry}, \eqref{Fest}, \eqref{Fest2}, \eqref{est1}, \eqref{est01},  the $L^2$ bound in \eqref{bridgeest1} we have in fact proved
    \begin{equation}
        \tnorm{dv_2}_{1, \alpha; \nu-1} \leq c \varepsilon^{\frac{m+5}{2}  } \text{ and } \lVert dv_2\rVert_{1; \delta+1} \leq c \varepsilon^{\frac{m+5}{2} }. \label{dv2}
    \end{equation}
    In particular, \eqref{dv2} implies $v_2 \in \mathscr{K}(\varepsilon,t)$ for any $t$ small.

    Notice that the right-hand side of \eqref{v1} is bounded in $\rho^{\delta} L^{2}(L^\varepsilon)$ by $c \varepsilon^{\frac{m+5}{2}}$ due to \eqref{est1}, the $L^2$ estimate in \eqref{bridgeest1}, and \eqref{eq: holder-bound-bdry}. We can therefore apply \cite[Lemma 4.1]{Sm93} to conclude 
    \begin{equation}
        \|v_1\|_{0;\delta+2} \leq c\lVert f(u) - \Delta \mathcal{B}(u,\psi)\rVert_{0;\delta}\leq  c \varepsilon^{\frac{m+5}{2} } \text{ for } t < \frac{1}{2m}. \label{est2}
    \end{equation}
    Integrating by parts, using that $|\nabla \rho| \leq 1$ on $L^\varepsilon$, 
    \begin{align*}
        \int_{L^\varepsilon} | d v_1|^2 \, \rho^{-2(\delta+1)}d\vol_{L^\varepsilon} &\leq \Big| \int_{L^\varepsilon} \langle v_1, \Delta v_1 \rangle \, \rho^{-2(\delta+1)} d\vol_{L^\varepsilon} \Big| + \Big|\int_{L^\varepsilon} 2(\delta+1)\langle \rho^{-\delta-1}dv_1, \rho^{-\delta}v_1 d\rho\rangle \, d\vol_{L^\varepsilon} \Big| \\
        &\leq \|v_1\|_{0;\delta+2}\|f(u)-\Delta\mathcal{B}(u,\psi)\|_{0;\delta} + 2(\delta+1)\|v_1\|_{0;\delta}\|dv_1\|_{0;\delta+1}.
    \end{align*}
    By \eqref{est2} and the discussion preceding it, the first term on the right-hand side is bounded by $c\varepsilon^{m+5}$. For the second term, we have
    \[2(\delta+1)\|v_1\|_{0;\delta}\|dv_1\|_{0;\delta+1} \leq (2(\delta+1))^2\frac{\|v_1\|_{0;\delta}^2}{2} + \frac{\|dv_1\|_{0;\delta+1}^2}{2}.\]
    As $\lVert v_1\rVert_{0;\delta}\leq c \lVert v_1\rVert_{0;\delta+2}$,
    re-arranging terms and applying \eqref{est2} once more gives
    \begin{equation}
        \|dv_1\|_{0; \delta +1} \leq c \varepsilon^{\frac{m+5}{2}  } \text{ for } t <  \frac{1}{2m}. \label{est3}
    \end{equation}
    This shows in particular that $\lVert dv_1\rVert_{0;\delta+1}$ satisfies the zeroth order Sobolev condition in $\mathscr{K}(\varepsilon,t)$. \\
    
    \noindent
    \textbf{Step 2:} \emph{H\"older estimates for $dv_1$ on $L_{\frac{1}{4}}^\varepsilon$}. \\
    
    We start with a $C^0$ bound on $v_1$. Let $\phi: L^{\varepsilon} \rightarrow [0,1]$ be a smooth radial cut-off function on $L^\varepsilon$ which is equal to 1 on $L_{\frac{1}{4}}^\varepsilon$ and is equal to zero on $L^\varepsilon \setminus L_{\frac{1}{8}}^\varepsilon$. 
    Proposition \ref{c0} applied to $\phi v_1$ with $p > \frac{m}{2}$ gives
    \begin{align}
        \|v_1\|_{C^0(L_{\frac{1}{4}}^\varepsilon)} \leq c\big(\|f(u)\|_{L^p(L_{\frac{1}{8}}^\varepsilon)} + \|\Delta \mathcal{B}(u,\psi)\|_{C^0(L_{\frac{1}{8}}^\varepsilon)} + \|v_1\|_{L^p([\frac{1}{8},\frac{1}{4}])} + \|dv_1\|_{L^p([\frac{1}{8},\frac{1}{4}])} \big). \label{est5}
    \end{align}
We estimate the terms on the right-hand side individually. 

By the argument for \eqref{est2}, the second term is bounded by $c\varepsilon^{\frac{m+5}{2}}$. Recall that $f(u)=-\Theta_0-R(u)$.
    Taking $p = \frac{m}{2} + \tau$ ( where $\tau>0$ is chosen small depending on $m$) gives 
    \begin{equation*}
        \|\Theta_{0}\|_{L^p(L_{\frac{1}{8}}^\varepsilon)} \leq c \varepsilon^{5 - \frac{2}{m} - O(\tau)} \leq c\varepsilon^{5 - \frac{3}{m}}.
    \end{equation*}
   To estimate the $R(u)$ term, we use the definition of $\mathscr{K}(\varepsilon,t)$ and Proposition \ref{prop: pointerror}(a):
    \begin{align}
        \|R(u)\|_{L^p(L_{\frac{1}{8}}^\varepsilon)} &\leq c\Big(\int_{L_{\frac{1}{8}}^\varepsilon}|\nabla du|^{2p} d\vol_{L^\varepsilon}\Big)^{\frac{1}{p}}+c \Big(\int_{L_{\frac{1}{8}}^\varepsilon}| du|^{2p} d\vol_{L^\varepsilon}\Big)^{\frac{1}{p}}\nonumber  \\
        &\leq c\|\nabla du\|_{C^0(L_{\frac{1}{8}}^\varepsilon)}^{2- \frac{4}{m + 2\tau}} \|\nabla du\|_{L^2(L_{\frac{1}{8}}^\varepsilon)}^{\frac{4}{m+2\tau}}+c \|du\|_{C^0(L_{\frac{1}{8}}^\varepsilon)}^{2-\frac{4}{m+2\tau}}\lVert du\rVert_{L^2(L_{\frac{1}{8}}^\varepsilon)}^\frac{4}{m+2\tau}\nonumber \\
        &\leq c \varepsilon^{(2 - t)(2- \frac{4}{m + 2\tau}) + (\frac{m+2}{2} - t)\frac{4}{m+2\tau}}+c\varepsilon^{(2-t)(2-\frac{4}{m+2\tau})+(\frac{m+2}{2}-t)\frac{4}{m+2\tau}} \nonumber \\
        &\leq c \varepsilon^{4-2t + \frac{2m-4}{m+2\tau}} \nonumber \\ 
        &\leq c\varepsilon^{6-\frac{4}{m} - O(t) - O(\tau)} \nonumber \\ 
        &\leq c \varepsilon^{5 - \frac{3}{m}} \label{est011}
    \end{align}
    for $t$ and $\tau$ small depending on $m$. Thus $\|f(u)\|_{L^p(L_{\frac{1}{8}}^\varepsilon)} \leq c \varepsilon^{5 - \frac{3}{m}}$.\\
    Next, we estimate the last term on the right-hand side of \eqref{est5}. From \eqref{est3}, if $m\geq 4$ we obtain
    \begin{equation}
        \|dv_1\|_{L^p([\frac{1}{8},\frac{1}{4}])} \leq \|dv_1\|_{C^0([\frac{1}{8},\frac{1}{4}])}^{1 - \frac{4}{m+2\tau}} \|dv_1\|_{L^2([\frac{1}{8},\frac{1}{4}])}^{\frac{4}{m+2\tau}} \leq c\|dv_1\|_{C^0([\frac{1}{8},\frac{1}{4}])}^{1 - \frac{4}{m+2\tau}}\varepsilon^{(\frac{m+5}{2} - 2t)\frac{4}{m+2\tau}}.
        \label{est7}
    \end{equation}
    Thus, we need to obtain a local $C^0$ estimate for $dv_1$. In doing so, we will also prove the desired estimate for the third term in \eqref{est5}. 
    
    Proposition \ref{prop: localest}(a) with $p = m$ gives
    \[\|v_1\|_{C^0([\frac{1}{16}, \frac{1}{3}])} \leq c\big(\|v_1\|_{L^2([\frac{1}{32}, \frac{2}{3}])} + \|f(u)-\Delta\mathcal{B}(u,\psi)\|_{L^m([\frac{1}{32}, \frac{2}{3}])}\big).\]
    Then \eqref{est2} and the same argument for \eqref{est011} with $p = m$ together imply
    \begin{equation}
        \|v_1\|_{C^0([\frac{1}{16}, \frac{1}{3}])} \leq c\varepsilon^{5-\frac{2}{m} - 2t}. \label{est05}
    \end{equation}
    Applying the local estimate Proposition \ref{prop: localest}(b) to $dv_1$ and Proposition \ref{prop: pointerror}(b) to $R(u)$, using that $u \in \mathscr{K}(\varepsilon,t)$, then yields
    \begin{equation}
        \|dv_1\|_{C^0([\frac{1}{8}, \frac{1}{4}])} \leq c\varepsilon^{4- \alpha - 2t}.\label{est03}
    \end{equation}
    Plugging \eqref{est03} into \eqref{est7} shows that the fourth term is smaller than $c\varepsilon^4$, while \eqref{est05} implies that the third term is bounded by $c\varepsilon^4$. It follows that for $m\geq 4$ 
    \begin{equation}
        \| v_1 \|_{C^0(L_{\frac{1}{4}}^\varepsilon)} \leq c\varepsilon^{4}. \label{est06}
    \end{equation}
     When $m = 3$ the estimate is simpler. From Proposition \ref{prop: globalschauder}, 
    \begin{align*}
         \| v_1 \|_{C^0(L_{\frac{1}{4}}^\varepsilon)} &\leq c\big(\|f(u)-\Delta \mathcal{B}(u,\psi)\|_{C^1(L_{\frac{1}{8}}^\varepsilon)} + \|v_1\|_{L^2([\frac{1}{8}, \frac{1}{4}])} + \|dv_1\|_{L^2([\frac{1}{8},\frac{1}{4}]}) \big). 
    \end{align*}
    Each of these terms is bounded by prior estimates, so that \eqref{est06} still hold for $m=3$.
    The global Schauder estimate Proposition \ref{prop: globalschauder}(a) applied to $\phi v_1$ gives us
    \begin{equation}
        \|dv_1\|_{C^{1}(L_{\frac{1}{4}}^\varepsilon)} \leq c\big(\varepsilon^{-2}\|v_1\|_{C^0(L_{\frac{1}{8}}^\varepsilon)} +  \|v_1\|_{C^{1}([\frac{1}{8}, \frac{1}{4}])} +\|f(u)-\Delta \mathcal{B}(u,\psi)\|_{C^{0,\alpha}(L_{\frac{1}{8}})}\big). \label{est010}
    \end{equation}
    Due to \eqref{est05} and \eqref{est06}, the first term is bounded by $c\varepsilon^{2}$. For the last term, \eqref{eq: holder-bound-bdry} and \eqref{Fest} imply
    \[\|f(u)-\Delta \mathcal{B}(u,\psi)\|_{C^{0,\alpha}(L_{\frac{1}{8}})} \leq \|\Theta_0\|_{C^{0,\alpha}(L_{\frac{1}{8}})} + \|R(u)\|_{C^{0,\alpha}(L_{\frac{1}{8}})}+\lVert \Theta_0\rVert_{0,\delta}+\lVert R(u)\rVert_{0,\delta}+\| \psi\|_{C^{2,\alpha}(\partial L^\varepsilon)}.\]
    The last term is bounded by $\varepsilon^{\frac{m+5}{2}}$.
    The second term on the right-hand side is estimated by $c\varepsilon^{4-2t-\alpha}$ using Proposition \ref{prop: pointerror}(b) and the estimates in the definition of $\mathscr{K}(\varepsilon,t)$, and the first term is estimated by $c\varepsilon^{2}$ by the $C^1$ bound on $\Theta_0$ in \eqref{bridgeest1}.
    The third and fourth terms are bounded by $c\varepsilon^{\frac{m+5}{2}}$ by \eqref{bridgeest1} and \eqref{est1}. 
    Finally, the term $\lVert v_1\rVert_{C^{1,\alpha}([\frac{1}{8}, \frac{1}{4}])} $ in \eqref{est010} is controlled by $c\varepsilon^{2}$ because of \eqref{est05}, \eqref{est03}, and Proposition \ref{prop: localest}(c) (where we use an estimate on $\lVert f(u)\rVert_{C^{0,\alpha}([\frac{1}{4},\frac{1}{2}])}$ as in the previous term). Combining each of the aforementioned cases, we conclude
    \begin{equation}
        \|dv_1\|_{C^{1}(L_{\frac{1}{4}}^\varepsilon)} \leq c \varepsilon^{2 } \label{1alpha}
    \end{equation}
    for $t$ small depending on $m$.
    Using Proposition \ref{prop: globalschauder}(b), we can estimate the $C^{1,\alpha}$ norm by 
    \begin{equation}\label{eq: upper-setimate-dv1}
        \| dv_1\|_{C^{1,\alpha}(L_{\frac{1}{4}}^\varepsilon)} \leq c\varepsilon^{2-\alpha}
    \end{equation}
    following the same argument as in the $C^1$ case. Thus, the target H\"older estimates for $dv_1$ on $L_{\frac{1}{4}}^\varepsilon$ are satisfied. It remains to prove the H\"older estimates for $dv_1$ at the cone tips, and the global Sobolev bound for $\nabla dv_1$. \\

    \noindent
    \textbf{Step 3:} \emph{H\"older estimates near the cone tips}. \\

    To estimate the H\"older norms of $dv_1$ close to the cone tips, we make use of Theorem \ref{thm: CHS-linear} with the truncated cone $C^1_\frac{1}{2}$, $C^2_\frac{1}{2}$ in place of $C_1$. In this setting, Theorem \ref{thm: CHS-linear} yields the estimate
    \begin{align*}
        \tnorm {v_1}_{2,\alpha;\nu}\leq c(\tnorm{R(u)}_{0,\alpha;\nu-2}+\lVert \Pi_\nu v_1\rVert_{C^{2,\alpha}(\partial L^\varepsilon_\frac{1}{2})}),
    \end{align*}
    as we may assume that $\mathcal{B}(u,\psi)$ is supported in $L^\varepsilon_\frac{3}{4}$. Here, $\Pi_\nu$ denotes the projection onto any component of $\partial(L^\varepsilon \setminus  L^\varepsilon_\frac{1}{2})$ (thought of as the link $\Sigma^i_\frac{1}{2}$ of a truncated cone $C^i_\frac{1}{2}$), onto the $L^2$-orthogonal complement of the space spanned by eigenfunctions of the Laplacian on $\Sigma_\frac{1}{4}^i$, whose corresponding positive indicial root (defined in \eqref{eq: pol-for-gamma}) is smaller than $\nu$.

    Now, by Proposition \ref{prop: pointerror}, for any $\sigma\in (0,\frac{1}{2}]$ and each $i=1,2$, we have
    \begin{align*}
        [R(u)]^i_{0,\alpha;\nu-2;\sigma}\leq c \sigma^{-2+\nu}[du]^i_{1;0;\sigma}[du]^i_{1,\alpha;0;\sigma}\leq c\varepsilon^{4-\alpha-2t}.
    \end{align*}
    On the other hand, arguing just as in \eqref{eq: holder-bound-bdry} we have that
    \begin{align*}
        \lVert \Pi_\nu v_1\rVert_{C^{2,\alpha}(\partial(L^\varepsilon \setminus  L^\varepsilon_\frac{1}{2}))}\leq c\lVert v_1\rVert_{C^{2,\alpha}(\partial(L^\varepsilon \setminus  L^\varepsilon_\frac{1}{2}))}\leq c\varepsilon^{2-\alpha},
    \end{align*}
    where the second inequality follows from \eqref{est06} and \eqref{eq: upper-setimate-dv1}. We therefore have 
    \begin{align*}
        \tnorm{v_1}_{2,\alpha;\nu}\leq c \varepsilon^{2-\alpha}.
    \end{align*}
    Applying Theorem \ref{thm: CHS-linear} with $\beta:=\frac{t}{2}$ in place of $\alpha$ and using that $\lVert v_1\rVert_{C^{2,\beta}(\partial(L^\varepsilon \setminus L^\varepsilon_\frac{1}{2}))}\leq c\varepsilon^{2-\beta}$ by interpolation of \eqref{1alpha} and \eqref{eq: upper-setimate-dv1}, we also get
    \begin{align*}
        \tnorm{v_1}_{2;\nu}\leq c \varepsilon^{2-\frac{t}{2}}.
    \end{align*}
    Therefore, $v_1$ satisfies the H\"older estimates in the definition of $\mathscr{K}(\varepsilon,t)$.\\

    \noindent
    \textbf{Step 4:} \emph{Global Sobolev estimate for $\nabla d v_1$}.\\

    The global Sobolev estimate for $\nabla dv_1$ follows \cite[Lemma 4.3]{Sm93}. For any $s > 0$, let $\Omega_s$ be the $s$-neighborhood of $\mathscr{B}_\varepsilon = \cup \mathscr{B}_{\varepsilon}^l$ in $L^\varepsilon$:
\[\Omega_s := \{x \in L^\varepsilon: d(x,\mathscr{B}_\varepsilon) < s\}.\]
Let $\phi$ be a cut-off function satisfying $\phi = 1$ on $L^\varepsilon \setminus \Omega_{20\varepsilon}$, $\phi = 0$ on $\Omega_{10\varepsilon}$, and $|\nabla^k\phi|\leq c\varepsilon^{-k}$ for $k = 1,2$.
Recall that
\begin{align*}
    \Delta: \rho^{\delta+2} H^2\cap H^1_0(C^1\cup C^2)\to \rho^{\delta} L^2(C^1\cup C^2)
\end{align*}
is Fredholm by \cite[Theorem 2.1]{Sm93}, and its kernel is empty since $\nu>\nu_0$. Therefore we have the estimate
\begin{align*}
    \|v_1\|_{2;\delta+2; L^\varepsilon \setminus \Omega_{20\varepsilon}} &\leq c\|\Delta(\phi v_1)\|_{0;\delta } \\
    &\leq c \big(\|f(u)-\Delta\mathcal{B}(u,\psi)\|_{0;\delta} + \varepsilon^{\frac{m}{2}-1}\|dv_1\|_{C^0(L_{\frac{1}{4}}^\varepsilon)} + \varepsilon^{\frac{m}{2}-2}\|v_1\|_{C^0(L_{\frac{1}{4}}^\varepsilon)}\big).
\end{align*}
The first term is bounded by $c \varepsilon^{\frac{m+5}{2}}$ due to \eqref{est1}, \eqref{bridgeest1}, and \eqref{eq: holder-bound-bdry}. The second and third terms are bounded by $c\varepsilon^{\frac{m+2}{2}}$ and $c\varepsilon^{\frac{m+4}{2}}$, respectively, due to \eqref{1alpha} and \eqref{est06}. Thus,
\begin{equation}
     \|v_1\|_{2;\delta+2; L^\varepsilon \setminus \Omega_{20\varepsilon}} \leq c \varepsilon^{\frac{m+2}{2}}. \label{globalsobolev1}
\end{equation}
In addition, \eqref{1alpha} implies 
\begin{equation}
\lVert dv_1\rVert_{L^2(\Omega_{20\varepsilon})}+\lVert \nabla dv_1\rVert_{L^2(\Omega_{20\varepsilon})}\leq  c\varepsilon^2\vol(\Omega_{20\varepsilon})^{\frac{1}{2}} \leq c\varepsilon^{\frac{m+3}{2}}. \label{globalsobolev2}
\end{equation}
Together, \eqref{globalsobolev1}, \eqref{globalsobolev2}, and \eqref{est3} imply
\begin{equation}
    \|dv_1\|_{1;\delta+1} \leq c \varepsilon^{\frac{m+2}{2}}. \label{globalsobolev}
\end{equation}
Combining \eqref{globalsobolev} with the estimates in Step 1--Step 3, we see that choosing $t$ and $\tau$ small depending on $m$, and then choosing $\varepsilon$ sufficiently small, we can ensure that if $u\in \mathscr{K}(\varepsilon,t)$, then $v = v_1 + v_2 \in \mathscr{K}(\varepsilon,t)$.\\

\noindent
    \textbf{Step 5:} \emph{Applying the Schauder Fixed Point Theorem.}\\

Let $\alpha'\in (0,\alpha)$ and let $\nu'\in (\nu_0,\nu)$.
By \cite[Theorem 2.3]{Sm93} and Proposition \ref{prop: contraction-estimate}, $\mathcal{P}_\psi\vert_{\mathscr{K}(\varepsilon, t)}$ is continuous with respect to the $C^{2,\alpha', \nu'}(L^\varepsilon)$ topology. We also observe that $\mathscr{K}(\varepsilon,t)$ is compact as a subset of $C^{2,\alpha', \nu'}(L^\varepsilon)$. In fact, the identity map $\iota: C^{2,\alpha, \nu}(L^\varepsilon)\to C^{2,\alpha', \nu'}(L^\varepsilon)$ is compact; thus, since $\mathscr{K}(\varepsilon, t)$ is a bounded subset of $C^{2,\alpha, \nu}(L^\varepsilon)$, it is precompact in $C^{2,\alpha', \nu'}(L^\varepsilon)$. Moreover, $\mathscr{K}(\varepsilon, t)\subset C^{2,\alpha', \nu'}(L^\varepsilon)$ is closed, as the norms appearing in the definition of $\mathscr{K}(\varepsilon, t)$ are lower semi-continuous with respect to $C^{2,\alpha', \nu'}$-convergence. We also note that $\mathscr{K}(\varepsilon, t)$ is convex, as it is defined by norm inequalities.
By the preceding steps, $\mathcal{P}_\psi$ maps $\mathscr{K}(\varepsilon, t)$, regarded as a compact convex subset of $C^{2,\alpha', \nu'}(L^\varepsilon)$, to itself. Therefore, by the Schauder Fixed Point Theorem, $\mathcal{P}_\psi$ must have a fixed point in $\mathscr{K}(\varepsilon,t)$.
\end{proof}

\begin{remark}
    Note that, as the sequence $\{\varepsilon_k\}_{k\in \mathbb{N}}$ introduced at the beginning of Subsection \ref{ssec: fixed-point-argument2} tends to zero, the parameter
    $\varepsilon>0$ in the proof of Proposition \ref{prop: fixed-point-theorem} can be chosen arbitrarily small. Since the fixed point $u$ produced by the proof satisfies $\tnorm{du}_{1,\alpha;\nu-1}\leq c \varepsilon^{2-\alpha-t}$ (for $t$ depending on $m$), this completes the proof of Theorem \ref{bridgeprinciple}.
    \end{remark}
    \begin{remark}\label{rmk: more-general-examples}
        Instead of two cones joined by $\varepsilon$-bridges, one can consider the following configuration: let $C^1$ and $C^2$ be truncated special Lagrangian cones, both tangent to a Lagrangian plane $T$ along a ray. Let $D$ be a compact smooth $m$-dimensional domain in $T$. Then one can construct $\varepsilon$-bridges joining $C^1$ to $D$ and $C^2$ to $D$ respectively, just as in Section \ref{section: approxsoln}, except that the bridges attach directly to D, without requiring any patching there. Note that the arguments in the proofs of Lemmas 4.1 and 4.2 in \cite{Sm93} remain true in this setting, since the kernel of $\Delta$ on $H_0^1(D)$ is trivial. As $D$ also has the same Lagrangian angle as the cones $C^i$, one can follow verbatim the proof of Proposition \ref{prop: fixed-point-theorem} in this setting. 
        
        A corresponding result can be obtained starting from a list of truncated special Lagrangian cones as in Theorem \ref{thm: main-theorem}.
        Note also that a similar result holds if some of the cones $C^1,...,C^N$ are replaced by compact domains with smooth boundary in smooth special Lagrangian submanifolds (so that they admit special Lagrangian extensions across their boundaries) provided the following condition is satisfied: for two consecutive objects $C^i$, $C^{i+1}$, there exists a plane $T^i$ such that $C^{i}$ and $C^{i+1}$ are tangent to $T^i$ along straight segments $\gamma_i^-\subset C^i$ and $\gamma_{i+1}^+\subset C^{i+1}$, respectively, with each segment meeting the boundary of the corresponding component orthogonally. In fact, under these assumptions, one can repeat the construction of the patching regions in Section \ref{section: approxsoln}, ensuring that the conditions \eqref{bridgeest1} are satisfied.
    \end{remark}
        
    Using Remark \ref{rmk: more-general-examples}, we may provide a simple geometric description of our solutions $L_{du}^\varepsilon$. Indeed, the solution special Lagrangians look like the cones glued together by planar special Lagrangian regions which are adjoined to the cone boundaries smoothly by thin ``almost flat" Lagrangian necks. It would be interesting to know if the neck regions of the solutions can be fattened post gluing to produce more natural geometric and analytic objects.

\subsection{Special Lagrangian graphs}
To conclude, we explain how the bridge construction can be carried out while preserving graphicality, thereby proving Corollary \ref{cor: SLEsolution}. Let $C=C(\Sigma)\subset\mathbb C^m$ be a regular special Lagrangian cone which is a gradient graph over the real factor:
\[
C=\operatorname{graph}(\nabla u_C)
\subset\mathbb R^m\times\mathbb R^m,
\]
where $u_C:\mathbb R^m\to\mathbb R$ is two-homogeneous and smooth away from the origin. Let $C_1=C_1(\Sigma)$ denote its unit truncation, and let
\[
\pi:\mathbb C^m=\mathbb R^m\times\mathbb R^m\longrightarrow\mathbb R^m
\]
be the projection onto the first factor.

Let $p\in \Sigma$ and set $T:=T_pC$.
Note that $T$ is itself a Lagrangian graph over $\mathbb{R}^m$. We first describe the construction for two copies of $C_1$. Let $v\in T$ be a unit vector orthogonal to $p$ and let $a=rv$ for any $r$ sufficiently large so that $\pi(C_1)\cap \pi(a+C_1)=\emptyset$.
Set $C_2:=a+C_1$. Since $a\in T$, $C_2$ is tangent to $T$ (as an affine plane) at $p+a$.

Next, we apply the flattening construction of Subsection \ref{section: approxsoln} near the endpoints $p\in C_1$ and $p+a\in C_2$: extend each cone radially around these points by a sufficiently small amount and modify it in an $O(\varepsilon)$-neighborhood of the rays passing through $p$ and $a+p$ . The resulting extension is Lagrangian and agrees with the plane $T$ along the outer part of its boundary. Note that since $T$ is graphical and the two cones are sufficiently far apart, by choosing $\varepsilon$ small enough we can ensure that the union of the extensions is again graphical over $\mathbb{R}^m$ (here we use the estimates on the patching regions obtained in Section \ref{section: approxsoln}). Note that one can connect the two outer parts of the boundary by a strip in $T$ of width $O(\varepsilon)$ (a ``racetrack"), which can be chosen so that the union $L^\varepsilon$ of the extended cones and the strip is a smooth Lagrangian submanifold away from the cone tips, graphical over a simply connected domain $\Omega_\varepsilon\subset \mathbb R^m$. That is, there is a function $q_\varepsilon: \Omega_\varepsilon\to\mathbb R$ such that $L^\varepsilon=\operatorname{graph}(\nabla q_\varepsilon)$.
The Lagrangian-angle defect of $L^\varepsilon$ is supported entirely in the two patching regions. Consequently, the estimates \eqref{bridge1}, \eqref{bridge2}, and \eqref{bridgeest1} hold as in Subsection \ref{section: approxsoln}, and $L^\varepsilon$ satisfies the hypotheses of Theorem \ref{bridgeprinciple}. Applying the theorem for any admissible choice of boundary datum $\psi$, we obtain that, for $\varepsilon$ in a sequence going to zero, there exists a function $u^\varepsilon$ on $L^\varepsilon$ with the following property: if $Q_\varepsilon(x):=(x,\nabla q_\varepsilon(x))$ and $\iota_\varepsilon$ is defined by
\begin{align*}
    \iota_\varepsilon: \Omega_\varepsilon\to \mathbb C^m, \quad x\mapsto \Psi^\varepsilon(Q^\varepsilon(x), du^\varepsilon(Q^\varepsilon(x))),
\end{align*}
where $\Psi^\varepsilon$ is the Weinstein map, then the image $L_{du^\varepsilon}$ of $\iota_\varepsilon$ is a special Lagrangian submanifold. Set $P^\varepsilon:=\pi\circ \iota_\varepsilon$. We will show that, for $\varepsilon$ sufficiently small, $P_\varepsilon$ is injective and its differential is invertible. This will imply that $L_{du^\varepsilon}$ is graphical over $\mathbb{R}^m$.

Write $P^\varepsilon(x)=x+E^\varepsilon(x)$ and note that by the scaling properties of $\Psi^\varepsilon$, we have the pointwise bounds
\begin{align*}
    \lvert E^\varepsilon\rvert\leq c\lvert du^\varepsilon\rvert,\quad \lvert DE^\varepsilon\rvert\leq c(\lvert du^\varepsilon\rvert+\lvert\nabla du^\varepsilon\rvert)
\end{align*}
for $c$ independent of $\varepsilon$.
Since the fixed point construction gives $\tnorm{du^\varepsilon}_{1;\nu-1}\leq \varepsilon^{2-t}$, we have $\lVert DE^\varepsilon\rVert_{C^0(\Omega_\varepsilon)}\leq c\varepsilon^{2-t}$.
Now for $x,y\in \Omega_\varepsilon$, there exists a smooth curve $\gamma$ in $\Omega_\varepsilon$ such that $L(\gamma)\leq Q\lvert x-y\rvert$, for some $Q$ independent of $\varepsilon$ by the way the patching regions and the connecting strip have been constructed. Thus,
\begin{align*}
    \lvert E^\varepsilon(x)-E^\varepsilon(y)\rvert\leq\int_\gamma\lvert DE^\varepsilon\rvert ds\leq CQ\varepsilon^{2-t}\lvert x-y\rvert.
\end{align*}
Hence,
\begin{align*}
    \lvert P^\varepsilon(x)-P^\varepsilon(y)\rvert\geq (1-CQ\varepsilon^{2-t})\lvert x-y\rvert.
\end{align*}
Choosing $\varepsilon$ to be sufficiently small, we can therefore ensure that $P^\varepsilon$ is injective. Moreover, for $\varepsilon$ sufficiently small $\lVert DE^\varepsilon\rVert_{C^0(\Omega_\varepsilon)}\leq \frac{1}{2}$ so that $DP^\varepsilon=\operatorname{Id}+DE^\varepsilon$ is invertible.

Writing $L_{du^\varepsilon}=\operatorname{graph}(\nabla U ^\varepsilon)$, the weighted decay of $du^\varepsilon$ and the conical equivariance of $\Psi^\varepsilon$ imply that
\[
    \lvert D^j\bigl(U^\varepsilon-u_C\bigr)\rvert
    = O\bigl(r^{\nu-j}\bigr),
    \qquad j=1,2,
\]
near each cone tip. Since $\nu>2$ and $u_C$ is two-homogeneous and smooth
away from the origin (up to subtracting a constant), $U^\varepsilon$ extends across each tip as a
$C^{1,1}$ function. Moreover, $U^\varepsilon$ satisfies the special
Lagrangian equation classically away from the finitely many tips, therefore
$U^\varepsilon$ is a viscosity solution on $P^\varepsilon(\Omega_\varepsilon)$ (see Lemma 4.1 in \cite{BOS26}).
A similar argument holds for the translated cone.
This concludes the argument in the case of two copies of the graphical cones. The argument can be easily adapted to any finite number of copies of the original graphical cone.

\appendix
\section{Technical computations}
For the reader's convenience, we have included a proof of the Conical Weinstein Neighborhood Theorem and a proof of unique solvability for the linear conical problem for $\Delta$ on a truncated special Lagrangian cone $C_1$.
\subsection{Conical Weinstein neighborhood}
\begin{proposition}[Conical Weinstein neighborhood]\label{prop:weinstein-approximate-solutions}
There exist $\eta_0>0$, a neighborhood 
\[
\mathcal{U}_{\eta_0} := \{(x,\xi)\in T^\ast C_{(0,r_0)}:\ |\xi|_{g_{L^\varepsilon}}<\eta_0 \rho(x)\},
\]
an open neighborhood $U\subset \C^m\setminus\{0\}$ of $L^\varepsilon$, conical around the singularities, and a symplectomorphism $\Psi:(\mathcal{U}_{\varepsilon_0},\omega_{\mathrm{can}})\to (U,\omega)$
such that\begin{enumerate}
\item \(\Psi(x,0)=x\) for every \(x\in L^\varepsilon\);

\item On each conical region \(L^\varepsilon_i\), the map \(\Psi\) is conical:
if \(\delta^i_\lambda(p_i+z)=p_i+\lambda z\) denotes the dilation centered at
\(p_i\), and if
\begin{align*}
\widetilde\delta^i_\lambda(x,\xi)
=
\left(
\delta^i_\lambda x,\,
\lambda^2 \bigl(d(\delta^i_\lambda)^{-1}\bigr)^*\xi
\right)
\end{align*}
is the induced dilation on \(T^*L^\varepsilon_i\), then $\Psi\circ \widetilde \delta^i_\lambda
=
\delta^i_\lambda\circ \Psi$
whenever both sides are defined;

\item \(\Psi^*\omega=\omega_{\mathrm{can}}\) on \(\mathcal U_{\eta_0}\).
\end{enumerate}
\end{proposition}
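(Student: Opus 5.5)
The plan is to prove the conical Weinstein neighborhood theorem in the standard Moser way, building the map first on a single model cone and then patching it with the ordinary Weinstein map near the compact part of $L^\varepsilon$.

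\emph{Step 1: an equivariant normal map on each cone.} On each conical region $L^\varepsilon_i$, I would start from the map
\[
\Phi(x,\xi):=x+J\xi^\sharp ,
\]
where $\xi^\sharp$ is the $g_{L^\varepsilon}$-dual vector of $\xi$. Because $L^\varepsilon_i$ is Lagrangian, $J\xi^\sharp$ is normal to $L^\varepsilon_i$. This map commutes with the dilations: $\widetilde\delta^i_\lambda$ multiplies $\xi$ by $\lambda^2$ after pullback by $\delta_\lambda^{-1}$, so $\xi^\sharp$ scales by $\lambda$, and $\Phi\circ\widetilde\delta^i_\lambda=\delta^i_\lambda\circ\Phi$. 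The map is the identity on the zero section. Its differential there is the isomorphism $T_xL\oplus T_x^*L\to \C^m$ given by $(X,\xi)\mapsto X+J\xi^\sharp$, and it pulls $\omega$ back to $\omega_{\mathrm{can}}$ along the zero section.

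\emph{Step 2: embedding on a uniform conical neighborhood.} By equivariance it suffices to check that $\Phi$ is an embedding on $\{|\xi|<\eta_0\rho\}$ over the compact annulus $\{1/2\le\rho\le1\}$. Dilation then carries the statement to all scales. On that annulus, embedding holds for small $\eta_0$ by the inverse function theorem together with the compactness and embeddedness of the link.

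\emph{Step 3: equivariant Moser isotopy.} Set $\omega_0=\omega_{\mathrm{can}}$ and $\omega_1=\Phi^*\omega$, and let $\omega_t=(1-t)\omega_0+t\omega_1$. These forms agree along the zero section, and both scale by $\lambda^2$ under $\widetilde\delta_\lambda$. I would take the primitive
\[
\beta=\int_0^1 \frac{1}{s}\,m_s^*\bigl(\iota_{\partial_s m_s}(\omega_1-\omega_0)\bigr)\,ds
\]
defined through fibrewise scaling $m_s(x,\xi)=(x,s\xi)$. Fibrewise scaling commutes with $\widetilde\delta_\lambda$, so $\beta$ is homogeneous of degree $2$ and vanishes to second order on the zero section. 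The Moser vector field $X_t$, defined by $\iota_{X_t}\omega_t=-\beta$, is then $\widetilde\delta_\lambda$-invariant. Its flow $\phi_t$ is equivariant, fixes the zero section, and exists up to time $1$ on a possibly smaller conical neighborhood; this again reduces to the compact annulus by scaling. The composition $\Psi=\Phi\circ\phi_1$ satisfies properties (1)--(3) on the cones.

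\emph{Step 4: globalize over the bridges.} On the compact smooth part $L^\varepsilon_{1/4}$, which contains the bridges, I would use the classical Weinstein theorem. To glue the conical and compact pieces, I would run a single global Moser argument instead of gluing two separate maps. Choose a global normal map $\Phi$ equal to $x+J\xi^\sharp$ everywhere; on the cones this already is the equivariant one. Then carry out Step 3 globally, using the same fibrewise-scaling primitive. That primitive is canonical, so the resulting vector field is automatically equivariant on the conical regions.

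The main obstacle is uniformity. The flow must exist up to time $1$ on a neighborhood of width $\eta_0\rho$ near the tips. Conical scaling handles this near the tips, while compactness handles it elsewhere. One must also make sure the constants are independent of $\varepsilon$; for this I would use the nesting $L^{\varepsilon_1}\subset L^{\varepsilon_2}$, which lets me build $\Psi$ once on $L^{\varepsilon_0}$ and restrict it.
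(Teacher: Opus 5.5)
Your proposal is correct and follows essentially the same route as the paper. Both arguments use the equivariant normal map $x+J\xi^\sharp$, the linear Moser interpolation between $\omega_{\mathrm{can}}$ and the pullback of $\omega$, the fibrewise-scaling primitive (which is automatically $\lambda^2$-homogeneous), an equivariant Moser flow whose existence up to time $1$ is reduced to annuli (the paper does this with Gronwall on $|\xi|/\rho$), and a single global run giving $\Psi=\Phi\circ\phi_1$.

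Two small points to tidy. First, the primitive should read $\int_0^1 \frac{1}{s}\,m_s^*\bigl(\iota_R(\omega_1-\omega_0)\bigr)\,ds$ with $R$ the fibre Euler field; writing both $\iota_{\partial_s m_s}$ and the factor $\frac{1}{s}$ counts the $1/s$ twice. Second, before defining $X_t$ you should note that every $\omega_t$ is nondegenerate on a possibly smaller conical neighborhood, again by homogeneity and compactness of an annulus.
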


\begin{proof} Using $g_{L^\varepsilon}$ to identify $T^\ast L^\varepsilon$ with $TL^\varepsilon$, define
\begin{align*}
    F(x,\xi)=x+J(\xi^\sharp).
\end{align*}
Choose $\eta>0$ such that $F$ is a diffeomorphism from 
\begin{align*}
    \mathcal{U}_\eta=\{(x,\xi):\lvert \xi\rvert_{g_{L^\varepsilon}}<\eta\rho(x)\}
\end{align*}
onto its image. Around the bridges, this is possible by the tubular neighborhood theorem. To see that this can be achieved also near the cone tips, note that it is enough to verify the statement on annuli around every cone tip, and then use the fact that $F\circ\tilde \delta^i_\lambda=\delta^i_\lambda\circ F.$\\
Set
\begin{align*}
    \omega_0:=\omega_{\mathrm{can}},\quad \omega_1:=F^\ast\omega_{\C^m},\quad \omega_t=(1-t)\omega_0+t\omega_1\text{ for }t\in [0,1],
\end{align*}
where $\omega_{\mathrm{can}}$ denote the canonical symplectic form on $T^\ast L^\varepsilon$.
Note that $\omega_0$ and $\omega_1$ coincide along the zero section of $T^\ast L^\varepsilon$. Shrinking $\eta$ if necessary, we can assume that all $\omega_t$ are symplectic on $\mathcal{U}_\eta$.
Let $\beta=\omega_1-\omega_0$. Then $d\beta=0$ and on the zero section $\beta=0$.
If $s_\tau(x,\xi)=(x,\tau\xi)$ and $R$ is the fiber Euler vector field ($R=\sum_i\xi_i\frac{\partial}{\partial \xi_i}$ in local coordinates), define
\begin{align*}
    \alpha=\int_0^1s_\tau^\ast(\iota_R\beta)\frac{d\tau}{\tau}.
\end{align*}
Then $d\alpha=\beta$. Note that $\alpha$ and $\beta$ are homogeneous in the conical regions. Indeed, $\omega_0$ satisfies
\begin{align*}
    (\widetilde\delta^i_\lambda)^\ast\omega_0=(\widetilde\delta^i_\lambda)^\ast\omega_\mathrm{can}=\lambda^2\omega_\mathrm{can},
\end{align*}
and since \(F\circ\widetilde\delta^i_\lambda
=\delta^i_\lambda\circ F\) and
\((\delta^i_\lambda)^*\omega_{\C^m}=\lambda^2\omega_{\C^m}\), we have  
\[
(\widetilde\delta^i_\lambda)^*\omega_1
=
(F\circ\widetilde\delta^i_\lambda)^*\omega_{\C^m}
=
(\delta^i_\lambda\circ F)^*\omega_{\C^m}
=
F^*((\delta^i_\lambda)^*\omega_{\C^m})
=
\lambda^2\omega_1.
\]
Therefore $(\widetilde\delta^i_\lambda)^\ast\beta=\lambda^2\beta$ on the conical regions.\\
For $\alpha$, note that the maps \(s_\tau\) commute
with \(\widetilde\delta^i_\lambda\), and that the fiber Euler field is invariant under
\(\widetilde\delta^i_\lambda\), i.e.
\((\widetilde\delta^i_\lambda)_*R=R\); consequently
\begin{align*}
    (\widetilde\delta^i_\lambda)^*\alpha
=\int_0^1 s_\tau^*
\left(\iota_R (\widetilde\delta^i_\lambda)^*\beta\right)\frac{d\tau}{\tau}
=\lambda^2\alpha.
\end{align*}
Define the vector field $Y_t$ by
\begin{align*}
    \iota_{Y_t}\omega_t=-\alpha.
\end{align*}
Note that $Y_t$ vanishes on the zero section. The homogeneity of $\omega_t$ and $\alpha$ implies that $(\widetilde\delta^i_\lambda)_*Y_t=Y_t$. Therefore, the flow $\varphi_t$ of $Y_t$ commutes with dilations whenever defined. We claim that by choosing $\eta$ small enough, we can ensure that $\varphi_t$ exist for all $t\in [0,1]$ on $\mathcal{U}_\eta$. On the conical regions, by homogeneity it is enough to prove the statement on an annulus around each cone tip. Define
\begin{align*}
    q(x,\xi):=\frac{\lvert \xi\rvert_{g_{L^\varepsilon}}}{\rho(x)}.
\end{align*}
Since \(Y_t=0\) on the zero
section, we have \(|Y_t|\le cq\). Also \(d(q^2)\) vanishes on the zero section, so \(|d(q^2)|\le cq\).
Thus, for any integral curve \(\gamma(t)\) of \(Y_t\),
\begin{align*}
    \frac{d}{dt}q^2(\gamma(t))
=d(q^2)_{\gamma(t)}(Y_t(\gamma(t)))
\le cq^2(\gamma(t)).
\end{align*}

Gronwall's inequality gives \(q(\gamma(t))\le e^{ct/2}q(\gamma(0))\). Hence, choosing $\eta$ small enough we can ensure that the flow is defined for all $t\in [0,1]$ on the conical regions. Similarly, around the bridges, fix a compact neighborhood of the zero section where $Y_t$ is defined. As $Y_t$ vanishes on the zero section, there exist $c$ with $\lvert Y_t(x,\xi)\rvert\leq c\lvert \xi\rvert$. Thus we can apply Gronwall's inequality to $\lvert \xi\rvert_{g_{L^\varepsilon}}^2$. This implies that, after choosing $\eta$ smaller if necessary, $\varphi_t$ is defined for all $t\in [0,1]$ in $\mathcal{U}_\eta$.\\
Finally,
\[
\frac{d}{dt}\varphi_t^*\omega_t
=
\varphi_t^*(\dot\omega_t+\mathcal L_{Y_t}\omega_t)
=
\varphi_t^*(d\alpha+d(\iota_{Y_t}\omega_t))=0.
\]
Therefore \(\varphi_1^*\omega_1=\omega_0\). Define
\[
\Psi:=F\circ\varphi_1.
\]
Then
\[
\Psi^*\omega_{\mathbb{C}^m}=\varphi_1^*F^*\omega_{\mathbb{C}^m}=\varphi_1^*\omega_1=\omega_0=\omega_\mathrm{can}.
\]
Moreover \(\Psi(x,0)=x\) as $Y_t$ vanishes on the zero section, and on the conical regions \(\Psi\) is equivariant
because both \(F\) and \(\varphi_1\) are. Taking
\(U:=\Psi(\mathcal U_{\eta_0})\) gives the required neighborhood.    
\end{proof}
\subsection{Existence and uniqueness for the linear conical problem}\label{ssec: existence-uniqueness}
Here, we recall a few facts about existence and uniqueness for the Dirichlet problem on cones (and conical manifolds). For a modern exposition of the subject, see \cite[Section 2]{FMM}.

Let $C_1$ be a truncated special Lagrangian cone in $\C^m$ with link $\Sigma$. Let $0=\mu_0\leq\mu_1\leq \mu_2\leq...$ denote the eigenvalues of the non-negative operator
\(-\Delta_\Sigma\), counted with multiplicity, and choose an orthonormal basis
\(\{\phi_j\}_{j \in \mathbb{N}}\) for \(L^2(\Sigma)\) satisfying
\[
-\Delta_\Sigma\phi_j=\mu_j\phi_j.
\]
For any $j\in \mathbb{N}_0$, let $\gamma_j$ and $\gamma_j^-$ be respectively the non-negative and negative roots of
    \begin{align}\label{eq: pol-for-gamma}
        \gamma^2+(m-2)\gamma-\mu_j=0.
    \end{align}
Denote by $\Lambda_\Sigma$ the set of all positive roots $\gamma_j$. Such roots $\gamma_j$ are sometimes called \emph{indicial roots}.

For $\nu\in \mathbb{R}$, let
\begin{align}\label{eq: def-projection-Pi}
    \Pi_\nu: L^2(\Sigma)\to L^2(\Sigma), \qquad u\mapsto u-\sum_{i\in \mathcal{J}_\nu}\langle u,\phi_i\rangle\phi_i,
\end{align}
where $\mathcal{J}_\nu$ is the finite set of integers $i$ such that $\gamma_i<\nu$.
\begin{theorem}[\cite{CHS}, Theorem 1.1 \& Corollary 1.2]\label{thm: CHS-linear}
Fix \(\nu>0\) such that $\nu \notin \Lambda_\Sigma$
and let \(\Pi_\nu\) denote the projection onto the modes with indicial roots
larger than \(\nu\). Let $f\in C^{0,\alpha,\nu-2}(C_1)$ and suppose $\psi\in C^{2,\alpha}(\Sigma)$ with
$\alpha\in (0,1)$.
Then there exists a unique solution \(u\in C^{2,\alpha,\nu}(C_{1})\) of
\begin{align}\label{eq:dirichlet-u}
\begin{cases}
    \Delta u=f & \text{in } C_{1},\\
    \Pi_\nu u=\Pi_\nu\psi & \text{on } \Sigma.
\end{cases}
\end{align}
Moreover, we have the estimate
\[
    \tnorm{u}_{2,\alpha;\nu}
    \leq
    c\left(
       \tnorm{f}_{0,\alpha;\nu-2}
        +
        \|\Pi_\nu\psi\|_{C^{2,\alpha}(\Sigma)}
    \right),
\]
where \(c\) depends only on \(\nu,\alpha\), and the cone. 
\end{theorem}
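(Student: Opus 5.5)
The plan is the classical separation-of-variables argument on the warped product $C_1\simeq(0,1]\times_r\Sigma$, in the spirit of \cite{CHS}. The three steps are: solve mode by mode with ODEs, assemble the modes into $C^0$ and $L^2$ control, and upgrade to the weighted Schauder estimate by rescaling to a fixed annulus.

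\emph{Step 1: uniqueness.} In polar form the Laplacian reads
\[\Delta=\partial_r^2+\tfrac{m-1}{r}\partial_r+r^{-2}\Delta_\Sigma.\]
Expanding $u=\sum_j u_j(r)\phi_j$, each mode satisfies
\[u_j''+\tfrac{m-1}{r}u_j'-\mu_j r^{-2}u_j=f_j.\]
The homogeneous solutions are $r^{\gamma_j}$ and $r^{\gamma_j^-}$, with $\gamma_j^-<0$. If $f=0$ and $u\in C^{2,\alpha,\nu}$, then $u_j=O(r^\nu)$. This kills $r^{\gamma_j^-}$ for every $j$, and it kills $r^{\gamma_j}$ whenever $\gamma_j<\nu$. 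So only modes with $\gamma_j>\nu$ survive, and these are exactly the ones fixed by the condition $\Pi_\nu u=0$ on $\Sigma$. Hence $u=0$.

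\emph{Step 2: existence.} For each $j$ I write the solution by variation of parameters, choosing the integration limits according to the position of $\gamma_j$ relative to $\nu$. If $\gamma_j<\nu$, integrate from $0$:
\[u_j(r)=\frac{1}{\gamma_j-\gamma_j^-}\int_0^r\Big[(r/s)^{\gamma_j}-(r/s)^{\gamma_j^-}\Big]s f_j(s)\,ds.\]
This formula converges and is $O(r^\nu)$ when $f_j=O(s^{\nu-2})$, because $\nu>\gamma_j>\gamma_j^-$. In this case the boundary value $u_j(1)$ is not prescribed, which is consistent with $\Pi_\nu$ discarding this mode.

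If $\gamma_j>\nu$, take the $r^{\gamma_j^-}$ part integrated from $0$ and the $r^{\gamma_j}$ part integrated from $1$. Then add $c_j r^{\gamma_j}$ with $c_j$ chosen so that $u_j(1)=\psi_j$.

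Each mode then satisfies a pointwise bound
\[|u_j(r)|\le C_j\big(\sup_s s^{2-\nu}|f_j(s)|+|\psi_j|\big)r^\nu,\]
with $C_j\lesssim \big(|\gamma_j-\nu|\,|\gamma_j^- -\nu|\big)^{-1}$. Since $\nu\notin\Lambda_\Sigma$ and $\gamma_j\to\infty$, these constants are uniformly bounded, and in fact decay like $\mu_j^{-1}$.

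\emph{Step 3: $L^2$ control per annulus.} By Parseval on each slice $\{r\}\times\Sigma$,
\[\|u\|_{L^2([\sigma,2\sigma])}\le c\,\sigma^{\nu+m/2}\Big(\tnorm{f}_{0;\nu-2}+\|\Pi_\nu\psi\|_{L^2}\Big).\]
There is a subtlety here. The $f_j$ are only controlled in $\ell^2$ sum, so I would bound $\sum_j\sup_r|f_j|^2$ via $\|f\|_{L^\infty}$ on dyadic annuli. The alternative is to use the convolution form of the kernel together with Young's inequality in $\log r$: the kernel $(r/s)^{\gamma}$ decays exponentially in $|\log(r/s)|$ at rate $|\gamma-\nu|$ after weighting by $r^{-\nu}$, and this gives a uniform $L^2$-on-annuli estimate.

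\emph{Step 4: Schauder upgrade by scaling.} Pull back by $\delta_\sigma$ to the annulus $[\tfrac12,4]$. The rescaled function $\sigma^{-\nu}u\circ\delta_\sigma$ solves $\Delta w=\sigma^{2-\nu}f\circ\delta_\sigma$. Interior Schauder estimates (Proposition \ref{prop: localest}) on the fixed annulus bound $[u]_{2,\alpha;\nu;\sigma}$ by the $L^2$ norm from Step 3 plus $\tnorm{f}_{0,\alpha;\nu-2}$. Near $r=1$, boundary Schauder estimates with data $u|_\Sigma=\Pi_\nu\psi+\sum_{\gamma_j<\nu}u_j(1)\phi_j$ apply; the finite sum is smooth and controlled by $f$.

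The constant depends on $\nu$ only through $\operatorname{dist}(\nu,\Lambda_\Sigma)$. I expect the main obstacle to be Step 3: obtaining estimates uniform over infinitely many modes, so that the series converges with $L^2$-on-annuli control. The non-resonance condition $\nu\notin\Lambda_\Sigma$ is exactly what makes this possible.
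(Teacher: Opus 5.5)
Your proposal follows the paper's proof. Both expand in eigenfunctions of $\Delta_\Sigma$ and solve each radial ODE with integration limits chosen according to whether $\gamma_j<\nu$ or $\gamma_j>\nu$; your symmetric variation-of-parameters kernel is the paper's iterated integral after Fubini. Both then prove the slice bound $\|u(r,\cdot)\|_{L^2(\Sigma)}\le c\,r^{\nu}\bigl(\tnorm{f}_{0;\nu-2}+\|\Pi_\nu\psi\|_{L^2(\Sigma)}\bigr)$, upgrade it with rescaled sup and Schauder estimates on dyadic annuli, and prove uniqueness mode by mode. Your handling of the boundary annulus, where $u|_\Sigma=\Pi_\nu\psi+\sum_{\gamma_j<\nu}u_j(1)\phi_j$, is actually more explicit than the paper's.

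\textbf{Correction in Step 3.} Your first option there does not work. The quantity $\sum_j\sup_r|f_j(r)|^2$ is not controlled by $\sup_r\|f(r,\cdot)\|_{L^2(\Sigma)}^2$, because the supremum in $r$ does not commute with the $\ell^2$-sum over modes. For a counterexample, take a flat-torus link, where the eigenfunctions are uniformly bounded, and place $N$ of them on disjoint radial bumps inside one annulus. Then $\|f\|_{L^\infty}$ stays bounded while $\sum_j\sup_r|f_j|^2\approx N$. So the per-mode bounds of Step 2 cannot simply be summed, even though their constants are uniform.

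\textbf{Your second option works}, provided you apply it to the $L^2(\Sigma)$-valued function rather than mode by mode.
\begin{itemize}
\item Write $r^{-\nu}u_j(r)$ as an integral of $s^{2-\nu}f_j(s)$ against $ds/s$.
\item For $\gamma_j>\nu$, dominate all the kernels by a single $j$-independent integrable kernel, using that $(\gamma_j-\gamma_j^-)^{-1}$ is bounded. For example, use $(r/s)^{\gamma_{J+1}-\nu}$ for $s>r$, use $(r/s)^{\gamma_0^- -\nu}$ for $s<r$, and use $s^{\nu-\gamma_0^-}$ for the correction term $c_jr^{\gamma_j}$.
\item Apply Minkowski's integral inequality together with Parseval, $\bigl(\sum_j f_j(s)^2\bigr)^{1/2}=\|f(s,\cdot)\|_{L^2(\Sigma)}\le c\,s^{\nu-2}\tnorm{f}_{0;\nu-2}$. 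This gives the slice bound.
\end{itemize}
This is exactly the paper's argument, where the domination comes from the monotonicity of $(rs/\tau^2)^{2\gamma}$ in $\gamma$. The finitely many modes with $\gamma_j<\nu$ need no uniformity. With Step 3 carried out this way, your proof is complete.
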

\begin{proof}
We follow the exposition in \cite[Section 3]{SN91}.
    Decompose $\psi$ as
    \begin{align*}
        \psi=\sum_{j=0}^\infty\psi_j\phi_j
    \end{align*}
    for constant coefficients $\psi_j$.
    
    If $u\in C^2_{\text{loc}}(C_1)$, then for any $r>0$ we have
    \begin{align*}
        u(r\cdot)=\sum_{j=0}^\infty a_j(r)\phi_j.
    \end{align*}
    Now, if $u\in C^{2,\alpha,\nu}(C_1)$ is a solution of $\Delta u=f$, then for any $j\in \mathbb{N}$ there holds
    \begin{align}\label{eq: ODE-for-aj}
        (r^2\partial_r^2+r(m-1)\partial_r-\mu_j)a_j(r)=r^2f_j(r)
    \end{align}
    where
    \begin{align*}
        f_j(r):=\int_\Sigma f(r,\cdot)\phi_j\text{ for all }j\in \mathbb{N}.
    \end{align*}
    The corresponding homogeneous ODE for $a_j$ has solutions $r^{\gamma_j}$ and $r^{\gamma_j^-}$, where $\gamma_j$ and $\gamma_j^-$ are defined in \eqref{eq: pol-for-gamma}.
    For any $j\in \mathbb{N}$ we set
    \begin{align*}
        u_j(r)=\begin{cases}
            r^{\gamma_j}\int_0^r\tau^{1-m-2\gamma_j}\int_0^\tau s^{m-1+\gamma_j}f_j(s)dsd\tau&\text{ for }j\leq J \\
            \psi_j r^{\gamma_j}+r^{\gamma_j}\int_r^1\tau^{1-m-2\gamma_j}\int_0^\tau s^{m-1+\gamma_j}f_j(s)dsd\tau&\text{ for }j> J,
        \end{cases}
    \end{align*}
    where $J$ is an integer such that $\gamma_j>\nu$ if and only if $j> J$. Note that for any $j\in \mathbb{N}$, $u_j$ solves the ODE \eqref{eq: ODE-for-aj}. Therefore,
    \begin{align}\label{eq: def-of-u-series}
        u:=\sum_{j=0}^\infty u_j\phi_j
    \end{align}
    formally solves \eqref{eq:dirichlet-u}. It will thus be sufficient to show that $u$ in \eqref{eq: def-of-u-series} satisfies the claimed estimate.
    
    We will first show that
    \begin{align}\label{eq: claim-l2-control}
        \lvert u(r\cdot)\rvert_{L^2(\Sigma)}\leq cr^\nu \left(\tnorm{f}_{0;\nu-2}+\lVert\Pi_\nu\psi\rVert_{L^2(\Sigma)}\right).
    \end{align}
    In fact, if $j\leq J$
    \begin{align}\label{eq: first-part-L2}
        \lvert u_j(r)\rvert\leq \left\lvert r^{\gamma_j}\int_0^r\tau^{1-m-2\gamma_j}\int_0^\tau s^{m-1+\gamma_j}s^{\nu-2}ds d\tau\right\rvert\tnorm{f}_{0;\nu-2}\leq c r^\nu\tnorm{f}_{0;\nu-2}. 
    \end{align}
    For $j> J$ we compute
    \begin{align}\label{eq: triangular-for-uj}
        \left(\sum_{j>J}\lvert u_j(r)\rvert^2\right)^\frac{1}{2}\leq \left(\sum_{j>J}\lvert \psi_j\rvert^2r^{2\gamma_j}\right)^\frac{1}{2}+\left(\sum_{j>J}\lvert b_j(r)\rvert^2\right)^\frac{1}{2},
    \end{align}
    where for any $j\in \mathbb{N}$
    \begin{align*}
        b_j(r)=r^{\gamma_j}\int_r^1\tau^{1-m-2\gamma_j}\int_0^\tau s^{m-1+\gamma_j}f_j(s)dsd\tau.
    \end{align*}
    For the first term on the right hand side of \eqref{eq: triangular-for-uj} we have
    \begin{align}\label{eq: boundary-estimate-for-L2}
    \left(\sum_{j>J}\lvert \psi_j\rvert^2r^{2\gamma_j}\right)^\frac{1}{2}\leq r^{\gamma_{J+1}}\lVert \Pi_\nu\psi\rVert_{L^2(\Sigma)}\leq r^\nu\lVert \Pi_\nu\psi\rVert_{L^2(\Sigma)},
    \end{align}
    as $\nu<\gamma_{J+1}$.
    For the second term we estimate we use Minkowski's integral inequality:
    \begin{align*}
        \left(\sum_{j>J}\lvert b_j(r)\rvert^2\right)^\frac{1}{2}\leq \int_r^1\int_0^\tau\left(\sum_{j>J}r^{2\gamma_j}\tau^{2-2m-4\gamma_j}s^{2m-2+2\gamma_j}f_j^2(s)\right)^\frac{1}{2}dsd\tau.
    \end{align*}
One verifies that for the values of $\tau$ and $s$ in the integral above, $r^{2\gamma}\tau^{2-2n-4\gamma}s^{2n-2+2\gamma}=\tau^{2-2n}s^{2n-2}\left(\frac{rs}{\tau^2}\right)^{2\gamma}$ is a non-increasing function of $\gamma$. Therefore, we have
\begin{align}\label{eq: third-part-L2}
        \left(\sum_{j>J}\lvert b_j(r)\rvert^2\right)^\frac{1}{2}\leq& \int_r^1\int_0^\tau r^{\gamma_{J+1}}\tau^{1-m-2\gamma_{J+1}}s^{m-1+\gamma_{J+1}} \left(\sum_{j>J}f_j^2(s)\right)^\frac{1}{2}dsd\tau\\
        \nonumber 
    \leq&r^{\gamma_{J+1}}\tnorm{f}_{0;\nu-2}\int_r^1\tau^{1-m-2\gamma_{J+1}}\int_0^\tau s^{n-1+\gamma_{J+1}+\nu-2}dsd\tau\\
    \nonumber
        \leq&c r^\nu\tnorm{f}_{0;\nu-2}.
    \end{align}
Combining \eqref{eq: first-part-L2},\eqref{eq: boundary-estimate-for-L2} and \eqref{eq: third-part-L2}, we see that \eqref{eq: claim-l2-control} holds.  

Fix $\sigma \in (0, 1/2)$. By the local interior supremum estimates, applied
after rescaling to fixed annuli (see \cite[Sections 9.7 \& 9.9]{GT01} as well as Proposition \ref{prop: localest}(a)), we find that on slightly smaller annuli \((\sigma_1, \sigma_2] \Subset (\sigma, 2\sigma]\)
\[
\|u\|_{C^0((\sigma_1, \sigma_2])}
    \leq
    c\left(
        \sigma^{-m/2}\|u\|_{L^2((\sigma, 2\sigma])}
        +
        \sigma^2\|f\|_{C^0((\sigma,2\sigma])}
    \right)
    \leq
    c \sigma^\nu
    \left(
        \tnorm{f}_{0;\nu-2}
        +
        \|\Pi_\nu\psi\|_{L^2(\Sigma)}
    \right).
\]
Applying the corresponding interior Schauder estimates, again
after rescaling to fixed annuli (see \cite[Sections 6.1 \& 6.2]{GT01} and Proposition \ref{prop: localest}(b)--(c)), we
obtain
\[
    \tnorm{u}_{2,\alpha;\nu}
    \leq
    c\left(
        \tnorm{f}_{0,\alpha;\nu-2}
        +
        \|\Pi_\nu\psi\|_{C^{2,\alpha}(\Sigma)}
    \right).
\]
This is the desired estimate. A similar argument holds for $\sigma = 1/2$ using boundary supremum and Schauder estimates in places of the interior estimates. 

Finally, uniqueness of the solution follows from the fact that --- as observed before --- for any solution $v\in C^{2,\alpha,\nu}(C)$ of \eqref{eq:dirichlet-u} and for any $j\in \mathbb{N}$, $v_j(r):=\int_\Sigma v\phi_j$ must satisfy three conditions simultaneously: (i.) solve the ODE \eqref{eq: ODE-for-aj}, (ii.) satisfy the estimate $\lvert v_j(r)\rvert\leq c r^\nu$, and (iii.) $v_j$ attain the boundary datum (unless $j\leq J$, in which case the unique solution satisfying the decay condition is given by $u_j$).
Now, (i.) implies that $v_j-u_j$ must be a linear combination of $r^{\gamma_j}$ and $r^{\gamma_j^-}$, and from (ii.), (iii.) we deduce that it must be equal to zero. But the only solutions of \eqref{eq: ODE-for-aj} different from $u_j$ must have a term involving $r^{\gamma_j^-}$, and therefore are not admissible.
\end{proof}

\begin{remark}\label{rmk: CHS}
    Theorem \ref{thm: CHS-linear} holds for $\Delta$ on $L^\varepsilon$ also, provided $\Pi_\nu$ is defined by \eqref{eq: epsilon-projection}. However, in this case we must assume $\nu \notin \cup_{i=1}^N \Lambda_{\Sigma^i} $. In fact, an analogous theorem holds for a broad class of submanifolds with isolated singularities that includes the conically singular submanifolds, as well for a large class of operators including the ``conic operators" --- for which $\Delta_{g_C}$ is the canonical example \cite[see Theorem 2.3]{Sm93}.
\end{remark}
\begin{coi*}
    The authors confirm that they have no conflict of interest regarding this work.
\end{coi*}
\printbibliography
\end{document}